\documentclass[11pt,a4paper]{amsart}
\usepackage{hyperref}
\usepackage[textformat=period]{caption}
\bibliographystyle{plain}
\usepackage[english,british]{babel}
\usepackage[utf8]{inputenc}

\usepackage{amsmath,amsthm,amssymb,amsfonts}
\usepackage{dsfont}
\usepackage{textcomp}
\usepackage{graphicx}
\usepackage{extarrows}
\usepackage{epigraph}
\usepackage{graphicx}
\usepackage{subfigure}
\usepackage{sidecap}
\usepackage{indentfirst}
\usepackage{tikz}
\usepackage{pgfplots}
\usetikzlibrary{arrows,matrix,patterns,decorations.markings}
\usepackage{caption}
\usepackage{float}
\usepackage{color}

\raggedbottom
\tolerance=3000
\hbadness=10000
\hfuzz=1.5pt

\setlength{\oddsidemargin}{0.25in}
\setlength{\evensidemargin}{0.25in}
\setlength{\topmargin}{-0.25in}
\setlength{\textwidth}{6.0in}
\setlength{\textheight}{9.0in}

\newcommand{\Z}{\mathds{Z}}
\newcommand{\Q}{\mathds{Q}}
\newcommand{\OO}{\mathds{O}}
\newcommand{\dd}{\mathrm{d}}

\newcommand{\Hl}{H_{\text{Lee}}}

\newcommand{\ssum}{\displaystyle\sum}

\renewcommand{\geq}{\geqslant}
\renewcommand{\leq}{\leqslant} 

\newtheorem{teo}{Theorem}[section]
\newtheorem*{teo*}{Theorem}
\newtheorem{lemma}[teo]{Lemma}
\newtheorem{prop}[teo]{Proposition}
\newtheorem*{prop*}{Proposition}
\newtheorem{defin}{Definition}

\newtheorem{cor}[teo]{Corollary}

\newcommand{\osplit}[2]{\begin{scope}[shift={(#1+2,#2+2)}]
\draw (-2,-2) .. controls +(1,1) and +(-1,1) ..  (2,-2);
\draw (-2,+2) .. controls +(1,-1) and +(-1,-1) ..  (2,+2);
\end{scope}}

\newcommand{\isplit}[2]{\begin{scope}[shift={(#1+2,#2+2)}]
\draw (-2,-2) .. controls +(1,1) and +(1,-1) ..  (-2,+2);
\draw  (2,-2).. controls +(-1,1) and +(-1,-1) ..  (2,+2);
\end{scope}}

\newcommand{\ocross}[2]{\begin{scope}[shift={(#1+2,#2+2)}]
\draw (+2,-2) -- (-2,+2);
\pgfsetlinewidth{8*\pgflinewidth}
\draw[white] (-2,-2) -- (+2,+2);
\pgfsetlinewidth{.125*\pgflinewidth}
\draw (-2,-2) -- (+2,+2);
\end{scope}}


\newcommand{\ocrosstext}{\raisebox{-0.1cm}{\begin{tikzpicture}[scale=.07]
\ocross{0}{0}
\draw[dashed] (2,2) circle (2.8cm);
\end{tikzpicture}}\:}

\newcommand{\smootho}{\raisebox{-0.1cm}{\begin{tikzpicture}[scale=.07]
\osplit{0}{0}
\draw[dashed] (2,2) circle (2.8cm);
\end{tikzpicture}}\:}
\newcommand{\smoothv}{\raisebox{-0.1cm}{\begin{tikzpicture}[scale=.07]
\isplit{0}{0}
\draw[dashed] (2,2) circle (2.8cm);
\end{tikzpicture}}\:}

\begin{document}
\title{On the slice genus and some concordance invariants of links}
\author{Alberto Cavallo}

\begin{abstract}
 We introduce a new class of links for which we give a lower bound for the slice genus $g_*$, using the generalized 
 Rasmussen invariant. We show that this bound, in some cases, allows one to compute $g_*$ exactly; in particular, we 
 compute $g_*$ for torus links. 
 We also study another link invariant: the strong slice genus $g_*^*$. Studying the behaviour of a specific type of
 cobordisms in Lee homology, a lower bound for $g_*^*$ is also given. 
\end{abstract}

\date{\today}
\address{Department of Mathematics and its Applications, CEU, Budapest, 1051, Hungary}
\email{Cavallo\_Alberto@ceu-budapest.edu}

\maketitle

\section{Introduction}
\label{section:intro}
In the last ten years, after the publication of the paper ``Khovanov homology and the slice genus'' \cite{Rasmussen} by 
Jacob Rasmussen, based on Eun Soo Lee's work (\cite{Lee}), more concordance invariants of links have been studied: 
in \cite{Beliakova}, Beliakova and Wehrli introduced a natural generalization to oriented links of the Rasmussen invariant $s(L)$ and, more recently, 
John Pardon introduced a new invariant called $d_L$. 

After a review on link cobordisms and Lee homology (Section \ref{section:cobordisms}),
in Section \ref{section:three} we use $d_L$ to define a new class of links, that we call
\emph{pseudo-thin links}: a very large class that contains both knots and Khovanov H-thin links. We prove 
more properties of $s(L)$ than the ones given in \cite{Beliakova} for this class of links.
Furthermore, we show that for pseudo-thin links, the inequalities due to Andrew Lobb (\cite{Lobb}) can be improved in the following way:
\begin{equation}
\label{inf_s}
s(D,{\bf o})\geq 2-2r+V(D,{\bf o})
\end{equation} 
where $r$ is the number of split component of the link represented by $(D,{\bf o})$
and $V(D,{\bf o})$ is an integer obtained from the diagram as in \cite{Lobb}.

In Section \ref{section:four}, we compute, for some families of links, the slice genus $g_*(L)$, 
namely the minimum genus of a compact connected orientable surface 
$S$ embedded in the 4-ball $D^4$ with $\partial S=L$.  
\begin{figure}[H]
  \centering
  \includegraphics[width=0.38\textwidth]{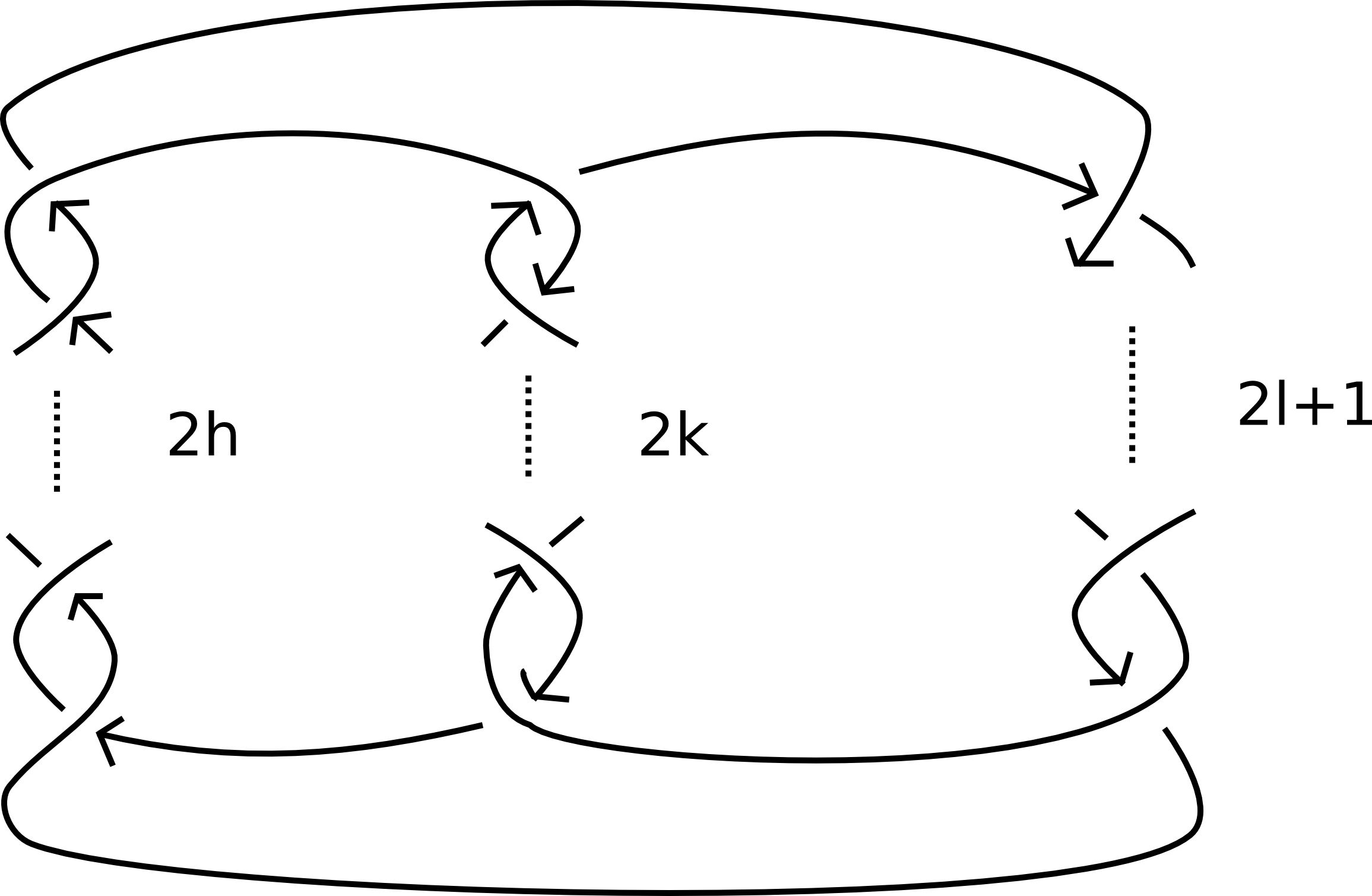}   
  \caption{A $P_{2h,-2k,2l+1}$ pretzel link: the number of crossings of each strand is indicated}
  \label{Pretzel}
\end{figure}
With our first result we verify a generalization to links of the classical Milnor conjecture on torus knots.
\begin{prop}
 \label{prop:Milnor}
 The slice genus of a $n=gcd(p,q)$ component torus link $T_{p,q}$ is given by the following equation:
 \begin{equation}
  \label{Milnor}
  \dfrac{(p-1)(q-1)+1-n}{2}
 \end{equation}
\end{prop}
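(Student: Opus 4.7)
The plan is to establish matching upper and lower bounds for $g_*(T_{p,q})$.

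\emph{Upper bound.} I would realize $T_{p,q}$ as the closure of the positive braid $(\sigma_1\sigma_2\cdots\sigma_{p-1})^q$ on $p$ strands and apply Seifert's algorithm to the resulting diagram. The outcome is a connected Seifert surface $\Sigma$ built from $p$ disks joined by $(p-1)q$ positively twisted bands, with Euler characteristic $p-(p-1)q$ and $n$ boundary components, so
\begin{equation*}
g(\Sigma) \;=\; \frac{2 - n - \chi(\Sigma)}{2} \;=\; \frac{(p-1)(q-1) + 1 - n}{2}.
\end{equation*}
Pushing the interior of $\Sigma$ slightly into $D^4$ produces a smoothly embedded connected oriented surface with boundary $T_{p,q}$ of this genus, so the desired upper bound on $g_*(T_{p,q})$ follows immediately.

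\emph{Lower bound.} Here I would invoke the improved inequality (\ref{inf_s}) that is available for pseudo-thin links. The first task is to verify that torus links belong to the pseudo-thin class introduced in Section~\ref{section:three}; this is the genuine input drawn from the rest of the paper. Taking $D$ to be the standard positive braid diagram above with its canonical orientation $\mathbf{o}$, the link is non-split so $r=1$, and since every crossing of $D$ is positive the Lobb quantity $V(D,\mathbf{o})$ reduces to $w(D) - s_o(D) + 1 = (p-1)q - p + 1 = (p-1)(q-1)$. Inequality (\ref{inf_s}) then forces
\begin{equation*}
s(T_{p,q}) \;\geq\; (p-1)(q-1).
\end{equation*}
Combining this with the slice-genus bound for the generalised Rasmussen invariant, in the normalisation in which $s$ of the $n$-component unlink equals $1-n$ and one has $2g_*(L) \geq s(L) + 1 - n$, yields $g_*(T_{p,q}) \geq \frac{(p-1)(q-1)+1-n}{2}$, matching the upper bound.

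\emph{Main obstacle.} The decisive step is confirming that torus links are pseudo-thin, since it is exactly the improvement in (\ref{inf_s}) that recovers the $(n-1)$-correction lost by the generic Lobb inequality for $n>1$. By contrast, the Seifert-surface construction and the evaluation of $V(D,\mathbf{o})$ from the positive braid diagram are essentially routine once that hurdle is cleared, and the computation of $s$ itself is then pinned down because (\ref{inf_s}) and the slice-genus inequality force equality throughout.
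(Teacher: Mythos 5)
Your proof is correct, and the upper bound is exactly the paper's (Seifert's algorithm on the standard positive diagram, pushed into $D^4$). For the lower bound, however, you take a detour that the paper does not need. The paper never invokes pseudo-thinness or the improved Lobb inequality \eqref{inf_s} here: since the standard diagram is positive, Proposition \ref{prop:positive} gives the \emph{exact} value $s(T_{p,q})=-k(D)+c(D)+1=(p-1)(q-1)$ directly (the canonical classes live in homological degree $0$ and $\mathrm{Im}\,\dd_{\text{Lee}}^{-1}=\{0\}$ pins down their filtration degrees), after which the ordinary bound \eqref{bound_inf_s} finishes the argument. Your route instead derives only the inequality $s(T_{p,q})\geq V(D,\mathbf{o})=(p-1)(q-1)$ from Theorem \ref{teo:lower_bound}, which requires first checking that torus links are pseudo-thin — the step you flag as the ``main obstacle.'' That check is available (positive links satisfy the second condition of Proposition \ref{prop:conditions}, and the paper records this right after the torus-link computation), so your argument closes, but the obstacle is self-imposed: positivity alone already determines $s$ exactly, and indeed for a connected positive diagram $V(D,\mathbf{o})=-k+w+2-1=-k+c+1$ coincides with the value from Proposition \ref{prop:positive}. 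The one thing your framing buys is a reminder that \eqref{inf_s} recovers the same sharp constant; the one thing it costs is an unnecessary hypothesis in the proof of the lower bound.
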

Then, we consider the 3-strand pretzel links given in Figure \ref{Pretzel}.
Note that the second parameter is negative because the second strand has negative twists, while we can always suppose $l\geq 0$ because
$P_{-2h,-2k,-2l-1}$ is the mirror image of $P_{2h,2k,2l+1}$ for every $(h,k,l)$.

Under the hypothesis that $h<0$ and $k<0$ we prove the following proposition, using Equation \eqref{inf_s} and 
Proposition~\ref{prop:Milnor}. Later, we will show that more can be said on pretzel slice genus.
\begin{prop}   
 \label{prop:boh}
       If $l+h\geq 0$ then $$g_*(P_{2h,2k,2l+1})=l+h$$ while if $l+h=-1$ then $g_*(P_{2h,2k,2l+1})=0$.
       Changing the relative orientation, if $l+k\geq 0$, we have 
       $$g_*(P_{2h,2k,2l+1})=l+k$$ and $l+k=-1$ implies $g_*(P_{2h,2k,2l+1})=0$.     
\end{prop}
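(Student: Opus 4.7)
The plan is to prove matching upper and lower bounds on $g_*(P_{2h,2k,2l+1})$. The upper bound will come from an explicit oriented cobordism reducing the pretzel to a torus link (or unknot/unlink), to which Proposition~\ref{prop:Milnor} applies; the lower bound will come from \eqref{inf_s}. I treat only the first case $l+h\geq 0$ (resp.\ $l+h=-1$); the second follows from the first by swapping the roles of strands~1 and~2, which is what the change of relative orientation amounts to.

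For the \emph{upper bound} I would construct a connected oriented surface in $D^4$ of genus $l+h$ bounded by $P_{2h,2k,2l+1}$. Concretely, on the standard pretzel diagram one applies oriented saddle moves that cancel one negative crossing on strand~$1$ against one positive crossing on strand~$3$: after $|h|$ such cancellations, the link is isotopic to a torus link whose parameters are explicit and whose slice genus is provided by Proposition~\ref{prop:Milnor}. Capping off the torus link with a realizing slice surface and gluing it to the saddle cobordism produces, after a count of Euler characteristics and components, a connected surface of the predicted genus $l+h$. In the boundary case $l+h=-1$ the same construction terminates at a 2-component unlink, which bounds a pair of disjoint disks, so $g_*=0$.

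For the \emph{lower bound} I would apply \eqref{inf_s} to the standard pretzel diagram $D$ with the chosen orientation $\mathbf{o}$. This is legitimate provided $P_{2h,2k,2l+1}$ is pseudo-thin in the sense of Section~\ref{section:three}, which I expect to hold because these 3-strand pretzels are quasi-alternating, hence Khovanov H-thin, hence pseudo-thin. A direct diagrammatic computation of the split-component count $r=1$ and of the integer $V(D,\mathbf{o})$ out of the oriented resolution of the pretzel diagram, plugged into \eqref{inf_s}, yields an inequality of the form $s(P_{2h,2k,2l+1},\mathbf{o}) \geq 2(l+h)+1$. Combining this with the Beliakova--Wehrli slice-Bennequin inequality $s(L) \leq 2g_*(L)+n-1$ for the 2-component link $L$ delivers the matching bound $g_*(P_{2h,2k,2l+1})\geq l+h$.

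The main obstacle is twofold: carrying out the diagrammatic computation of $V(D,\mathbf{o})$ for the pretzel, and, in parallel on the upper-bound side, the bookkeeping of components and saddles so that the cobordism and the torus-link capping surface assemble into a single \emph{connected} surface of the predicted genus. A subtler point is the boundary case $l+h=-1$, where one has to verify that the saddles produce a genuine split unlink rather than some more complicated null-concordant 2-component link; this is presumably also where the hypothesis $k<0$ (and not just $h<0$) is actually used, by ensuring that strand~$2$ does not interfere with the saddle cobordism performed between strands $1$ and $3$.
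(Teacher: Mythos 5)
Your skeleton (upper bound from an explicit cobordism to a torus link plus Proposition \ref{prop:Milnor}; lower bound from \eqref{inf_s} combined with $g_*\geq\frac{|s|+1-n}{2}$) is the same as the paper's, but the two steps you defer as ``obstacles'' are exactly where the content lies, and as described both would fail.

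First, pseudo-thinness. You justify it by asserting these pretzels are quasi-alternating, hence H-thin. That is false in general: the introduction explicitly notes these links are in general \emph{not} H-thin (and indeed, by the known classification of quasi-alternating pretzels, $P_{2h,2k,2l+1}$ with $h,k<0$ fails to be quasi-alternating for many parameters, e.g.\ when $2l+1\leq\min(2|h|,2|k|)$). The correct route is the \emph{second} criterion of Proposition \ref{prop:conditions}: the link has two components, so its only nontrivial relative orientation has $\text{lk}({\bf o'},{\bf o})=\text{lk}(L_1,L_2)=h-k\neq 0$, and the link is non-split because its determinant is nonzero. (This is why the paper's case analysis carries the hypothesis $h\neq k$.) Without pseudo-thinness you only have Lobb's original bound $s\geq 2-2n+V$, which loses exactly the $2$ you need and yields only $g_*\geq l+h-1$.

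Second, the upper bound. You propose $|h|$ saddles ``cancelling'' negative crossings of strand 1 against positive crossings of strand 3. A single band move cannot cancel two crossings lying in different twist regions, and in any case the Euler-characteristic count does not close: $|h|$ saddles cost $\chi=-|h|$, so the assembled surface would have genus about $g_*(T)+\frac{|h|+m-2}{2}$ for the terminal $m$-component torus link $T$, which exceeds $l+h=l-|h|$ as soon as $|h|\geq 1$ in the extremal case $l=|h|$. The paper's construction uses exactly \emph{one} saddle, placed on the \emph{central} strand (the $2k$-tangle), joining two arcs on different components. This merges the link into a knot at zero genus cost and makes the $2k$ twists removable by Reidemeister I; the $2|h|$ negative crossings of strand 1 then cancel against $2|h|$ of the $2l+1$ positive crossings of strand 3 by Reidemeister II moves --- an isotopy, not further surgery --- leaving $T_{2,2(l+h)+1}$, whose slice genus is $l+h$ by Proposition \ref{prop:Milnor} (the unknot when $l+h=-1$). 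Gluing its slice surface to the saddle gives a connected genus-$(l+h)$ surface; for the other relative orientation the saddle is placed on the left strand instead. A last remark: \eqref{inf_s} and \eqref{sup_s} only pin $s$ down to $2l+2h+2\pm1$, and the paper uses the upper bound on $g_*$ to exclude the larger value; your lower bound on $g_*$ itself only needs $s\geq V=2l+2h+1$, so that part of your logic is sound once $V$ is computed and pseudo-thinness is correctly established.
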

Finally, let $Tw_n$ with $n\geq 0$ be the link of Figure~\ref{Tw_n}.
\begin{figure}[H]
  \centering
  \includegraphics[width=0.73\textwidth]{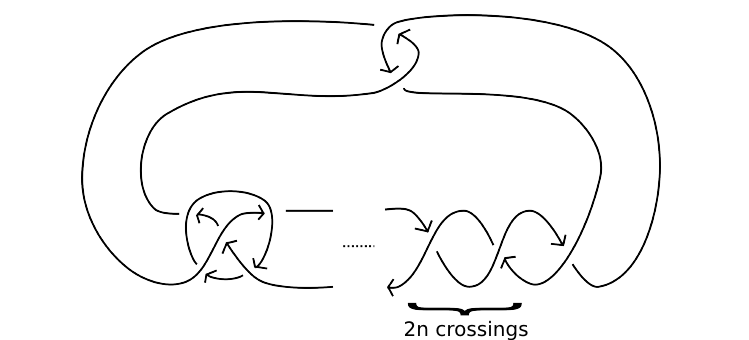}  
  \caption{The link $Tw_n$}
  \label{Tw_n}
\end{figure}
$Tw_{n}$ is a two component non split link with a twist knot and an unknot as components. Using Equation \eqref{inf_s} we have
$s(Tw_n)=3$ for all $n$ and, as we will see, this implies $g_*(Tw_n)\geq 1$. Indeed for $n=1$ we can compute the precise 
value of the slice genus, that 
is $g_*(Tw_1)=1$.
All the links we mentioned before are pseudo-thin, but in general they are not H-thin.

We say that a cobordism between two $n$ components links is strong if there exist $n$ disjoint knot 
cobordisms between a component of the first link and one of the second link. We study these cobordisms in 
Section \ref{section:strong} where we prove the following theorem.
\begin{teo}
 \label{teo:not_concordant}
 Let $L$ be a pseudo-thin link with $h$ split components and let $\Sigma$ be a strong cobordism between $L$ and
 $M$, where $M$ is a link with $k$ split components.
 Then $g(\Sigma)\geq\lceil\frac{k-h}{2}\rceil$, where $g(\Sigma)$ is the genus of the surface $\Sigma$. In particular, every
 non split pseudo-thin link is not strongly concordant, namely if there exists a genus zero strong cobordism, to a split link.
\end{teo}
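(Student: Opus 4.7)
The plan is to combine the refined inequality (\ref{inf_s}) — which applies with a favourable form on pseudo-thin links — with the standard Lee-homology genus bound on the change of the Beliakova--Wehrli Rasmussen invariant, specialised to strong cobordisms.

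The first step is purely geometric: a strong cobordism decomposes as $\Sigma=\bigsqcup_{i=1}^{n}\Sigma_i$, with $\Sigma_i$ a connected surface of genus $g_i$ cobounding the paired components $L_i$ and $M_i$. Hence $g(\Sigma)=\sum g_i$, $\chi(\Sigma)=-2g(\Sigma)$, and an orientation $\mathbf{o}$ of $L$ is carried component-by-component to a compatible orientation $\mathbf{o}'$ of $M$. Because $|L|=|M|=n$ and each piece is connected, no splitting or merging of boundary circles takes place globally, which is what makes the strong hypothesis bite.

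Next I would establish a sharp genus inequality on $s$. Since Lee's functor is multiplicative under disjoint union and each $\Sigma_i$ is a connected knot-cobordism, Lee's theorem implies that the induced map $\Sigma_{*}$ sends the canonical Lee generator $\mathbf{s}_{L,\mathbf{o}}$ to a \emph{nonzero} multiple of $\mathbf{s}_{M,\mathbf{o}'}$. Tracking the $q$-filtered degree of $\Sigma_{*}$, which equals $\chi(\Sigma)=-2g(\Sigma)$, yields the estimate $|s(L,\mathbf{o})-s(M,\mathbf{o}')|\leq 2g(\Sigma)$.

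The heart of the argument is to convert this Rasmussen-invariant bound into a split-component bound via (\ref{inf_s}). Pseudo-thinness of $L$ provides a diagram $D_L$ and an orientation for which (\ref{inf_s}) is an \emph{equality}, giving $s(L,\mathbf{o})=2-2h+V(D_L,\mathbf{o})$; for $M$ one has the inequality $s(M,\mathbf{o}')\geq 2-2k+V(D_M,\mathbf{o}')$ for any choice of diagram $D_M$. I expect the main obstacle to be aligning the diagrammatic quantities $V(D_L,\mathbf{o})$ and $V(D_M,\mathbf{o}')$. The natural idea is to take a movie presentation of $\Sigma$ in which every elementary Morse move is localised inside a single $\Sigma_i$, and then to track how each of the $2g(\Sigma)$ handles affects the writhes and Seifert-circle counts that enter $V$; the strong structure ensures that these moves do not shuttle data between components, so their cumulative effect on $V$ can be absorbed into the $2g(\Sigma)$ slack already present in the $s$-inequality — producing exactly the factor of $1/2$ visible in the ceiling function.

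Combining these ingredients and rearranging yields $g(\Sigma)\geq \lceil (k-h)/2\rceil$. The ``in particular'' statement then follows instantly: if $L$ is non-split pseudo-thin then $h=1$, so any split $M$ has $k\geq 2$ and $\lceil (k-1)/2\rceil \geq 1$, precluding a genus-zero (that is, strongly concordant) cobordism.
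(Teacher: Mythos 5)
There is a genuine gap: the invariant you are relying on cannot see what the theorem measures. Your plan funnels everything through the single integer $s$ via the bound $|s(L)-s(M)|\leq 2g(\Sigma)$ and then tries to extract $k-h$ from Lobb-type diagrammatic data. But $s(M)$ carries no information about the number $k$ of split components of $M$: a non-split link can have any value of $s$, so one can have $s(L)=s(M)$ while $k-h$ is large, and your chain of inequalities then yields only $g(\Sigma)\geq 0$. Two intermediate steps are also unsupported: pseudo-thinness does \emph{not} force equality in \eqref{inf_s} (the paper proves equality only for non-split alternating diagrams), and the claim that the effect of the $2g(\Sigma)$ handles on $V$ can be ``absorbed into the slack'' is precisely the assertion that would need a proof --- $V$ is a diagram-dependent quantity with no a priori relation between a diagram of $L$ and a diagram of $M$.

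The data that actually detects $k$ is the full support of Pardon's invariant $d_M(0,\cdot)$, not $s$. By \eqref{d_support} each split component of $M$ has $d(0,\cdot)$ supported in at least two points, so the convolution formula \eqref{disjoint_union} forces the diameter $\max\{s\:|\:d_M(0,s)>0\}-\min\{s\:|\:d_M(0,s)>0\}$ to be at least $2k$, while pseudo-thinness of $L$ gives diameter exactly $2h$. A strong cobordism induces, by Proposition \ref{prop:filtered_homology}, filtered isomorphisms $\Hl^0(L)\rightarrow\Hl^0(M)\rightarrow\Hl^0(L)$ of degree $\chi(\Sigma)=-2g(\Sigma)$ in both directions, so the diameter can grow by at most $4g(\Sigma)$; hence $2k\leq 2h+4g(\Sigma)$, which gives the stated bound since $g(\Sigma)$ is an integer. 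Your first, geometric step (the decomposition of $\Sigma$ and $\chi(\Sigma)=-2g(\Sigma)$) is correct and is used in the paper; it is the analytic half of your argument that must be replaced by this diameter argument.
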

This is a generalization of Pardon's main result in \cite{Pardon};  using the same ideas, we also provide a lower bound for 
the genus of a 
strong cobordism between a pseudo-thin link and  
the $n$ component unlink:

\newpage

\begin{equation}
 \label{boh}
 g_*^*(L)\geq\dfrac{|s(L)|+n-1}{2}\:.
\end{equation}
This bound leads us to study an invariant of links similar to
the slice genus, but considering only strong cobordisms: the strong slice genus $g_*^*$.

At last, in Section \ref{section:examples}, we compute this invariant for an infinite family of pretzel links:
$$g_*^*(P_{2h,2h,2l+1})=l+h\:\:\:\:\:\text{ if }\:\:\:\:\:h,l\geq 0\:.$$

\subsection*{Acknowledgements}
This paper is an expanded excerpt from a Master's thesis prepared at the University of Pisa under
the supervision of Paolo Lisca, to whom many thanks are due for his advice and support.

\section{Cobordisms and filtered Lee homology}
\label{section:cobordisms}
\subsection{Link cobordism}
First we give the definitions of weak and strong cobordism.
\begin{defin}
 Let $L,L'$ be links such that  
 $$L:U=\underbrace{S^1\sqcup...\sqcup S^1}_{n\:\text{ times}}\rightarrow S^3\:\:\:\: 
 \text{ and }\:\:\:\: L':V=\underbrace{S^1\sqcup...\sqcup S^1}_{m\:\text{ times}}\rightarrow S^3\:.$$
 
 A weak cobordism of genus $g$ between $L$ and $L'$ is a smooth embedding 
 $$f:S_{g,n+m}\rightarrow S^3\times I$$ where $S_{g,n+m}$ is a compact orientable surface of genus $g$ in which
 $\partial S_{g,n+m}=-U\sqcup V$, every connected component of $S_{g,n+m}$ has boundary in $L$ and $L'$, and 
 $f(U)=L(U)\times\{0\}$ and $f(V)=L'(V)\times\{1\}$.
 
 $L$ and $L'$ are said to be weakly concordant if there exists a weak cobordism of genus $0$ between them.
 
 A cobordism is strong if the links also have the same number of components and if they satisfy the condition 
 $$S_{g,n+n}=S_{g_1,1+1}\sqcup...\sqcup S_{g_n,1+1}$$ with $g_1+...+g_n=g$.
 
 $L$ and $L'$ are strongly concordant if there exists a strong cobordism of genus $0$ between them.
\end{defin}
It is clear that the two definitions of cobordism are the same for knots. This means that there is no 
ambiguity in saying that a slice knot is a knot which is concordant to the unknot.

Since $H_1(D^4,S^3,\Z)\cong\{0\}$ it is easy to see that there is always a properly embedded orientable surface  
in $D^4$ with a given link $L$ as boundary: the minimum genus for this kind of surface is called the slice genus of $L$ 
and it is denoted with $g_*(L)$. Then we say that a link is slice if $g_*(L)=0$, that is if it is weakly concordant to the 
unknot. We have 
$g_*(L)\leq g(L)$ where $g(L)$ is the classic 3-genus of $L$. 

$\newline$ The signature of a link $L$ is the integer $\sigma(L)=\text{sgn }(A+A^T)$, where $A$ is a Seifert matrix
for $L$. Then, two strongly concordant links have the same signature; in particular a slice knot $K$ has $\sigma(K)=0$.
The signature of a link is a strong concordance invariant. In this paper we will see some other examples of such 
invariants.

\subsection{Link homology theories}
It is well known that we can associate a bigraded $\Q$-cochain complex $(C(D),\dd)$ to an oriented diagram $D$ of a link. The 
process has first 
been introduced by Mikhail Khovanov in \cite{Khovanov}, where he defined his own homology, and it is not described again 
in this paper. \newpage The differential $\dd$ is obtained from the following maps:
$$\begin{aligned}  
    &\:V\otimes V\xlongrightarrow{m}V \\
    &v_+\otimes v_+\rightarrow v_+ \\
    &v_+\otimes v_-\rightarrow v_- \\
    &v_-\otimes v_+\rightarrow v_- \\
    &v_-\otimes v_-\rightarrow 0
   \end{aligned}
  \hspace{1.5 cm}
   \begin{aligned}
    &\:\:\:\:\:\:\:\:V\xlongrightarrow{\Delta}V\otimes V \\
    &v_+\rightarrow v_+\otimes v_-+v_-\otimes v_+\\
    &v_-\rightarrow v_-\otimes v_-
   \end{aligned}
  $$
where $V$ is the two-dimensional $\Q$-space associated to a circle in each resolution of $D$. The maps $m$ and $\Delta$ are the
multiplication and the comultiplication of a Fr\"obenius algebra. This is enough to claim that $\dd$ is actually a differential; further details can 
be found 
in \cite{Khovanov} and \cite{Lee}. 

Rational Khovanov homology is the homology of $(C(D),\dd)$:
$$H^{i,j}(D)=\dfrac{\text{Ker }\dd^{i,j}}{\text{Im }\dd^{i-1,j}}$$
and it is invariant under Reidemeister moves.

We ask how we can change the maps $m$ and $\Delta$ so that they extend to a 
Fr\"obenius algebra and that, at the same time, the homology of the new
complex remains a link invariant; in this case $H_*(C(D),\dd)$ is called a homology link theory. The answer is that we must have
$$m_{(h,t)}:\left\{
   \begin{aligned}
    &v_+\otimes v_+\rightarrow v_+ \\
    &v_+\otimes v_-\rightarrow v_- \\
    &v_-\otimes v_+\rightarrow v_- \\
    &v_-\otimes v_-\rightarrow hv_-+tv_+
   \end{aligned}
 \right.\:\:\:\:\Delta_{(h,t)}:\left\{
  \begin{aligned}
    &v_+\rightarrow v_+\otimes v_-+v_-\otimes v_+ \\
    &\:\:\:\:\:\:\:\:\:\:\:\:\:-hv_+\otimes v_+ \\
    &v_-\rightarrow v_-\otimes v_-+tv_+\otimes v_+
   \end{aligned}
  \right.
\:(h,t)\in\Q\times\Q\:.$$ According to Turner \cite{Turner} the following theorem holds.
\begin{teo}
 There are only two, up to isomorphism, homology link theories: Khovanov homology, with pairs $(h,t)$ such that $h^2+4t=0$ and
 Lee homology, with $(h,t)$ such that $h^2+4t\neq 0$.
\end{teo}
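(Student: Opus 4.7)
The plan is to recognise the $2$-dimensional $\Q$-vector space $V$, endowed with the operations $m_{(h,t)},\Delta_{(h,t)}$ together with the usual unit $\eta(1)=v_+$ and counit $\epsilon(v_+)=0,\epsilon(v_-)=1$, as a Fr\"obenius algebra, and then classify such algebras up to isomorphism. First I would check that $A_{(h,t)}:=(V,m_{(h,t)})$ is nothing but the quotient ring $\Q[x]/(x^2-hx-t)$ with $v_+\leftrightarrow 1$ and $v_-\leftrightarrow x$, since the relation $x^2=hx+t$ reproduces the stated multiplication. With this identification the data $(m_{(h,t)},\Delta_{(h,t)},\eta,\epsilon)$ satisfies the Fr\"obenius axioms, so the link homology it constructs is the TQFT attached to $A_{(h,t)}$ on the Bar-Natan cobordism category. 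Consequently, any isomorphism of Fr\"obenius algebras $A_{(h,t)}\cong A_{(h',t')}$ induces an isomorphism of the two homology theories on every link diagram, so the question reduces to the algebraic classification of $A_{(h,t)}$.

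To carry out the classification I would complete the square, setting $y:=v_--\frac{h}{2}v_+$, which gives the new basis $\{v_+,y\}$ with
\[
y^2\;=\;\Bigl(\frac{h^2}{4}+t\Bigr)v_+\;=\;\frac{h^2+4t}{4}\,v_+.
\]
If $h^2+4t=0$ then $y^2=0$, and in the basis $\{v_+,y\}$ the multiplication is precisely the original Khovanov one (the $(h,t)=(0,0)$ case). If $h^2+4t\neq 0$ then, after rescaling $y$ by $2/\sqrt{h^2+4t}$ (passing, if necessary, to the extension $\Q(\sqrt{h^2+4t})$), one obtains $y^2=v_+$, which is the Lee algebra $(h,t)=(0,1)$. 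In both cases I would then verify directly that the same change of basis transforms $\Delta_{(h,t)}$, $\eta$, and $\epsilon$ into the corresponding data of the target model; this is a finite computation using the Fr\"obenius compatibility, essentially because $\Delta$ and $\epsilon$ are determined by $m$ and the non-degenerate bilinear form $(a,b)\mapsto\epsilon(ab)$, and one only needs to check that this pairing is preserved by the change of basis.

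The main obstacle I expect is the verification in Case 1 that the change of basis really does intertwine the two comultiplications: on $v_+$ we must check that
\[
\Delta_{(h,t)}(v_+)\;=\;v_+\otimes y+y\otimes v_+
\]
after the substitution $y=v_--\tfrac{h}{2}v_+$, and similarly on $y$ the image must be $y\otimes y$. A careful expansion shows the cross terms cancel precisely because $h^2+4t=0$, which is the mechanism that makes Khovanov homology the ``degenerate'' limit of the family. A secondary subtlety, in Case 2, is the passage to an extension field when $h^2+4t$ is not a rational square; here one argues that the resulting link homology over $\Q(\sqrt{h^2+4t})$ is isomorphic to Lee homology extended to the same field, which is the form in which Turner's classification is usually stated. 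Once these two checks are in place, the theorem follows.
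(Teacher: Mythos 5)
First, note that the paper itself offers no proof of this statement: it is quoted from Turner's lecture notes. So your proposal has to be judged on its own merits, and its overall strategy --- identify $(V,m_{(h,t)})$ with $\Q[x]/(x^2-hx-t)$, complete the square, and let the discriminant $h^2+4t$ decide between the nilpotent algebra $\Q[y]/(y^2)$ and a semisimple one --- is indeed the standard route. Your Case 1 is correct and complete in outline: with $y=v_--\tfrac{h}{2}v_+$ one checks $\epsilon(y)=1$, $y^2=\tfrac{h^2+4t}{4}v_+=0$, $\Delta(v_+)=v_+\otimes y+y\otimes v_+$ and $\Delta(y)=y\otimes y+\tfrac{h^2+4t}{4}v_+\otimes v_+=y\otimes y$, so the change of basis is an honest isomorphism of Fr\"obenius algebras onto the Khovanov one, and the cross terms cancel exactly because $h^2+4t=0$, as you say.

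Case 2, however, has a genuine gap. After completing the square you are comparing the $(0,\mu)$ theory, $\mu=\tfrac{h^2+4t}{4}\neq 0$, with the Lee model, and the only algebra isomorphisms $\Q[y]/(y^2-\mu)\to\Q[z]/(z^2-1)$ fixing the unit send $y\mapsto\sqrt{\mu}\,z$; such a map sends the counit to $\sqrt{\mu}$ times the target counit, and correspondingly matches the comultiplications only up to the scalar $\sqrt{\mu}$. So the ``finite computation'' you propose would not close: these Fr\"obenius algebras are genuinely non-isomorphic for $\mu\neq 1$ (this already applies to comparing the paper's own choice $(0,\tfrac14)$ with Lee's original $(0,1)$), and the implication ``isomorphic Fr\"obenius algebras $\Rightarrow$ isomorphic theories'' is not enough. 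What is missing is the twisting step: one must rescale the chain groups of the cube of resolutions by suitable powers of $\sqrt{\mu}$ (depending on the homological degree and the number of circles at each vertex) to intertwine the two differentials; equivalently, invoke the fact that rescaling the counit of a Fr\"obenius system yields an isomorphic link homology. Separately, your field-extension caveat is also a real issue, not a formality: if $h^2+4t$ is a nonzero non-square, $\Q[x]/(x^2-hx-t)$ is a quadratic field, which is not isomorphic to $\Q\times\Q$ over $\Q$; flat base change gives equality of dimensions in each degree, but producing an isomorphism of theories over $\Q$ (or deciding that the theorem should be read after base change) requires an argument you currently only gesture at with ``one argues that''. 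Both gaps are fixable by standard means, but as written the proof does not go through in Case 2.
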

We define Lee homology considering $\dd$ obtained by the pair $(0,\frac{1}{4})$, this simply for numeric convenience. 
The differential $\dd_{\text{Lee}}$ is graded, thus we have $$\Hl^i(L)=\dfrac{\text{Ker }\dd_{\text{Lee}}^i}{\text{Im }\dd_{\text{Lee}}^{i-1}}\:.$$
From \cite{Lee}, we know that $\text{dim }\Hl(L)$=$2^n$, where $n$ is the number of components of $L$. Moreover, taking 
$2\cdot\text{lk}({\bf o'},{\bf o})=n_-(D,{\bf o'})-n_-(D,{\bf o})$ where ${\bf o}$ and ${\bf o'}$ are two orientations of a 
diagram
$D$ of $L$, we also have (\cite{Lee}) the following proposition.
\begin{prop}
 $$\text{dim }H_{\text{Lee}}^i(L)=\bigl|\left\{{\bf o'}\in\OO(D)\:|\:2\cdot\text{lk}({\bf o'},{\bf o})=i\right\}\bigr|$$
 and
 $$H_{\text{Lee}}^i(L)=\text{Span}\left\{[v({\bf o'})]\in\Hl(L)\:|\:2\cdot\text{lk}({\bf o'},{\bf o})=i\right\}$$
 where $(D,{\bf o})$ is an oriented diagram of $L$ and $\OO(D)$ is the set of all $2^n$ orientations of $D$.
 $\newline$ For every ${\bf o}\in\OO(D)$ $[v({\bf o})]$ is the canonical generator of $\Hl(L)$ associated to ${\bf o}$ as 
 described in \cite{Rasmussen}.
\end{prop}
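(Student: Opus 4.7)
The plan is to follow Lee's original argument: produce an explicit basis of $H_{\text{Lee}}(L)$ indexed by the $2^n$ orientations of $D$, and then read off the homological degree of each basis element from its position in the cube of resolutions.

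First, I would perform the change of basis in $V$ from $\{v_+,v_-\}$ to the eigenvectors $\mathbf{a} = v_+ + \tfrac{1}{2}v_-$ and $\mathbf{b} = v_+ - \tfrac{1}{2}v_-$ of the multiplication $m_{(0,1/4)}$. In this basis the Lee Fröbenius algebra splits as a product of two copies of $\Q$, and the formulas for $m$ and $\Delta$ simplify drastically. For any orientation $\mathbf{o}'\in\OO(D)$, the oriented resolution $D_{\mathbf{o}'}$ is a disjoint union of circles; one checks that orientations of $D$ are in bijection with two-colourings of the circles of $D_{\mathbf{o}'}$ that are consistent with the Seifert picture. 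I would then set $v(\mathbf{o}')$ to be the element of the complex supported at the vertex $D_{\mathbf{o}'}$ obtained by tensoring $\mathbf{a}$ on the circles coloured one way and $\mathbf{b}$ on the circles coloured the other way, following Rasmussen's convention.

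The second step is to verify that each $v(\mathbf{o}')$ is a cocycle: this is a local computation in the new basis, where the edge maps of the cube applied to a pure tensor of $\mathbf{a}$'s and $\mathbf{b}$'s give either zero or a pure tensor that cancels with the tensor from the adjacent edge. Counting the orientations gives exactly $2^n$ classes, which matches Lee's computation that $\dim\Hl(L) = 2^n$, so to conclude that the $[v(\mathbf{o}')]$ form a basis of $\Hl(L)$ it suffices to show they are linearly independent. For this I would argue by triangularity in the new basis: each $v(\mathbf{o}')$ is supported at a \emph{different} vertex of the cube, hence the collection is already linearly independent at the chain level, and a fortiori in homology.

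Finally I would pin down the homological degree of $[v(\mathbf{o}')]$. The vertex $D_{\mathbf{o}'}$ corresponds to taking, at each crossing $c$ of $D$, the oriented smoothing with respect to $\mathbf{o}'$; this uses the $1$-smoothing of $(D,\mathbf{o})$ precisely at those crossings whose sign changes when passing from $\mathbf{o}$ to $\mathbf{o}'$. The homological degree is then $r - n_-(D,\mathbf{o})$, where $r$ is the number of $1$-smoothings used, and a direct count shows $r = n_-(D,\mathbf{o}')$, so the degree equals $n_-(D,\mathbf{o}') - n_-(D,\mathbf{o}) = 2\cdot\text{lk}(\mathbf{o}',\mathbf{o})$.

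The main obstacle is the second step: once the change of basis is carried out, one has to check carefully that all edge contributions at every vertex of the cube cancel in pairs, which is essentially the content of Lee's observation that her differential becomes a boundary on a Koszul-type complex in the $\{\mathbf{a},\mathbf{b}\}$ basis. Everything else — dimension count, independence, and the degree computation — is bookkeeping once this local cancellation is in hand.
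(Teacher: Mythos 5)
The paper does not actually prove this proposition; it is quoted directly from Lee's paper, so what you are writing is a reconstruction of Lee's original argument. Your overall plan --- diagonalise the Frobenius algebra, attach a canonical cycle to each orientation, and read off the homological degree from the position of the oriented resolution in the cube --- is the right one, and your degree computation ($r=n_-(D,\mathbf{o}')$, hence degree $n_-(D,\mathbf{o}')-n_-(D,\mathbf{o})=2\cdot\mathrm{lk}(\mathbf{o}',\mathbf{o})$) matches the paper's convention exactly. One small slip: with $(h,t)=(0,\tfrac14)$ the relevant eigenvectors are $v_+\pm 2v_-$, not $v_+\pm\tfrac12 v_-$; the element $v_++\tfrac12 v_-$ is not an eigenvector of multiplication in $\Q[X]/(X^2-\tfrac14)$, and the local cancellation you rely on in step two only works with the correct idempotents.

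The genuine gap is in your linear-independence step. First, the claim that each $v(\mathbf{o}')$ sits at a different vertex of the cube is false: $\mathbf{o}'$ and $-\mathbf{o}'$ always determine the same oriented resolution (and further coincidences occur). That by itself is harmless, since the $v(\mathbf{o}')$ are still distinct pure tensors in a basis and hence independent as chains. The real problem is the inference ``linearly independent at the chain level, and a fortiori in homology'': this implication is simply not valid. Cocycles that are independent as chains can satisfy a nontrivial relation modulo coboundaries, so exhibiting $2^n$ independent cocycles together with the fact that $\dim\Hl(L)=2^n$ does not show they form a basis --- you must additionally show either that the classes span $\Hl(L)$ or that no nontrivial combination of them is exact. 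Lee's actual argument supplies this by observing that, in the $\{\mathbf{a},\mathbf{b}\}$ basis, the entire cube complex decomposes as a direct sum of subcomplexes, each of which is either acyclic or has one-dimensional homology generated by exactly one $v(\mathbf{o}')$; the dimension count and the independence in homology then come out simultaneously. Some version of that decomposition (or an equivalent inductive long-exact-sequence argument) is the missing ingredient in your write-up, and it is precisely the ``main obstacle'' you mention at the end without resolving.
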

Lee homology has been defined by Eun Soo Lee in order to prove an important conjecture on H-thin links, namely links whose 
Khovanov 
invariant $H^{i,j}(L)$ is supported in two diagonal lines $j=2i+s\pm 1$ for some integer $s$; see Lee's paper for more 
informations. Our goal is to show some other important applications of this theory.

\subsection{A filtration on \texorpdfstring{$\Hl(L)$}{Lg}}
Rasmussen has equipped Lee homology with the structure of a decreasing filtration. Let $(C(D),\dd_{\text{Lee}})$ be the
Lee complex for a link diagram $D$; for all $s\in\Z$ we take 
$$\mathcal{F}^sC^i(D)=\text{Span }\{v\in C^{i,j}(D)\:|\:j\geq s\}\:.$$
It is easy to see that $\mathcal{F}^sC^i(D)\supset\mathcal{F}^{s+1}C^i(D)$ and, from \cite{Rasmussen}, we have 
$$\dd_{\text{Lee}}^i(\mathcal{F}^sC^i(D))\subset\mathcal{F}^sC^{i+1}(D)$$ 
so the filtration descends to homology:
$$\mathcal{F}^sH^i_{\text{Lee}}(D)=\dfrac{\mathcal{F}^s\text{Ker }\dd_{\text{Lee}}^i}{\mathcal{F}^s\text{Im }\dd_\text{Lee}^{i-1}}\:.$$
For $[v]\in H^i_{\text{Lee}}(D)$ we say that $\text{sdeg}[v]=s$ if
$[v]\in\mathcal{F}^sH^i_{\text{Lee}}(D)\setminus\mathcal{F}^{s+1}H^i_{\text{Lee}}(D)$ that is
$$\mathcal{F}^sH^i_{\text{Lee}}(D)=\text{Span}\{[v]\in H^i_{\text{Lee}}(D)\:|\:\text{sdeg}[v]\geq s\}\:.$$
\begin{defin}
 Let $(\mathcal{C},\dd,\mathcal{F})$ and $(\mathcal{C}',\dd',\mathcal{F})$ be cochain complexes with a filtration, then a map of complexes 
 $f:\mathcal{C}\rightarrow\mathcal{C}'$ is 
 a filtered map of degree $n$ if it satisfies
 $f(\mathcal{F}^s\mathcal{C}^i)\subset\mathcal{F}^{s+n}(\mathcal{C}')^i$ for every $i$.
 We say that $f$ respects the filtration if it is filtered of degree 0.
\end{defin}
It is obvious that, if $f:\mathcal{C}\rightarrow\mathcal{C}'$ is a filtered map of degree $n$, 
then $f_*(\mathcal{F}^sH_i(\mathcal{C}))\subset\mathcal{F}^{s+n}H_i(\mathcal{C}')$ for every $i$, so that the induced map in homology has 
the same degree.
$\newline$ Rasmussen (\cite{Rasmussen}) has also proved that filtered Lee homology is a link invariant.
\begin{teo}
 If $D$ and $D'$ are two diagrams of equivalent links then 
 $$\mathcal{F}^sH^i_{\text{Lee}}(D)\cong\mathcal{F}^sH^i_{\text{Lee}}(D')\:\:\:\:\:\text{for every }i,s\in\Z$$
\end{teo}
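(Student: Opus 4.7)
The plan is to prove invariance by checking that each Reidemeister move induces a chain map between Lee complexes that respects the filtration, and that its homotopy inverse does too. Since Reidemeister's theorem guarantees that any two diagrams of equivalent links are connected by a finite sequence of R1, R2, R3 moves, it is enough to treat a single such move; by the remark preceding the theorem, a filtered chain equivalence of degree $0$ descends to isomorphisms $\mathcal{F}^sH^i_{\text{Lee}}(D)\cong\mathcal{F}^sH^i_{\text{Lee}}(D')$ for every $i,s$.

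The starting point is Khovanov's original construction: for each Reidemeister move he produced explicit chain maps $\rho\colon C(D)\to C(D')$ and $\sigma\colon C(D')\to C(D)$ commuting with the Khovanov differential $\dd_0$ (the one coming from the pair $(h,t)=(0,0)$), together with chain homotopies witnessing $\sigma\rho\simeq\mathrm{id}$ and $\rho\sigma\simeq\mathrm{id}$. Crucially, these maps and homotopies are homogeneous of $q$-degree $0$. First I would decompose the Lee differential as $\dd_{\text{Lee}}=\dd_0+\Phi$, where $\Phi$ collects the $t$-contributions in $m_{(0,1/4)}$ and $\Delta_{(0,1/4)}$; by inspecting the formulas one sees that $\Phi$ strictly raises the $q$-grading (by $4$). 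Thus $\Phi$ is a filtered operator of strictly positive degree.

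The heart of the argument is the following lifting lemma: any $\dd_0$-chain map $\rho$ homogeneous of $q$-degree $0$ can be corrected to a $\dd_{\text{Lee}}$-chain map $\tilde{\rho}=\rho+\rho_1+\rho_2+\cdots$ whose correction terms $\rho_k$ strictly raise the $q$-grading (so that $\tilde{\rho}$ is filtered of degree $0$), and similarly for $\sigma$ and the homotopies. One builds $\rho_k$ by induction on $k$: at each stage the obstruction to $\tilde{\rho}$ being a chain map lies in a subcomplex shifted upward in $q$-degree, and vanishes in Khovanov homology since $\rho$ already intertwined $\dd_0$; contractibility of the relevant obstruction complex (the same one Khovanov uses to verify invariance of his theory) then supplies $\rho_k$. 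The process terminates because the complex is finite-dimensional in each homological degree, so only finitely many corrections can act nontrivially.

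Granting the lifting lemma, the filtered maps $\tilde{\rho}$ and $\tilde{\sigma}$ induce maps on Lee homology that are filtered of degree $0$; the lifted homotopies show these are mutual inverses, giving the desired filtered isomorphism. The main obstacle is the lifting lemma itself: one must verify that the inductive obstructions really lie in acyclic pieces after filtering and that the correction can be chosen filtered of strictly positive degree. Once this combinatorial step is carried out for R1, R2 and R3 (invariance under planar isotopy being immediate from the definition), the theorem follows.
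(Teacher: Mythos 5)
Your overall strategy (reduce to a single Reidemeister move, produce a chain homotopy equivalence of Lee complexes that is filtered of degree $0$ in both directions, and let it descend to $\mathcal{F}^sH^i_{\text{Lee}}$) is the right shape, and your observation that $\dd_{\text{Lee}}=\dd_0+\Phi$ with $\Phi$ raising the quantum grading by $4$ is correct. But the proof hinges entirely on your ``lifting lemma,'' and as stated that lemma has a genuine gap. The first obstruction to correcting $\rho$ is the class of $\rho\Phi-\Phi\rho$ in $H^1$ of the complex $\bigl(\mathrm{Hom}(C(D),C(D')),\,f\mapsto \dd_0 f\pm f\dd_0\bigr)$ in internal degree $4$. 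Over $\Q$ this cohomology group is $\prod_i\mathrm{Hom}\bigl(H^i(D),H^{i+1}(D')\bigr)$ computed from Khovanov homology, which is emphatically nonzero for most links, so there is no soft reason for the obstruction to vanish. The fact that $\rho$ intertwines $\dd_0$ only says $\rho$ is a cycle in degree $0$ of the Hom-complex; it says nothing about the degree-$1$ obstruction. The ``contractibility of the relevant obstruction complex'' you invoke is not the acyclicity Khovanov verifies (his concerns mapping cones attached to the Reidemeister moves within the undeformed theory), and a general homological perturbation argument would only transfer $\Phi'$ from $C(D')$ to \emph{some} perturbation of $\dd_0$ on $C(D)$, with no guarantee that it agrees with the prescribed Lee perturbation $\Phi$. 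The same problem recurs when you try to lift the homotopies.

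The paper (following Rasmussen) sidesteps all of this: Lee already constructed explicit chain homotopy equivalences $\rho,\sigma$ and homotopies for each Reidemeister move \emph{for the Lee differential itself}, by deforming Khovanov's maps. One then checks by direct inspection of those formulas that every homogeneous component of $\rho$, of $\sigma$, and of the homotopies has quantum degree greater than or equal to that of the leading (Khovanov) term, i.e.\ the maps are filtered of degree $0$. That direct verification is the content of the one-line proof in the text, and it is what your argument is missing: either carry out that inspection of Lee's explicit maps, or supply an actual proof that the inductive obstructions vanish, which your current write-up does not do.
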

in fact, if $D$ and $D'$ differ by a Reidemeister move, all the isomorphisms in Lee homology set in \cite{Lee} and their inverses 
respect the filtration.

From \cite{Rasmussen} we have the following two propositions. 
\begin{prop}
 \label{prop:filtered_homology1}
 Let $L$ and $L'$ be links with diagrams $D$ and $D'$ and let $\Sigma$ be a weak cobordism between them; then there is 
 $$F_{\Sigma}:(C(D),\dd_{\text{Lee}},\mathcal{F})\longrightarrow(C(D'),\dd_{\text{Lee}},\mathcal{F})$$ a filtered map of
 degree $\chi(\Sigma)$ which induces
 $$(F_{\Sigma}^i)^*:\Hl^i(L)\rightarrow\Hl^i(L')$$ filtered of degree $\chi(\Sigma)$. 
\end{prop}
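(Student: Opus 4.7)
The plan is to decompose $\Sigma$ into elementary cobordisms via Morse theory, assign a filtered chain map on the Lee complexes to each elementary piece, and compose them. Concretely, after a small perturbation I may assume that the projection $\pi\colon S^3\times I\to I$ restricts to a Morse function $\pi|_\Sigma$ with distinct critical values. This yields a movie presentation of $\Sigma$ in which consecutive frames differ either by a planar isotopy, by a Reidemeister move, or by precisely one Morse move of index $0$, $1$, or $2$ (a birth, a saddle, or a death).

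To each such frame change I associate a chain map between the corresponding Lee complexes. Planar isotopies and Reidemeister moves are handled by the Lee invariance isomorphisms from \cite{Lee}, which Rasmussen verified in \cite{Rasmussen} to be filtered of degree $0$. A birth is realised by the unit $\eta\colon\Q\to V$, $1\mapsto v_+$, applied on the new circle factor; this is homogeneous of $j$-degree $+1$, matching $\chi=+1$ for the piece. A death is realised by the counit $\epsilon\colon V\to\Q$, $v_+\mapsto 0$, $v_-\mapsto 1$, applied on the vanishing circle factor; since $v_-$ has $j$-grading $-1$, this is also of $j$-degree $+1$, matching $\chi=+1$. A saddle is realised by the Frobenius multiplication $m_{(0,1/4)}$ or comultiplication $\Delta_{(0,1/4)}$ introduced earlier, chosen according to whether the two local arcs of the saddle belong to the same circle or to different circles in the relevant resolution.

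The remaining check is that each saddle map is filtered of degree $-1$, matching $\chi=-1$. The leading Khovanov summands of $m_{(0,1/4)}$ and $\Delta_{(0,1/4)}$ lower $j$-grading by exactly $1$, while the additional Lee terms $\tfrac{1}{4}v_+$ in $m(v_-\otimes v_-)$ and $\tfrac{1}{4}v_+\otimes v_+$ in $\Delta(v_-)$ land in strictly higher filtration than the leading summands, so they do not worsen the estimate. Composing all elementary maps and using that filtered degrees add under composition yields $F_\Sigma$ of total filtered degree
\[
(+1)\cdot b_0 + (-1)\cdot b_1 + (+1)\cdot b_2 = \chi(\Sigma),
\]
where $b_0,b_1,b_2$ are the numbers of births, saddles and deaths in the movie. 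Passing to cohomology, a filtered chain map of degree $n$ induces a filtered map on $\Hl^i$ of the same degree, giving the required $(F_\Sigma^i)^*$.

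The main obstacle is checking that passing from the Khovanov differential to the Lee differential does not spoil the filtered degree of the saddle map; the key observation, as above, is that the extra terms introduced by the pair $(0,\tfrac{1}{4})$ live in strictly higher filtration than the standard Khovanov terms, so the worst case is still realised by the leading summand. Since the proposition only asserts \emph{existence} of $F_\Sigma$, independence from the chosen Morse decomposition (that is, functoriality up to filtered chain homotopy) is not needed at this stage.
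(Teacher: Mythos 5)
Your proposal is correct and is essentially the argument the paper relies on: the paper simply cites Rasmussen for this proposition, and Rasmussen's proof (extended to links by Beliakova--Wehrli) is exactly your movie-decomposition argument, with the unit, counit, and Frobenius (co)multiplication assigned to the elementary pieces and the degree count $b_0-b_1+b_2=\chi(\Sigma)$. Your key check --- that the extra Lee terms $\tfrac{1}{4}v_+$ and $\tfrac{1}{4}v_+\otimes v_+$ lie in strictly higher filtration, so the saddle maps remain filtered of degree $-1$ --- is the right one and matches the convention $f(\mathcal{F}^s)\subset\mathcal{F}^{s+\chi(\Sigma)}$ used in the paper.
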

\begin{prop}
 \label{prop:filtered_homology}
 If $\Sigma$ is a strong cobordism between $L$ and $L'$ then $$(F_{\Sigma}^i)^*:\Hl^i(L)\rightarrow\Hl^i(L')$$ is a filtered 
 isomorphism of degree $\chi(\Sigma)$.
\end{prop}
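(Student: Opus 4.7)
The plan is to upgrade Proposition~\ref{prop:filtered_homology1} by exploiting the strong structure of $\Sigma$. The filtered-of-degree-$\chi(\Sigma)$ property is already supplied by Proposition~\ref{prop:filtered_homology1}, so two further points remain: that $(F_\Sigma^i)^*$ is bijective, and that the shift is \emph{sharp}, so its inverse is filtered of degree $-\chi(\Sigma)$.

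Since $\Sigma$ is strong, $L$ and $L'$ have the same number $n$ of components, and the earlier proposition computing $\dim\Hl(L)=2^n$ gives $\dim\Hl(L)=\dim\Hl(L')$. Thus it suffices to verify injectivity on a basis. The natural choice is the canonical basis $\{[v(\mathbf{o})]:\mathbf{o}\in\OO(D)\}$. The strong decomposition $\Sigma=\Sigma_1\sqcup\cdots\sqcup\Sigma_n$ into connected cobordisms yields a bijection $\OO(D)\leftrightarrow\OO(D')$ sending $\mathbf{o}$ to the orientation $\mathbf{o}'$ whose restriction to the $k$-th component of $L'$ is the one induced from $\mathbf{o}|_k$ along $\Sigma_k$.

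The key technical claim is that $(F_\Sigma^i)^*([v(\mathbf{o})])=\lambda_\mathbf{o}\,[v(\mathbf{o}')]$ for some nonzero scalar $\lambda_\mathbf{o}$. To prove it, I would decompose $\Sigma$ into elementary Morse moves (births, deaths, saddles) subordinate to the partition into $\Sigma_k$'s, and track the cobordism map on canonical generators. The crucial point is that Lee's basis vectors $\mathbf{a}=v_++v_-$ and $\mathbf{b}=v_+-v_-$ are eigenvectors for the Frobenius operations underlying $F_\Sigma$, so each elementary move within a single $\Sigma_k$ sends a canonical generator to a nonzero scalar multiple of the canonical generator associated with the updated orientation. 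Composing these contributions yields the claim, and since $(F_\Sigma^i)^*$ sends a basis to nonzero multiples of another basis, it is bijective.

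For sharpness of the filtration shift, one uses Rasmussen's explicit formula for $\text{sdeg}[v(\mathbf{o})]$ in terms of the writhe of $D$ and of $\text{lk}(\mathbf{o},\mathbf{o}_0)$, and checks by direct calculation that $\text{sdeg}[v(\mathbf{o}')]-\text{sdeg}[v(\mathbf{o})]=\chi(\Sigma)$ under the component bijection. Combined with $\lambda_\mathbf{o}\neq 0$, this pins the filtration shift down to exactly $\chi(\Sigma)$, so $((F_\Sigma^i)^*)^{-1}$ is filtered of degree $-\chi(\Sigma)$. The main obstacle lies in making the nonzero-scalar claim rigorous for saddle moves, since a generic saddle can kill elements of the Lee complex and one must genuinely invoke the eigenvector property of $\mathbf{a},\mathbf{b}$. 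The strong structure is what keeps the bookkeeping tractable, because every saddle inside a single $\Sigma_k$ joins or splits Seifert circles carrying a compatible orientation assignment, keeping the image on the canonical-generator locus.
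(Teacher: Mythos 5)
The paper itself offers no proof of this proposition: both it and Proposition~\ref{prop:filtered_homology1} are simply quoted from Rasmussen's paper (the link version being due to Beliakova--Wehrli and Pardon), so your reconstruction is really being measured against the literature rather than against an argument in the text. Your overall strategy is the standard one: reduce to showing that $(F_{\Sigma}^i)^*$ carries each canonical generator $[v(\mathbf{o})]$ to a nonzero multiple of $[v(\mathbf{o'})]$, where $\mathbf{o'}$ is the orientation induced through the component-preserving decomposition of $\Sigma$, and then conclude bijectivity from $\dim\Hl(L)=2^n=\dim\Hl(L')$. This is Rasmussen's Proposition~4.1, whose hypothesis (every component of the cobordism has boundary in the source link) is automatic for a strong cobordism, so the skeleton is sound.

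Two steps as written would fail, however. First, it is \emph{not} true that each elementary move sends a canonical generator to a nonzero scalar multiple of a single canonical generator: a birth (0-handle) is the unit map $1\mapsto v_+$, and since $v_+$ is a combination of both eigenvectors $\mathbf{a},\mathbf{b}$ with nonzero coefficients, a canonical generator is sent to the \emph{sum} of the two canonical generators corresponding to the two orientations of the new circle; moreover a merge of two circles labelled $\mathbf{a}$ and $\mathbf{b}$ gives zero. Rasmussen's actual proof does not go move by move in this naive way; it uses the global hypothesis on $\Sigma$ to show that the coefficient of $[v(\mathbf{o'})]$ in the composite is nonzero (the superfluous term created by a birth is killed only when that circle is later merged into the rest of its component). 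You must either reproduce that argument or cite it. Second, there is no ``explicit formula for $\text{sdeg}[v(\mathbf{o})]$ in terms of the writhe and linking numbers'': if there were, $s$ would be combinatorially computable from any diagram, which it is not --- only inequalities of Lobb type hold. The degree statement for the inverse direction is instead obtained by running the identical argument on the reversed cobordism $-\Sigma$, which is again strong with the same Euler characteristic; this yields $F^*_{-\Sigma}$ filtered of degree $\chi(\Sigma)$ inverting $F^*_{\Sigma}$ up to a diagonal isomorphism in the canonical basis, and is exactly the form in which the paper later uses the proposition in the proof of Theorem~\ref{teo:not_concordant}.
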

Corollary \ref{cor:boh} follows immediately from Proposition \ref{prop:filtered_homology1} and \ref{prop:filtered_homology}.
\begin{cor}
 \label{cor:boh}
 If $L$ and $L'$ are strongly concordant then $\chi(\Sigma)=0$. In particular, $(F_{\Sigma}^i)^*$ and its inverse respect the 
 filtration and 
 $$\mathcal{F}^s\Hl^i(L)\cong\mathcal{F}^s\Hl^i(L')\:\:\:\:\:\forall i,s\in\Z\:.$$
 Furthermore, $(F_{\Sigma}^i)^*$ takes canonical generators of $\Hl(L)$ into canonical generators of $\Hl(L')$.  
\end{cor}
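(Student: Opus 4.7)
The plan is to handle the three assertions of the corollary in turn, with the first two essentially unpacking Proposition \ref{prop:filtered_homology} and the third being the real substance.

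For the Euler characteristic and the filtration statement: by the definition of strong cobordism (Section~2.1), a genus-zero strong cobordism $\Sigma$ between $n$-component links is a disjoint union of $n$ annuli $S_{0,1+1}$, each of Euler characteristic $0$, so $\chi(\Sigma)=0$. Proposition \ref{prop:filtered_homology} then yields that $(F_\Sigma^i)^*$ is a filtered isomorphism of degree $0$, i.e.\ it sends $\mathcal{F}^s\Hl^i(L)$ into $\mathcal{F}^s\Hl^i(L')$. Applying the same proposition to the reversed cobordism $\overline\Sigma$ (again strong and of genus $0$, hence with $\chi=0$) produces a degree-zero filtered map in the opposite direction; comparing it with $(F_\Sigma^i)^{*-1}$ via the functoriality of $F_\bullet$ forces the inverse to respect the filtration as well, giving $\mathcal{F}^s\Hl^i(L)\cong\mathcal{F}^s\Hl^i(L')$ for every $i,s\in\Z$.

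The substantive point is the claim on canonical generators. The strong cobordism $\Sigma$ induces a bijection between the components of $L$ and of $L'$, and hence a bijection ${\bf o}\mapsto\overline{\bf o}$ between $\OO(D)$ and $\OO(D')$. I would choose a movie decomposition of $\Sigma$ into elementary Morse pieces (births, saddles, deaths) and Reidemeister isotopies, arranged componentwise --- possible precisely because $\Sigma$ is a disjoint union of annuli. On each elementary piece, the induced map in Lee homology is computed via $m_{(0,1/4)}$ and $\Delta_{(0,1/4)}$, which become diagonal once we pass to Lee's idempotent basis $\mathbf{a}=v_++\tfrac{1}{2}v_-$, $\mathbf{b}=v_+-\tfrac{1}{2}v_-$. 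Since Rasmussen's canonical generators $[v({\bf o})]$ are exactly the tensor products of $\mathbf{a}$'s and $\mathbf{b}$'s dictated by ${\bf o}$ on the oriented resolution, each elementary piece permutes them up to a nonzero scalar compatibly with ${\bf o}\mapsto\overline{\bf o}$.

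The main obstacle is this last step, specifically the bookkeeping of the accumulated scalars through the movie. To avoid grinding this out piece by piece, I would exploit the annular structure of $\Sigma$: the map $(F_\Sigma^i)^*$ splits as a tensor product over the $n$ matched component-cobordisms, each of which is a knot concordance between a component of $L$ and a component of $L'$. Rasmussen's corresponding statement for knots in \cite{Rasmussen} then applies to each factor, and the total scalar for $\Sigma$ is the nonzero product of the per-component scalars. Thus $(F_\Sigma^i)^*[v({\bf o})]$ is a nonzero multiple of $[v(\overline{\bf o})]$, i.e.\ again a canonical generator, completing the proof.
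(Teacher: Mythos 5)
The paper derives this corollary in one line from Propositions \ref{prop:filtered_homology1} and \ref{prop:filtered_homology}, and your treatment of the first two assertions is the intended one: $\chi(\Sigma)=0$ because a genus-zero strong cobordism is a disjoint union of annuli, and the two degree-zero filtered injections $F^*_{\Sigma}$, $F^*_{-\Sigma}$ give $\dim\mathcal{F}^s\Hl^i(L)=\dim\mathcal{F}^s\Hl^i(L')$ for all $i,s$, whence the isomorphism of filtered groups. (Your appeal to functoriality to identify $F^*_{-\Sigma}$ with the inverse is slightly off --- the composite is only the map induced by the glued self-concordance, not the identity --- but the dimension count you are implicitly running is all that is needed.)

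The canonical-generator claim is where your argument has a genuine gap, on both of the routes you offer. First, it is not true that each elementary piece permutes the canonical generators up to a nonzero scalar: a birth (cup) sends $v({\bf o})$ to the \emph{sum} of the two canonical generators of the new diagram obtained by labelling the new circle with $\mathbf{a}$ or with $\mathbf{b}$, and a genus-zero movie may perfectly well contain births. Second, the fallback fails as stated: for a non-split link the Lee complex is built from a single diagram of the whole link and does \emph{not} decompose as a tensor product of per-component complexes, so $(F_\Sigma^i)^*$ does not split as a tensor product over the $n$ matched component-cobordisms, and Rasmussen's knot statement cannot be applied factor by factor. The correct statement to invoke is Rasmussen's Proposition 4.1 for links: if every component of the cobordism $S$ has a boundary component in the source link $L$, then $F_S^*[v({\bf o})]$ is a nonzero multiple of $[v(\overline{\bf o})]$, where $\overline{\bf o}$ is the orientation induced on $L'$ through $S$. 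A strong concordance is a disjoint union of annuli, each with one boundary circle on $L$, so the hypothesis holds and the claim follows. The proof of that proposition is exactly the bookkeeping you were trying to avoid: one tracks, through the movie, which circles of each intermediate diagram lie on surface components already connected to $L$, shows that $F^*$ of $[v({\bf o})]$ is at each stage a combination with nonzero coefficients of the canonical generators indexed by orientations agreeing with ${\bf o}$ on that part, and observes that after all merges only $\overline{\bf o}$ survives.
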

This means that filtered Lee homology is a strong concordance invariant.

\section{\texorpdfstring{$s$}{Lg} invariant lower bound for pseudo-thin links}
\label{section:three}
\subsection{Invariants from Lee homology}
We expect filtered Lee homology to be a powerful invariant, but it is very difficult to fully understand. 
What we do is to study two other invariants that are numeric reductions of $\mathcal{F}\Hl$. Thus,
they preserve the property of being a strong concordance invariant, combined with the advantage of being easier to 
compute.
\subsubsection{The Pardon invariant}
Let $D$ be an oriented diagram of a link $L$, then the function 
$$d_D:\Z\times\Z\rightarrow\Z_{\geq0}$$ 
$$d_D(i,s)=\text{dim }\dfrac{\mathcal{F}^sH^i_{\text{Lee}}(D)}{\mathcal{F}^{s+1}H^i_{\text{Lee}}(D)}$$ 
is the link concordance invariant defined by John Pardon in \cite{Pardon}. He proved the following properties.
\begin{prop}
 Let $L_1,L_2$ be links and $L$ be another link with orientation ${\bf o}$. Then we have
 \begin{equation}
 \label{d_support}
 \ssum_{s\equiv n+k\:(4)}d_L(i,s)=\left\{\begin{aligned}
                                                      &0\hspace{2.5cm}\text{ if }k\equiv 1\:\:\:\:\:\text{mod }2\\
                                                      &\dfrac{1}{2}\text{dim }\Hl^i(L)\:\:\:\:\text{ if }k\equiv 0\:\:\:\:\:\text{mod }2\\
                                                     \end{aligned}\right.
 \end{equation}
 \begin{equation}
  \label{d_mirror}
  d_{L^*}(i,s)=d_L(-i,-s)\hspace{1.8 cm}\text{ for all }i,s\in\Z
 \end{equation}
 \begin{equation}
  \label{disjoint_union}
  d_{D_1\sqcup D_2}(i,s)=(d_{D_1}*d_{D_2})(i,s)\:\:\:\:\:\text{for all }i,s\in\Z
 \end{equation}
 \begin{equation}
  d_{(D,{\bf o'})}(i,s)=d_{(D,{\bf o})}\left(i+2\cdot\text{lk}({\bf o'},{\bf o}),s+6\cdot\text{lk}({\bf o'},{\bf o})\right)
 \end{equation}
 where $*$ is convolution, ${\bf o'}$ is another orientation of $L$, while $L_1\sqcup L_2$ and $L^*$ stand 
 for disjoint union and mirror of links respectively.
\end{prop}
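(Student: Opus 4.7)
The plan is to establish each of the four identities separately; three of them are formal consequences of the structure of the Khovanov/Lee complex, while \eqref{d_support} requires a careful analysis of the canonical basis.

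For \eqref{d_support}, I would use the spanning set $\{[v(\mathbf{o'})]\}_{\mathbf{o'}\in\OO(D)}$ from the preceding proposition and pair each orientation $\mathbf{o'}$ with its global reverse $-\mathbf{o'}$. Since reversing both strands at every crossing preserves its sign, one has $n_-(D,-\mathbf{o'}) = n_-(D,\mathbf{o'})$, so $[v(\mathbf{o'})]$ and $[v(-\mathbf{o'})]$ live in the same homological degree $i = 2\,\text{lk}(\mathbf{o'},\mathbf{o})$. The key non-formal input from \cite{Rasmussen} is that the two combinations $[v(\mathbf{o'})] \pm [v(-\mathbf{o'})]$ are homogeneous with respect to the Lee filtration, with $s$-degrees differing by exactly $2$ and both congruent to $n \pmod{2}$, where $n$ is the number of components of $L$. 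Ranging over the $2^{n-1}$ unordered pairs $\{\mathbf{o'},-\mathbf{o'}\}$ with $2\,\text{lk}(\mathbf{o'},\mathbf{o})=i$ yields a basis of $\Hl^i(L)$ in which exactly half of the generators sit in $s \equiv n \pmod 4$ and the other half in $s \equiv n+2 \pmod 4$; summing $d_L(i,s)$ over each residue class $n+k$ mod $4$ gives the claimed dichotomy.

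For \eqref{d_mirror}, the diagram $D^*$ is obtained from $D$ by switching every crossing, and the Lee cochain complex $C(D^*)$ is naturally identified with the dual of $C(D)$ with both gradings negated. Dualising preserves the dimensions of the associated graded pieces of a filtered complex, so $d_{L^*}(i,s) = d_L(-i,-s)$ follows immediately.

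For \eqref{disjoint_union}, the identification $C(D_1 \sqcup D_2) \cong C(D_1) \otimes C(D_2)$ is bigraded-additive and intertwines the Lee differentials as $\dd_{\text{Lee}}\otimes 1 + 1 \otimes \dd_{\text{Lee}}$. Equipping the right-hand side with the tensor filtration identifies the associated graded of $\Hl(L_1 \sqcup L_2)$ with the tensor product of the associated gradeds of $\Hl(L_1)$ and $\Hl(L_2)$; expanding the resulting equality of two-variable generating functions degree-by-degree produces exactly the convolution $d_{D_1}\ast d_{D_2}$. For the orientation-change formula, the underlying unoriented complex is the same for $(D,\mathbf{o})$ and $(D,\mathbf{o'})$; only the grading conventions $i = |r| - n_-$ and $j = \deg_v + |r| + n_+ - 2 n_-$ change with the orientation. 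Using $\Delta n_- = n_-(D,\mathbf{o'}) - n_-(D,\mathbf{o}) = 2\,\text{lk}(\mathbf{o'},\mathbf{o})$ and $\Delta n_+ = -\Delta n_-$, the homological grading shifts by $-2\,\text{lk}(\mathbf{o'},\mathbf{o})$ and the quantum grading by $-3\Delta n_- = -6\,\text{lk}(\mathbf{o'},\mathbf{o})$. As the Lee filtration is defined entirely in terms of the $j$-grading, this shift passes directly to the associated graded, producing the stated identity.

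The main obstacle is \eqref{d_support}: the other three identities follow formally from the construction of the Lee complex and its filtration, but \eqref{d_support} genuinely relies on knowing the exact filtration degrees of the canonical pairs $[v(\mathbf{o'})] \pm [v(-\mathbf{o'})]$. Once that input from \cite{Rasmussen} is imported, the mod-$4$ counting is purely combinatorial.
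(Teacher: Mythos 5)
The paper offers no proof of this proposition at all --- it is quoted from Pardon \cite{Pardon} --- so your sketch has to stand on its own. Your arguments for \eqref{d_mirror}, \eqref{disjoint_union} and the orientation-change formula are fine and are the standard ones: duality negates both gradings, the tensor filtration turns K\"unneth into convolution, and your regrading computation $\Delta i=-2\,\mathrm{lk}(\mathbf{o'},\mathbf{o})$, $\Delta j=-6\,\mathrm{lk}(\mathbf{o'},\mathbf{o})$ is correct.

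The argument for \eqref{d_support}, however, has a genuine gap. You pass from ``$\Hl^i(L)$ has a basis $\{[v(\mathbf{o'})]\pm[v(-\mathbf{o'})]\}$ with prescribed $s$-degrees'' to a computation of $d_L(i,s)=\dim\mathcal{F}^s/\mathcal{F}^{s+1}$. But the $s$-degrees of an arbitrary basis of a filtered space only give the one-sided bound $\dim\mathcal{F}^s\geq\#\{\text{basis vectors of sdeg}\geq s\}$; they determine $d_L$ only if the basis is adapted to the filtration, and the canonical basis is not: already for the two-component unlink the four combinations $[v(\mathbf{o'})]\pm[v(-\mathbf{o'})]$ have sdegs $\{-2,0,-2,0\}$, whereas $d(0,s)$ equals $1,2,1$ at $s=2,0,-2$. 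Moreover, the ``key input'' you import --- that the two sdegs in each pair differ by \emph{exactly} $2$ --- is Rasmussen's theorem for knots; for links what \cite{Rasmussen} and \cite{Beliakova} actually give is only the congruence $s_{\max}\equiv s_{\min}+2\pmod 4$ (if the exact gap of $2$ held for every pair, much of the content of the pseudo-thin discussion in this paper would be vacuous). The mechanism that makes the mod-$4$ count work is an involution: there is a degree-$0$ filtered automorphism of the Lee complex interchanging $v(\mathbf{o'})$ and $v(-\mathbf{o'})$ up to sign, whose induced action on the associated graded (a subquotient of Khovanov homology, hence supported in $j\equiv n\pmod 2$ --- this, not a basis count, is what forces the vanishing for $k$ odd) is multiplication by $\pm1$ depending only on $s$ mod $4$. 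Its two eigenspaces on $\Hl^i(L)$ each have dimension $\tfrac12\dim\Hl^i(L)$ because it permutes the canonical generators in pairs with no fixed orientation, and comparing eigenspace dimensions before and after passing to the associated graded gives the equidistribution. Your pairing of orientations is the right starting point, but without the involution (or some other substitute for adaptedness) the conclusion does not follow.
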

It is easy to see that the unknot has $d(i,s)\neq 0$ if and only if $(i,s)=(0,\pm 1)$. Using \eqref{disjoint_union} we obtain
the values of $d$ for the $n$ component unlink $\bigsqcup^n\bigcirc$:
$$d_{\bigsqcup^n\bigcirc}(i,s)=\left\{\begin{aligned}
                                                   &\binom{n}{k}\:\text{ if }i=0,s=n-2k \\
                                                   &0\:\:\:\:\:\:\:\:\text{ otherwise}\\
                                                  \end{aligned}\right.
\:\:\:\:\:\text{with } k=0,...,n\:.$$
\subsubsection{The generalized Rasmussen invariant}
This is the invariant introduced by Anna Beliakova and Stephan Wehrli in \cite{Beliakova}.

Let $(D,{\bf o})$ be a diagram of a link $L$, the generalized Rasmussen invariant is the integer 
$$s(L)=\dfrac{1}{2}\bigl(\text{sdeg}[v({\bf o})+v({\bf -o})]+\text{sdeg}[v({\bf o})-v({\bf -o})]\bigr)\:.$$
From \cite{Lee} and \cite{Rasmussen} we know that H-thin links whose $H^{i,j}$ is non zero in $j=2i+c\pm 1$ 
have $d$ invariant supported in lines $s=2i+c\pm 1$. This implies that $s(L)$ is necessarily equal to $c$ and this leads us to the following 
corollary.
\begin{cor}
 If $L$ is a non split alternating link then $s(L)=-\sigma(L)$.
\end{cor}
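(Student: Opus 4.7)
The plan is to deduce the corollary directly from the two Lee–Rasmussen facts that have just been recalled, plus Lee's identification of the diagonal for alternating links. I would organise it in three short steps.

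First, invoke Lee's theorem that a non-split alternating link $L$ is H-thin: its rational Khovanov homology $H^{i,j}(L)$ is supported on the two adjacent diagonals $j=2i+c\pm 1$ for some integer $c$. This is the main theorem of Lee's paper on alternating links, so I would just cite it and not reprove it. Then, by the statement made in the paragraph immediately preceding the corollary, the Pardon invariant $d_L(i,s)$ is supported on the two lines $s=2i+c\pm 1$, and consequently $s(L)=c$.

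Second, I would show $c=-\sigma(L)$. Here I would follow Lee's original argument: because $L$ is H-thin, the Khovanov graded Euler characteristic (equal to the unnormalised Jones polynomial, up to sign and variable conventions) together with the total rank $2^n$ in Lee homology determines the pair of diagonals uniquely as a function of the Jones polynomial of $L$. Lee then identifies the off-set $c$ with $-\sigma(L)$ by comparing the leading term behaviour of the Jones polynomial for alternating links with the Gordon–Litherland-type computation of the signature from a reduced alternating diagram. This is exactly the content of Theorem 3 of Lee's H-thin paper, so I would quote it rather than reprove it.

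Finally, combining the two steps I obtain $s(L)=c=-\sigma(L)$, which is the claim. The main obstacle — really the only non-trivial point — is the identification $c=-\sigma(L)$ in Step 2; everything else in this corollary is a one-line composition of results already stated in the excerpt. Since the paper has already chosen to invoke both Lee's H-thin theorem and the "$d$-supported on two lines" property by reference, it is consistent to quote the signature identification from the same source and keep the proof to a few lines.
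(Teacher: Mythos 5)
Your proposal is correct and follows exactly the route the paper intends: the corollary is stated as an immediate consequence of the preceding paragraph, combining Lee's theorem that non-split alternating links are H-thin on the diagonals $j=2i+c\pm 1$ with $c=-\sigma(L)$, the resulting support of $d_L$ on $s=2i+c\pm 1$, and the observation that this forces $s(L)=c$. Your only addition is to make explicit the citation of Lee's identification $c=-\sigma(L)$, which the paper leaves implicit.
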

We can also compute $s(L)$ when $L$ has a positive diagram. Recall that $\smoothv$ and $\smootho$ are said to be the
0-resolution and 1-resolution of $\ocrosstext$; the coherent resolution of an oriented diagram is the diagram obtained by performing 
the only orientation preserving resolution at every crossing.
\begin{prop}
 \label{prop:positive}
 Let $(D,{\bf o})$ be a positive diagram, then $$s(D,{\bf o})=-k(D)+c(D)+1$$ where $k(D)$ is the number of circles of the
 coherent resolution of $(D,{\bf o})$ and $c(D)$ is the number of crossing of the diagram.
\end{prop}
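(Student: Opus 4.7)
The equality is proved by matching lower and upper bounds: a direct filtered-degree computation of the Lee canonical cycles for the lower bound, and a saddle cobordism to an unlink combined with the cobordism inequality for the generalized Rasmussen invariant for the upper bound. Denote by $U_k=\bigsqcup^k\bigcirc$ the $k$-component unlink.

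For the lower bound, the positivity of $D$ forces the coherent resolution to coincide with the all-$0$ resolution (each crossing $\ocrosstext$ is resolved to $\smoothv$), so both canonical generators $v({\bf o})$ and $v({\bf -o})$ live in homological degree $0$ in the resolution with $k(D)$ Seifert circles and quantum shift $n_+(D)-2n_-(D)=c(D)$. Following Lee's construction, each circle $C$ is labelled by $v_++\eps_C v_-$ in $v({\bf o})$ and by $v_+-\eps_C v_-$ in $v({\bf -o})$ for signs $\eps_C=\pm 1$ determined by the nesting rule. Expanding both tensor products over subsets $S$ of circles carrying the label $v_-$, and forming $v({\bf o})\pm v({\bf -o})$, the monomial indexed by $S$ survives exactly when $|S|$ is even in the sum and when $|S|$ is odd in the difference, with nonzero coefficient $\pm 2\prod_{C\in S}\eps_C$. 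Such a monomial has quantum grading $c(D)+k(D)-2|S|$, so the filtered degrees of the two chains are $c(D)-k(D)$ and $c(D)-k(D)+2$ in some order. As the filtered degree of a cycle bounds from below the sdeg of its class, summing the two bounds yields $s(D,{\bf o})\geq -k(D)+c(D)+1$.

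For the upper bound, I would build a weak cobordism $\Sigma\colon L\to U_{k(D)}$ by replacing each crossing with a saddle implementing its coherent smoothing; topologically $\Sigma$ is the Seifert surface of $(D,{\bf o})$ with its $k(D)$ Seifert disks removed, so $\chi(\Sigma)=-c(D)$ and each connected component of $\Sigma$ meets both boundary links. Applying the cobordism inequality for the generalized Rasmussen invariant from \cite{Beliakova}, together with $s(U_{k(D)})=1-k(D)$ (computed from the unknot case and \eqref{disjoint_union}), one obtains
\begin{equation*}
s(D,{\bf o})\leq s(U_{k(D)})-\chi(\Sigma)=(1-k(D))+c(D)=-k(D)+c(D)+1,
\end{equation*}
matching the lower bound.

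The main technical subtlety is in the upper bound step: one must invoke the Beliakova--Wehrli cobordism inequality in the multi-component setting, which requires that every connected component of $\Sigma$ have boundary in both $L$ and $U_{k(D)}$. This in turn follows from the fact that each component of $\Sigma$ is an open sub-surface of the Seifert surface containing at least one Seifert disk's neighborhood together with the saddles adjacent to it. The lower bound, by contrast, is a straightforward bookkeeping of quantum gradings once the shape of Lee's canonical generators is recorded.
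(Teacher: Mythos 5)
Your argument is correct, but only its lower-bound half coincides with the paper's proof; the upper bound is obtained by a genuinely different route. The paper gets equality in one stroke from the algebra: a positive diagram has no negative crossings, so the canonical cycles $v({\bf o})\pm v({\bf -o})$ lie in $C^0(D)$ and $\text{Im }\dd_{\text{Lee}}^{-1}=\{0\}$; hence each class has a unique representative and its $\text{sdeg}$ is \emph{exactly} the chain-level filtration degree, which your own grading bookkeeping identifies as $c(D)-k(D)$ and $c(D)-k(D)+2$. In other words, the observation you use only to bound $\text{sdeg}$ from below already pins it down exactly, and no cobordism is needed. Your alternative upper bound --- the saddle cobordism $\Sigma$ from $L$ to $\bigsqcup^{k(D)}\bigcirc$ with $\chi(\Sigma)=-c(D)$, fed into \eqref{cobordismo_s} --- is valid (and your check that every component of $\Sigma$ meets both boundary links is exactly the hypothesis needed for the weak-cobordism inequality); it is in fact the same geometric move the paper later uses to bound $g_*$ of torus links from above. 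The trade-off is that it imports the Beliakova--Wehrli inequality and the value $s(\bigsqcup^{k}\bigcirc)=1-k$, which should be cited via \eqref{s_disjoint_union} (the disjoint-union formula for $s$) rather than \eqref{disjoint_union} (the convolution formula for $d$); since the paper derives the unlink value as a corollary of this very proposition, you should also make explicit, as your parenthetical essentially does, that you obtain it independently from the crossingless unlink diagram, where the differential vanishes.
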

\begin{proof}
 $D$ is positive thus $v({\bf o})\pm v({\bf -o})\in C^0(D)$. Further we have $\text{Im }\dd_{\text{Lee}}^{-1}=\{0\}$ whence 
 $\text{sdeg}[v({\bf o})\pm v({\bf -o})]=\{j,j+2\}$ with $v({\bf o})\pm v({\bf -o})\in C^{i,j}(D)\cup C^{i,j+2}(D)$.
 $\newline$ The shift in the complex $C(D)$ tells us that $j=-k(D)+c(D)$ and the statement follows.
\end{proof}
As an example, for the $n$ component unlink we have $s(\bigsqcup^n\bigcirc)=1-n$.

In \cite{Beliakova} the following properties of $s$ are proved.
\begin{prop}
 Let $L,L_1,L_2$ be links and $n$ be the number of components of $L$. Then the following inequalities hold
 \begin{equation}
 \label{cobordismo_s}
         \left|s(L_1)-s(L_2)\right|\leq-\chi(\Sigma)
 \end{equation} 
 \begin{equation}
 \label{s_disjoint_union}
         s(L_1\sqcup L_2)=s(L_1)+s(L_2)-1
 \end{equation}
 \begin{equation}
 \label{connected_sum}
         s(L_1)+s(L_2)-2\leq s(L_1\sharp L_2)\leq s(L_1)+s(L_2)
 \end{equation}
 \begin{equation}
 \label{mirror}
         2-2n\leq s(L)+s(L^*)\leq 2
 \end{equation}
 where $\Sigma$ is a weak cobordism between $L_1$ and $L_2$, while $L_1\sharp L_2$ stay  
 for connected sum of links.
\end{prop}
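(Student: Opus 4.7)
The plan is to tackle the four inequalities in sequence, in each case working with the canonical generators $v(\mathbf{o}) \pm v(-\mathbf{o})$ that appear in the definition of $s$ and tracking their filtration degrees through specific cobordism maps.

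For \eqref{cobordismo_s}, I would start from Proposition \ref{prop:filtered_homology1}, which provides a filtered map $F_\Sigma : \Hl(L_1) \to \Hl(L_2)$ of degree $\chi(\Sigma)$. The first task is to verify that $F_\Sigma$ sends each combination $[v(\mathbf{o}) \pm v(-\mathbf{o})]$ to a nonzero scalar multiple of the corresponding combination on $L_2$ (with orientations induced by $\Sigma$). This amounts to decomposing $\Sigma$ into elementary handles (births, deaths, saddles) and checking on each piece how the Fr\"obenius multiplication and comultiplication act on the canonical basis. Once the nonvanishing is secured, the filtered degree bound $\text{sdeg}[F_\Sigma(x)] \geq \text{sdeg}(x) + \chi(\Sigma)$ gives $s(L_2) \geq s(L_1) + \chi(\Sigma)$; turning $\Sigma$ upside down yields the reverse inequality.

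For \eqref{s_disjoint_union}, the plan is to exploit the natural identification $C(D_1 \sqcup D_2) \cong C(D_1) \otimes C(D_2)$ as filtered bigraded complexes, under which $v(\mathbf{o}_1 \sqcup \mathbf{o}_2)$ factors as $v(\mathbf{o}_1) \otimes v(\mathbf{o}_2)$. Expanding $v(\mathbf{o}) \pm v(-\mathbf{o})$ in the tensor basis, tracking the filtration on tensor products, and averaging the two resulting sdegs yields the formula. The $-1$ correction is a consequence of the grading normalisation ($s(\bigcirc) = 0$ but $s(\bigcirc \sqcup \bigcirc) = -1$). Inequality \eqref{connected_sum} then follows essentially for free: there is a saddle cobordism (a pair of pants) from $L_1 \sqcup L_2$ to $L_1 \sharp L_2$ with $\chi = -1$, so \eqref{cobordismo_s} gives $|s(L_1 \sqcup L_2) - s(L_1 \sharp L_2)| \leq 1$, and substituting \eqref{s_disjoint_union} produces the double inequality.

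The main obstacle is \eqref{mirror}, which I expect to prove by exhibiting explicit cobordisms and feeding them into \eqref{cobordismo_s}. For the upper bound $s(L) + s(L^*) \leq 2$, the plan is to show that $L \sharp L^*$ bounds a surface in $D^4$ of controllable Euler characteristic built from a Seifert surface $F$ and its mirror: pushing $F$ into one half of $D^4$ and $F^*$ into the other, and joining them by a single band, produces a surface giving an upper estimate on $s(L \sharp L^*)$ that, combined with the lower bound in \eqref{connected_sum}, yields $s(L) + s(L^*) \leq 2$. For the lower bound $s(L) + s(L^*) \geq 2 - 2n$, the plan is to exhibit a weak cobordism between $L \sqcup L^*$ and the $2n$-component unlink $U_{2n}$ of $\chi = 0$, coming from the product cobordism $L \times I$ with appropriate reorientation; applying \eqref{cobordismo_s} together with $s(U_{2n}) = 1 - 2n$ and \eqref{s_disjoint_union} then gives the claim. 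Pinning down these cobordisms and verifying that the Euler characteristics align precisely with the constants $2$ and $2-2n$ is the most delicate part of the argument.
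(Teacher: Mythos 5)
The paper does not actually prove this proposition: it is quoted verbatim from Beliakova--Wehrli, so there is no internal proof to compare against. Judged on its own terms, your treatment of \eqref{cobordismo_s}, \eqref{s_disjoint_union} and \eqref{connected_sum} follows the standard route and is sound in outline. One caveat on \eqref{cobordismo_s}: $F_\Sigma$ does not in general send $[v(\mathbf{o})\pm v(-\mathbf{o})]$ to a multiple of the corresponding combination; it sends $[v(\mathbf{o})]$ to $c[v(\mathbf{o}')]$ and $[v(-\mathbf{o})]$ to $c'[v(-\mathbf{o}')]$ with possibly different nonzero scalars (this uses that every component of $\Sigma$ meets $L_1$, so $\mathbf{o}$ determines $\mathbf{o}'$). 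The argument survives because the image is still a nonzero element of the span of the two canonical classes of $L_2$, whose sdeg is therefore $s(L_2)\pm 1$, and that is all the degree count needs.

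The genuine gap is in \eqref{mirror}. You propose a weak cobordism of $\chi=0$ from $L\sqcup L^*$ to the $2n$-component unlink. No such cobordism exists in general, and if it did, feeding it into \eqref{cobordismo_s} would give $\left|s(L)+s(L^*)-1-(1-2n)\right|\leq 0$, i.e.\ the \emph{equality} $s(L)+s(L^*)=2-2n$. This is false: for the positive Hopf link ($n=2$) one has $s(H)=1$ and $s(H^*)=-1$, so $s(H)+s(H^*)=0\neq -2$. The correct construction is the bent product cobordism: $L\times I$ pushed into $S^3\times I$ gives $n$ disjoint annuli with boundary $L\sqcup\bar{L^*}$ entirely at time $0$ (where $\bar{L^*}$ is the reversed mirror; reversal does not affect $s$). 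This is not yet a weak cobordism in the paper's sense, so puncture each annulus once to create an unknotted circle at time $1$; the result is a weak cobordism from the $2n$-component link $L\sqcup\bar{L^*}$ to $U_n$ (not $U_{2n}$) made of $n$ pairs of pants, with $\chi=-n$. Then \eqref{cobordismo_s} gives $\left|s(L)+s(L^*)-1-(1-n)\right|\leq n$, which unpacks to exactly $2-2n\leq s(L)+s(L^*)\leq 2$; both bounds come from this one cobordism, and your separate upper-bound argument via Seifert surfaces for $L\sharp L^*$ (whose Euler characteristic is not controlled by $n$ alone) is not needed and would not deliver the constant $2$.
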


\subsection{Pseudo-thin links}
Now we are going to introduce a new class of links for which we can use $d_L$ and its properties to improve Inequalities \eqref{connected_sum} and \eqref{mirror}. 
\begin{defin}
 Let $L$ be a non split link. We say that $L$ is pseudo-thin if for all $[v]\in\Hl^0(L)$ we have $\text{sdeg}[v]=c(L)\pm 1$ 
 with $c(L)\in\Z$. This means that $d_L(0,\cdot)$ is supported in two points. 
 A link is pseudo-thin if each of its split components is pseudo-thin.
\end{defin} 
We see that, given an oriented diagram $(D,{\bf o})$ of a pseudo-thin link, we have $s(D,{\bf o'})=c$ for all 
${\bf o'}\in\OO(D)$ such that $\text{lk}({\bf o'},{\bf o})=0$. Moreover, every knot or H-thin link is pseudo-thin.
We have already seen in Section \ref{section:intro} examples of non split pseudo-thin links which are not H-thin.
\subsubsection{Mirror of pseudo-thin links} 
The first proposition we prove is a generalization of Rasmussen's result on knots.
\begin{prop}
 \label{prop:mirror_pseudo}
 If $L$ is a pseudo-thin link then $$s(L^*)=2-2r-s(L)$$ where $r$ is the number of split component of $L$.  
\end{prop}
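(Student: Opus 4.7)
The plan is to handle the non-split case first and then bootstrap to the general pseudo-thin setting via the disjoint union formula \eqref{s_disjoint_union}. Write $L = L_1 \sqcup \cdots \sqcup L_r$ as the disjoint union of its non-split split components; by the definition of pseudo-thinness for split links, each $L_i$ is non-split pseudo-thin, and since mirroring commutes with disjoint union, $L^* = L_1^* \sqcup \cdots \sqcup L_r^*$.

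For the non-split case ($r = 1$), set $c = c(L)$ so that $d_L(0,\cdot)$ is supported on $\{c - 1,\, c + 1\}$. Applying the Pardon mirror identity \eqref{d_mirror} gives $d_{L^*}(0, s) = d_L(0, -s)$, so $d_{L^*}(0,\cdot)$ is supported on $\{-c - 1,\, -c + 1\}$. Thus $L^*$ is again non-split pseudo-thin with $c(L^*) = -c(L)$. Invoking the observation immediately following the definition of pseudo-thinness, namely that $s(M) = c(M)$ for every non-split pseudo-thin $M$, I conclude
$$s(L^*) = c(L^*) = -c(L) = -s(L),$$
which is the desired identity when $r = 1$, since $2 - 2r - s(L) = -s(L)$ in that case.

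For general $r \geq 1$, iterating \eqref{s_disjoint_union} yields
$$s(L) = \sum_{i=1}^r s(L_i) - (r - 1) \qquad \text{and} \qquad s(L^*) = \sum_{i=1}^r s(L_i^*) - (r - 1).$$
By the non-split case applied to each $L_i$, one has $s(L_i^*) = -s(L_i)$, so
$$s(L^*) = -\sum_{i=1}^r s(L_i) - (r - 1) = -\bigl(s(L) + (r - 1)\bigr) - (r - 1) = 2 - 2r - s(L),$$
which is the required formula.

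The only genuinely non-trivial input is the identity $s(L) = c(L)$ for non-split pseudo-thin $L$. Pseudo-thinness only asserts that every class in $\Hl^0(L)$ has filtration degree in $\{c - 1,\, c + 1\}$; to compute $s(L)$ one must control the filtration degrees of the specific canonical cycles $[v({\bf o}) \pm v(-{\bf o})]$ and show they realise both of these values. The relevant structural fact, going back to Beliakova and Wehrli, is that for non-split links these two sdegs differ by exactly $2$; together with pseudo-thinness placing both in $\{c - 1,\, c + 1\}$, this forces $\{s_+, s_-\} = \{c - 1,\, c + 1\}$ and hence $s(L) = c(L)$. Once this is granted, everything else is formal manipulation with \eqref{d_mirror} and \eqref{s_disjoint_union}.
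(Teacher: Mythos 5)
Your proposal is correct and follows essentially the same route as the paper: the non-split case via Pardon's mirror identity \eqref{d_mirror} forcing the support of $d_{L^*}(0,\cdot)$ to be $\{-s(L)\pm 1\}$, and the general case by combining this with the disjoint union formula \eqref{s_disjoint_union} (you iterate it directly where the paper phrases it as an induction on $r$, a purely cosmetic difference). Your closing remark making explicit why $s(M)=c(M)$ for non-split pseudo-thin $M$ is a point the paper leaves implicit in the observation following the definition, but both arguments rest on the same facts.
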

\begin{proof}
 We proceed by induction on $r$. 
 \begin{itemize}
  \item $r=1$
  
        This is the non split case.
        
        $d_{L}(0,s)$ is non zero only for $s=s(L)\pm 1$ since $L$ is pseudo-thin. From \eqref{d_mirror} we see that 
        $d_{L^*}(0,s)=d_L(0,-s)$ thus $d_{L^*}(0,s)$ is non zero only for $s=-s(L)\pm 1$ and, from this, we have
        $$s(L^*)=-s(L)\:.$$
  \item $r\geq 2$
  
        $L=L_1\sqcup L_2$. We can suppose $L_1$ is non split and $L_2$ has $r-1$ split components. By Equation \eqref{s_disjoint_union}
        and the inductive hypothesis we have these equalities: \newpage
        $$\begin{aligned}
           s(L^*)&=s(L_1^*)+s(L_2^*)-1=-s(L_1)+4-2r-s(L_2)-1=\\
           &=2-2r-(s(L_1)+s(L_2)-1)=2-2r-s(L)\:.\\
          \end{aligned}$$        
 \end{itemize}
\end{proof}
Since H-thin links are non split (\cite{Manolescu}) we have that $s(L^*)=-s(L)$ for every $L$ H-thin.
In Section \ref{section:examples} we will see that $L10_{36}^n$ on Thistlethwaite's table is a non split non 
pseudo-thin link for which the previous property is false. 

$\newline$ Checking whether a link is pseudo-thin requires computing filtered Lee homology or at least Khovanov homology. In the
following proposition we give some geometric conditions that ensure that a link belongs to this class. 
\begin{prop}
 \label{prop:conditions}
 Let $(L,{\bf o})$ be a non split link; then if $(L,{\bf o})$ is quasi-alternating or 
 $$\text{lk}({\bf o'},{\bf o})\neq 0\:\:\:\:\:\forall{\bf o'}\in\OO(L)\setminus{\bf \pm o}$$
 we have that $(L,{\bf o})$ is pseudo-thin. 
\end{prop}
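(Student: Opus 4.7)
The plan is to treat the two sufficient conditions separately, since they call for different inputs.

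For the orientation condition, I would first invoke the earlier proposition of Lee quoted in the paper: $\Hl^0(L)$ is spanned by the canonical generators $[v(\mathbf{o'})]$ with $\mathrm{lk}(\mathbf{o'},\mathbf{o})=0$. Under the hypothesis, the set of such $\mathbf{o'}$ is exactly $\{\mathbf{o},-\mathbf{o}\}$, so $\dim\Hl^0(L)=2$, with the alternative basis $\{[v(\mathbf{o})+v(-\mathbf{o})],\,[v(\mathbf{o})-v(-\mathbf{o})]\}$. By the definition of $s(L)$ the sdegs of these two classes average to $s(L)$, and the key nontrivial input is that they differ by exactly $2$, so they equal $s(L)-1$ and $s(L)+1$. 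Granting this, every nonzero class in $\Hl^0(L)$ is a linear combination $a[v(\mathbf{o})+v(-\mathbf{o})]+b[v(\mathbf{o})-v(-\mathbf{o})]$, and because the sdeg of such a combination is the minimum of the sdegs of its nonzero summands (an immediate filtration argument: a class of lower sdeg cannot be cancelled by adding a class of strictly higher sdeg), the sdeg always lies in $\{s(L)-1,s(L)+1\}$. This is precisely the pseudo-thin condition.

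For the quasi-alternating condition, I would invoke the theorem of Manolescu and Ozsv\'ath that quasi-alternating links are Khovanov thin: there is an integer $c$ such that $H^{i,j}(L)=0$ unless $j=2i+c\pm 1$. The Lee differential is a filtered perturbation of the Khovanov differential, producing a spectral sequence whose $E_2$-page is $H^{*,*}(L)$ and which converges to the associated graded of $\mathcal{F}\Hl(L)$. Consequently
\[
d_L(i,s)=\dim\frac{\mathcal{F}^sH^i_{\text{Lee}}(L)}{\mathcal{F}^{s+1}H^i_{\text{Lee}}(L)}\leq\dim H^{i,s}(L)\qquad\text{for every }i,s\in\Z.
\]
Specialising to $i=0$, the support of $d_L(0,\cdot)$ is contained in the support of $H^{0,\cdot}(L)$, which by thinness is $\{c-1,c+1\}$. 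Hence $d_L(0,\cdot)$ is supported in at most two points, and $L$ is pseudo-thin.

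The main obstacle, therefore, is the fact used in the first case that $\mathrm{sdeg}[v(\mathbf{o})+v(-\mathbf{o})]$ and $\mathrm{sdeg}[v(\mathbf{o})-v(-\mathbf{o})]$ differ by exactly $2$. For knots this is Rasmussen's theorem; averaging alone only guarantees that the two values have the same parity, so one really needs a mechanism shifting quantum degree by $\pm 2$ between them. I would establish the link version by adapting Rasmussen's argument, using Lee's degree-$\pm 2$ involution on the Lee complex (which exchanges the canonical generators $v(\mathbf{o'})$ and $v(-\mathbf{o'})$ up to lower-order terms) in the framework set up by Beliakova--Wehrli. Once this filtered computation is in place, both halves of Proposition~\ref{prop:conditions} reduce to the assembly sketched above.
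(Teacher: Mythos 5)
Your proposal is correct and follows essentially the same route as the paper: quasi-alternating $\Rightarrow$ H-thin by Manolescu--Ozsv\'ath, and in the other case $\dim\Hl^0(L)=2$ forces the support of $d_L(0,\cdot)$ to be $\{s(L)-1,s(L)+1\}$. The one step you flag as the ``main obstacle'' --- that the sdegs of $[v(\mathbf{o})+v(-\mathbf{o})]$ and $[v(\mathbf{o})-v(-\mathbf{o})]$ differ by exactly $2$ --- is precisely what the paper imports wholesale from Beliakova--Wehrli rather than reproving, so your outline just makes explicit what the paper leaves as a citation.
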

\begin{proof}
 In \cite{Manolescu}, Manolescu and Ozsv\'ath show that a quasi-alternating link is \\ H-thin while, in the other case,
 we have $$\text{dim }\Hl^0(L,{\bf o})=2$$ and, as we already know from \cite{Beliakova}, this is enough to conlude that
 $d_{(L,{\bf o})}(0,s)$ is non zero only if $s=s(L,{\bf o})\pm 1$.
\end{proof}
Positive or negative links satisfy the second condition of Proposition \ref{prop:conditions} so that they are pseudo-thin. 
Then we have the following formula for negative links.
\begin{cor}
 If $(D,{\bf o})$ is a negative diagram
 $$\begin{aligned}
   s(D,{\bf o})&=2-2r-s(D^*,{\bf o})=\\
   &=2-2r-(-k(D)+c(D)+1)=k(D)-c(D)+1-2r\\
  \end{aligned}$$
\end{cor}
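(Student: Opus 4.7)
The plan is to chain together three results established just above. Since $(D,{\bf o})$ is negative, its mirror $(D^*,{\bf o})$ is a positive diagram. The remark preceding the corollary observes that negative links are pseudo-thin, since they satisfy the linking-number condition of Proposition~\ref{prop:conditions} (every non-trivial reorientation changes some crossing signs, so $\text{lk}({\bf o'},{\bf o})\neq 0$). Hence Proposition~\ref{prop:mirror_pseudo} applies to the link represented by $D$ and gives
$$s(D,{\bf o}) \;=\; 2-2r-s(D^*,{\bf o}),$$
which is the first equality in the displayed formula.

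For the second equality I would then invoke Proposition~\ref{prop:positive} on the positive diagram $(D^*,{\bf o})$, obtaining
$$s(D^*,{\bf o}) \;=\; -k(D^*)+c(D^*)+1.$$
Substituting into the previous display yields $s(D,{\bf o})=2-2r-(-k(D^*)+c(D^*)+1)=k(D^*)-c(D^*)+1-2r$, and it remains only to check that the combinatorial quantities $c$ and $k$ are mirror-invariant. This is immediate: $c(D^*)=c(D)$ because mirroring does not create or destroy crossings, and $k(D^*)=k(D)$ because the coherent resolution at any crossing is determined solely by the orientations of the four incident strands, which are unchanged by swapping the over-strand with the under-strand.

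The proof is then just a substitution of $k(D^*)=k(D)$ and $c(D^*)=c(D)$ into the previous line, producing exactly $k(D)-c(D)+1-2r$. The main potential obstacle is verifying the mirror-invariance of $k(D)$, but as noted above this is a one-line observation from the definition of the coherent resolution; the substantive content of the corollary is entirely imported from Propositions~\ref{prop:mirror_pseudo} and~\ref{prop:positive}.
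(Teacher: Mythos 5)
Your proposal is correct and follows exactly the paper's route: negative diagrams are pseudo-thin by the linking-number condition of Proposition~\ref{prop:conditions}, the first equality is Proposition~\ref{prop:mirror_pseudo}, and the second is Proposition~\ref{prop:positive} applied to the positive mirror together with the observations $k(D^*)=k(D)$ and $c(D^*)=c(D)$, which the paper itself records immediately after the corollary. Nothing is missing.
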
  
where the second equality holds because $k(D^*)=k(D)$ and $c(D^*)=c(D)$. Notice that $r$ is the number of split component of the link 
represented by $D$.
\subsubsection{Connected sum}
Equation \eqref{connected_sum} tells us that $s$ is not additive under connected sum of links, but we can show that this is true for
pseudo-thin links.
\begin{prop}
 \label{prop:connected_sum}
 If $L_1$, $L_2$ and $L_1\sharp L_2$ are pseudo-thin then $s(L_1\sharp L_2)=s(L_1)+s(L_2)$. 
\end{prop}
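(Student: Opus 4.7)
The plan is to pinch $s(L_1\sharp L_2)$ between the upper bound already supplied by \eqref{connected_sum} and a matching lower bound produced by applying the mirror formula (Proposition \ref{prop:mirror_pseudo}) to all three pseudo-thin links in sight, together with the observation that mirror commutes with connected sum.

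First I would record how the number of split components behaves. If $L_i$ has $r_i$ split components for $i=1,2$, then performing a connected sum along one chosen component of $L_1$ and one of $L_2$ merges exactly two split components into a single one, so $L_1\sharp L_2$ has $r=r_1+r_2-1$ split components. Under the hypothesis that $L_1$, $L_2$ and $L_1\sharp L_2$ are pseudo-thin, Proposition \ref{prop:mirror_pseudo} therefore gives the three identities
\begin{equation*}
s(L_1^*)=2-2r_1-s(L_1),\qquad s(L_2^*)=2-2r_2-s(L_2),
\end{equation*}
\begin{equation*}
s\bigl((L_1\sharp L_2)^*\bigr)=2-2(r_1+r_2-1)-s(L_1\sharp L_2)=4-2r_1-2r_2-s(L_1\sharp L_2).
\end{equation*}

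Next I would use that $(L_1\sharp L_2)^*=L_1^*\sharp L_2^*$ and apply the upper bound half of \eqref{connected_sum} to this mirrored connected sum (which does \emph{not} require $L_1^*\sharp L_2^*$ to be pseudo-thin, since \eqref{connected_sum} holds for arbitrary links):
\begin{equation*}
s(L_1^*\sharp L_2^*)\leq s(L_1^*)+s(L_2^*)=4-2r_1-2r_2-s(L_1)-s(L_2).
\end{equation*}
Substituting the expression for $s((L_1\sharp L_2)^*)$ from the previous paragraph and cancelling $4-2r_1-2r_2$ on both sides yields $s(L_1\sharp L_2)\geq s(L_1)+s(L_2)$, which combined with the upper bound $s(L_1\sharp L_2)\leq s(L_1)+s(L_2)$ from \eqref{connected_sum} gives the claimed equality.

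There is essentially no serious obstacle here; the only point deserving a sanity check is the split-component count under connected sum, which is what makes the constants $2-2r_i$ and $2-2(r_1+r_2-1)$ combine exactly to cancel in the final step. The proof relies crucially on the fact that Proposition \ref{prop:mirror_pseudo} is a genuine equality rather than an inequality, so the two halves of \eqref{connected_sum}, one applied to $L_1\sharp L_2$ and one to its mirror, are forced to be simultaneously sharp.
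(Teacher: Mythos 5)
Your proof is correct and follows essentially the same route as the paper: apply the upper bound of \eqref{connected_sum} to $L_1^*\sharp L_2^*$, convert everything via Proposition \ref{prop:mirror_pseudo} using the split-component count $r=r_1+r_2-1$, and let the constants cancel to produce the reverse inequality. Your write-up is, if anything, slightly more careful than the paper's in making the split-component bookkeeping explicit.
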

\begin{proof}
 We already know that $$s(L_1\sharp L_2)\leq s(L_1)+s(L_2)$$ and therefore also that
 $$s(L_1^*\sharp L_2^*)\leq s(L_1^*)+s(L_2^*)\:.$$ 
 Since $L_1 $ and $L_2$ are pseudo-thin then $s(L^*_i)=2-2r_i-s(L_i)$ for every $i$, thus
 $$2-2(r_1+r_2-1)-s(L_1\sharp L_2)\leq 2-2r_1-s(L_1)+2-2r_2-s(L_2)$$ 
 the previous expression becomes
 $$s(L_1\sharp L_2)\geq s(L_1)+s(L_2)$$ and we conclude.
\end{proof}
This proposition applies to knots, for which additivity of connected sum is proved in \cite{Rasmussen}, and
H-thin links.

About the $d$ invariant we have the following formula.
\newpage 
\begin{prop}
 Let $(L_1,{\bf o_1})$ and $(L_2,{\bf o_2})$ be H-thin links then
 $$\begin{aligned}
    d_{(L_1\sharp L_2,{\bf o_1\sharp o_2})}(i,s\pm 1)&=\Biggl|\Biggl\{\bigl((i_1,s_1),(i_2,s_2)\bigr)\in\Z^2\times\Z^2\:|\\
    &|\:i_1+i_2=i, s_1+s_2=s\text{ and }d_{L_j}(i_j,s_j\pm 1)>0\:\forall j\Biggr\}\Biggr|\\
   \end{aligned}$$
 and $0$ otherwise.   
\end{prop}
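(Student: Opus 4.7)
The plan is to combine the convolution formula~\eqref{disjoint_union} for disjoint unions with a careful analysis of the two saddle cobordisms relating $L_1\sqcup L_2$ and $L_1\sharp L_2$, using H-thinness of the factors to control the resulting filtration shifts.

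First I would invoke Equation~\eqref{disjoint_union} to write
$$d_{L_1\sqcup L_2}(i,s)=\ssum_{\substack{i_1+i_2=i \\ s_1+s_2=s}} d_{L_1}(i_1,s_1)\,d_{L_2}(i_2,s_2).$$
Since both $L_j$ are H-thin, the only positions contributing to this convolution lie on the two diagonals $s_j=2i_j+s(L_j)\pm 1$, which already carves out precisely the indexing set appearing on the right-hand side of the claimed formula.

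Next I would bring in the two saddle cobordisms $\Sigma:L_1\sqcup L_2\to L_1\sharp L_2$ and $\Sigma^{\vee}:L_1\sharp L_2\to L_1\sqcup L_2$, each of Euler characteristic $-1$. By Proposition~\ref{prop:filtered_homology1} they induce maps $F$ and $G$ on Lee homology which are filtered of degree $-1$. Under the identification $\Hl(L_1\sqcup L_2)\cong \Hl(L_1)\otimes\Hl(L_2)$, the map $F$ acts as Lee--Fr\"obenius multiplication on the two strands being joined: on a canonical generator $[v({\bf o_1'})]\otimes[v({\bf o_2'})]$, it produces a canonical generator of $\Hl(L_1\sharp L_2)$ when the two incoming orientations agree along the saddle arc, and $0$ otherwise. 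Combining this with Proposition~\ref{prop:connected_sum} (which, once $L_1\sharp L_2$ is shown to be pseudo-thin, gives $s(L_1\sharp L_2)=s(L_1)+s(L_2)$), I would deduce that $F$ realizes each of the diagonals $s=2i+s(L_1)+s(L_2)\pm 1$ of $\Hl(L_1\sharp L_2)$ as the degree-$(-1)$ image of one of the two extremal diagonals of $\Hl(L_1\sqcup L_2)$.

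Finally, counting the surviving canonical generators of $\Hl(L_1\sharp L_2)$ at each filtration level $(i,s\pm 1)$ matches exactly the cardinality on the right-hand side of the claimed formula, with the ``$\pm$'' on both sides tracking whether the contributing pure-tensor comes from the upper or the lower diagonal of $d_{L_j}$. The main obstacle is the filtration bookkeeping in the middle step: one must verify both that $F$ kills precisely those pure-tensor canonical generators whose orientations disagree across the saddle, and that it shifts the remaining ones by exactly $-1$, neither collapsing distinct filtration levels nor skipping any. This is essentially a local calculation at the saddle, but H-thinness of the factors is the hypothesis that transfers this local behaviour to the global filtration structure and so makes the cardinality count survive intact.
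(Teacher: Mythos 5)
Your argument takes a genuinely different route from the paper's --- the paper never introduces a cobordism in this proof --- but the route has a gap at exactly the step you flag as the ``main obstacle'', and that step is not repairable as stated. Proposition \ref{prop:filtered_homology1} gives only a one-sided constraint: a map filtered of degree $-1$ satisfies $\text{sdeg}\,F(x)\geq\text{sdeg}(x)-1$, and composing with the reverse saddle (a tube, inducing a nonzero multiple of the identity on canonical generators) only improves this to $\text{sdeg}\,F(x)\leq\text{sdeg}(x)+1$. So nothing forces the shift to be exactly $-1$, and in fact it is not, already for two knots: every canonical generator $[v({\bf o})]$ of a knot $K$ sits at the \emph{minimal} filtration level $\text{sdeg}=s(K)-1$, so $\text{sdeg}\bigl([v({\bf o_1'})]\otimes[v({\bf o_2'})]\bigr)=s(K_1)+s(K_2)-2$, while its image $[v({\bf o_1'\sharp o_2'})]$ has $\text{sdeg}=s(K_1\sharp K_2)-1=s(K_1)+s(K_2)-1$, a shift of $+1$. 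There is a second, related problem: $d$ records the graded dimensions of the filtration, which are realized by linear combinations of canonical generators such as $[v({\bf o})\pm v({\bf -o})]$, not by the generators themselves, which all lie at the bottom of the filtration in their homological degree. So even a complete determination of $\text{sdeg}$ of each $F\bigl([v({\bf o_1'})]\otimes[v({\bf o_2'})]\bigr)$ would not yet compute $d_{L_1\sharp L_2}$.

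The paper's proof avoids the cobordism entirely: the canonical generators of $\Hl(L_1\sharp L_2)$ are indexed by compatible pairs of orientations $({\bf o_1'},{\bf o_2'})$, the homological degree of $[v({\bf o_1'\sharp o_2'})]$ is $i_1+i_2$ by additivity of $\text{lk}$, and since the $d$-invariant of the connected sum is supported on the two diagonals $s=2i+s(L_1\sharp L_2)\pm 1$ with $s(L_1\sharp L_2)=s(L_1)+s(L_2)$ by Proposition \ref{prop:connected_sum}, each such generator contributes exactly at $(i_1+i_2,\,s_1+s_2\pm 1)$; a count of the $2^{n_1+n_2-2}$ pairs finishes the proof. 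Your convolution formula for $d_{L_1\sqcup L_2}$ is correct but is not what is needed; the input you are missing is the support-on-two-diagonals constraint for $L_1\sharp L_2$ itself, which is where thinness does the real work.
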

\begin{proof}
 First we recall that a H-thin link $L$ has $d_{L}$ invariant supported in $s=2i+s(L)\pm 1$. There are $2^{n-1}$ pairs
 $\bigl(i,s\pm 1=2i+s(L)\pm 1\bigr)$ counted with multiplicity, where $n$ is the number of components of $L$, and 
 each one increases by one the value of $d_{L}(i,s\pm 1)$.
 $\newline$ Let $(i_1,s_1\pm 1)$ be such a pair for $L_1$ and $(i_2,s_2\pm 1)$ be another one for $L_2$: this means that there exist
 ${\bf o_1'}$ and ${\bf o_2'}$ orientations of $L_1$ and $L_2$ such that $2\cdot\text{lk}({\bf o_j'},{\bf o_j})=i_j$ for 
 every $j$. Now we have that the orientation ${\bf o_1'\sharp o_2'}$ of $L_1\sharp L_2$ gives 
 $$i=2\cdot\text{lk}({\bf o_1'\sharp o_2'},{\bf o_1\sharp o_2})=2\cdot\text{lk}({\bf o_1'},{\bf o_1})+2\cdot\text{lk}({\bf o_2'},{\bf o_2})=i_1+i_2$$
 and it corresponds to the pair
 $$\bigl(i=i_1+i_2,s\pm 1=2i+s(L_1\sharp L_2)\pm 1=2i_1+2i_2+s(L_1)+s(L_2)\pm 1=s_1+s_2\pm 1\bigr)$$
 where we have used Proposition \ref{prop:connected_sum}. Finally, we note that all of the $2^{n_1+n_2-2}$ pairs of 
 $L_1\sharp L_2$ are obtained in this way, in fact $L_1\sharp L_2$ has $n_1+n_2-1$ components.
 $\newline$ The proof is completed.
\end{proof}
This result gives the following theorem.
\begin{teo}
 If $L_1$,$L_2$ and $L_1\sharp L_2$ are pseudo-thin oriented links then the value of $s(L_1\sharp L_2)$ is independent from 
 the choice of components used to perform the connected sum. 
 $\newline$ If, in addition, $L_1$ and $L_2$ are H-thin then this is true for $d_{L_1\sharp L_2}$ as well.
\end{teo}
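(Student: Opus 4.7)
The plan is to derive both assertions as essentially immediate consequences of the two results established just before the theorem, namely Proposition \ref{prop:connected_sum} and the preceding convolution-type formula for $d_{L_1\sharp L_2}$. In each case, the key observation is that the right-hand side of the relevant formula depends only on $L_1$ and $L_2$ separately, and not on the pair of components used to perform the connected sum; invariance of the left-hand side then follows for free.

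For the $s$-invariant, since $L_1$, $L_2$ and $L_1\sharp L_2$ are all pseudo-thin by hypothesis, Proposition \ref{prop:connected_sum} gives
\begin{equation*}
s(L_1\sharp L_2) = s(L_1) + s(L_2)
\end{equation*}
for any admissible choice of components used in the connected sum. The right-hand side is visibly independent of that choice, so the left-hand side must be too.

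For $d_{L_1\sharp L_2}$, under the additional assumption that $L_1$ and $L_2$ are H-thin, the preceding proposition writes $d_{L_1\sharp L_2}(i, s\pm 1)$ as the cardinality of the set of pairs $((i_1, s_1), (i_2, s_2)) \in \Z^2 \times \Z^2$ satisfying $i_1 + i_2 = i$, $s_1 + s_2 = s$ and $d_{L_j}(i_j, s_j \pm 1) > 0$ for each $j=1,2$, and declares $d_{L_1\sharp L_2}$ to vanish at all other bidegrees. Since this count is a function only of the invariants $d_{L_1}$ and $d_{L_2}$ of the summands, it is insensitive to the choice of components being glued, and the conclusion follows.

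There is no real obstacle here: the theorem is genuinely a corollary of additivity of $s$ on pseudo-thin links and of the explicit description of $d_{L_1\sharp L_2}$ in the H-thin case. The only mild point worth flagging is that the first statement tacitly requires $L_1\sharp L_2$ to remain pseudo-thin for every choice of joined components (which is built into the hypothesis), and the second relies on the formula of the preceding proposition being applicable regardless of which components one connects, as indeed is the case since its proof uses only the $s$-additivity established above.
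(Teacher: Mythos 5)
Your proposal is correct and matches the paper's own (implicit) argument exactly: the paper derives the theorem directly from Proposition \ref{prop:connected_sum} and the convolution-type formula for $d_{L_1\sharp L_2}$, observing in both cases that the right-hand side depends only on the summands and not on the chosen components. Your remark that the pseudo-thinness of $L_1\sharp L_2$ must hold for each choice of joined components is a sensible clarification of the hypothesis.
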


\subsection{Improving Lobb's inequality}
Andrew Lobb in \cite{Lobb}, working on Rasmussen invariant, found an upper and a lower bound for the value of $s(D,{\bf o})$, 
which depend
from the link diagram $D$. Now we want to show that, for pseudo-thin links, we can give a better lower bound.
This also allows us to give a better lower bound for slice genus of links.
\begin{defin}
Let $(D,{\bf o})$ be an oriented diagram of a link. Then we define $U(D,{\bf o})$ and $V(D,{\bf o})$ as
$$U(D,{\bf o})=k(D,{\bf o})+w(D,{\bf o})-2S^-(D,{\bf o})+1$$
$$V(D,{\bf o})=-k(D,{\bf o})+w(D,{\bf o})+2S^+(D,{\bf o})-1$$
where
 \begin{itemize}
  \item $k(D,{\bf o})$ is the number of circles of the coherent resolution of $(D,{\bf o})$.
  \item $w(D,{\bf o})$ is the writhe of the diagram.
  \item $S^{\pm}(D,{\bf o})$ is the number of connected components of the graph obtained in the following way: for each circle
        of the coherent resolution of $(D,{\bf o})$ we have a vertex, and we connect two vertices with an edge if and only if they correspond to
        circles connected by at least a positive (negative) crossing.
 \end{itemize}
\end{defin}
The following figure shows the computation of $U$ and $V$ for the figure eight knot. 
Lobb's inequalities are formulated in terms of $U$ and $V$.
\begin{figure}[H]
  \centering
  \includegraphics[width=0.6\textwidth]{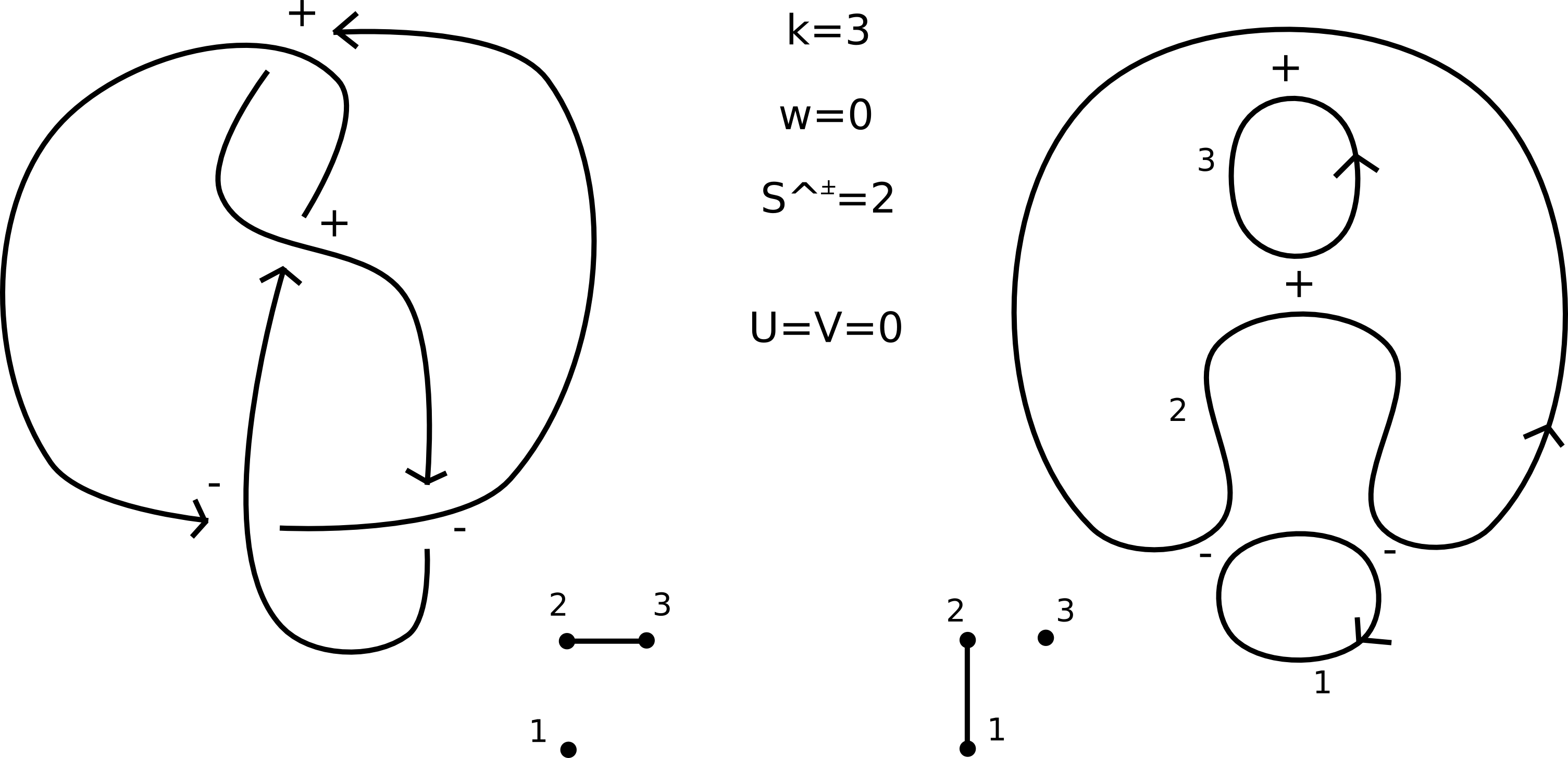} 
  \caption{}
\end{figure}
\begin{teo}
 For every oriented diagram of a link is 
 \begin{equation}
  \label{sup_s}
  U(D,{\bf o})\geq s(D,{\bf o})
 \end{equation}
 and
 \begin{equation*}
  s(D,{\bf o})\geq 2-2n+V(D,{\bf o})
 \end{equation*}
  where $n$ is the number of components of the link.
\end{teo}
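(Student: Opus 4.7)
The plan is to first establish the upper bound $U(D,{\bf o}) \geq s(D,{\bf o})$ via a direct cobordism construction, and then to deduce the lower bound by applying the upper bound to the mirror diagram together with the mirror inequality \eqref{mirror}.

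For the upper bound, I consider the weak cobordism $\Sigma$ obtained by performing an oriented saddle move (i.e. the orientation-preserving resolution) at every positive crossing of $(D,{\bf o})$, while leaving the negative crossings untouched. This yields an oriented link $L_-$ whose diagram $D_-$ has the same $k(D,{\bf o})$ Seifert circles as $D$, only the $c_-(D,{\bf o})$ original negative crossings, and exactly $S^-(D,{\bf o})$ split components: this last count arises because disjoint components of $D_-$ on $S^2$ correspond precisely to the connected components of the graph whose vertices are the Seifert circles and whose edges come from negative crossings. Since $D_-$ is a negative diagram, the corollary following Proposition \ref{prop:conditions} gives
$$s(L_-) = k(D,{\bf o}) - c_-(D,{\bf o}) + 1 - 2S^-(D,{\bf o}).$$
The cobordism $\Sigma$ has Euler characteristic $-c_+(D,{\bf o})$, so inequality \eqref{cobordismo_s} yields
$$s(D,{\bf o}) \leq s(L_-) + c_+(D,{\bf o}) = k(D,{\bf o}) + w(D,{\bf o}) - 2S^-(D,{\bf o}) + 1 = U(D,{\bf o}).$$

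For the lower bound, I apply the upper bound just proven to the mirror diagram $D^*$ equipped with the induced orientation. Since mirror reflection preserves the set of Seifert circles, negates the writhe, and interchanges the roles of positive and negative crossings, one has $U(D^*,{\bf o}) = k(D,{\bf o}) - w(D,{\bf o}) - 2S^+(D,{\bf o}) + 1 = -V(D,{\bf o})$. Hence $s((D^*,{\bf o})) \leq -V(D,{\bf o})$, and combining with \eqref{mirror} gives
$$s(D,{\bf o}) \geq 2 - 2n - s((D^*,{\bf o})) \geq 2 - 2n + V(D,{\bf o}),$$
which is the desired inequality.

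The main obstacle is the combinatorial-topological claim that simultaneously performing oriented saddles at all positive crossings produces a negative link diagram whose number of split components is exactly $S^-(D,{\bf o})$. This reduces to the observation that, after these saddles, the underlying planar diagram consists of the original Seifert circles reconnected only along the negative crossings of $D$; disjoint components on $S^2$ are therefore in bijection with connected components of the negative-crossing Seifert graph, and these planar pieces correspond to bona fide split components of $L_-$ in $S^3$.
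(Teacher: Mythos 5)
Your derivation of the lower bound is exactly the paper's: the identity $V(D,{\bf o})=-U(D^*,{\bf o})$ combined with inequality \eqref{mirror} applied to the mirror diagram. Where you genuinely diverge is on the upper bound: the paper simply cites \cite{Lobb} for $U(D,{\bf o})\geq s(D,{\bf o})$, whereas you give a self-contained argument by performing the oriented resolution at every positive crossing, landing on a negative diagram $D_-$ whose $s$-value is given by the corollary on negative diagrams, and then applying the cobordism inequality \eqref{cobordismo_s} to the genus-zero cobordism consisting of one saddle for each positive crossing, so $\chi(\Sigma)=-c_+(D,{\bf o})$. This argument is sound and has the virtue of deducing both of Lobb's bounds from results already established in the paper (Proposition \ref{prop:positive}, Proposition \ref{prop:mirror_pseudo} and \eqref{cobordismo_s}), at the price of relying on the pseudo-thinness of negative links. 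One small overstatement: the link $L_-$ has \emph{at least} $S^-(D,{\bf o})$ split components. The connected components of the planar diagram $D_-$ are indeed in bijection with the components of the negative Seifert graph, and each yields a split piece of $L_-$, but a connected sub-diagram could in principle represent a link that splits further, so you should not claim equality. This does not damage the proof: writing $r$ for the true number of split components of $L_-$, the corollary gives $s(L_-)=k(D,{\bf o})-c_-(D,{\bf o})+1-2r\leq k(D,{\bf o})-c_-(D,{\bf o})+1-2S^-(D,{\bf o})$, and the chain $s(D,{\bf o})\leq s(L_-)+c_+(D,{\bf o})\leq U(D,{\bf o})$ goes through unchanged.
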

\begin{proof}
The first inequality is proved in \cite{Lobb}.
Now, it is trivial that $$V(D,{\bf o})=-U(D^*,{\bf o})$$ then
$$U(D^*,{\bf o})\geq s(D^*,{\bf o})$$ and from Equation \eqref{mirror}
$$-V(D,{\bf o})\geq 2-2n-s(D,{\bf o})$$ that is equivalent to 
$$s(D,{\bf o})\geq 2-2n+V(D,{\bf o})$$
which proves the lower bound in \cite{Lobb}.
\end{proof}
Our result improves the second inequality in terms of the number $r$ of split components of the link.
\begin{teo}
 \label{teo:lower_bound}
 If $(D,{\bf o})$ is the diagram of a pseudo-thin link, then Inequality \eqref{inf_s} $$s(D,{\bf o})\geq 2-2r+V(D,{\bf o})$$ 
 holds.
\end{teo}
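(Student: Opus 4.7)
The plan is to mimic the author's own derivation of the weaker bound $s(D,\mathbf{o})\geq 2-2n+V(D,\mathbf{o})$ (which combined Lobb's upper bound \eqref{sup_s} applied to the mirror with the mirror inequality \eqref{mirror}), but to replace the inequality \eqref{mirror} with the sharper equality supplied by Proposition \ref{prop:mirror_pseudo} for pseudo-thin links.

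First I would observe that pseudo-thinness passes to the mirror. Indeed, from the mirror symmetry \eqref{d_mirror} we have $d_{L^*}(0,s)=d_L(0,-s)$, so $d_{L^*}(0,\cdot)$ is supported on two points precisely when $d_L(0,\cdot)$ is, and the number of split components is preserved under mirroring. Hence if $L$ (represented by $(D,\mathbf{o})$) is pseudo-thin with $r$ split components, then so is $L^*$, and Proposition \ref{prop:mirror_pseudo} gives the equality
\begin{equation*}
s(D^*,\mathbf{o})=2-2r-s(D,\mathbf{o}).
\end{equation*}

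Next I would apply Lobb's upper bound \eqref{sup_s} to the mirror diagram $(D^*,\mathbf{o})$, obtaining $U(D^*,\mathbf{o})\geq s(D^*,\mathbf{o})$. Since by definition $V(D,\mathbf{o})=-U(D^*,\mathbf{o})$ (as already noted in the proof of the preceding theorem), substituting the equality above yields
\begin{equation*}
-V(D,\mathbf{o})=U(D^*,\mathbf{o})\geq s(D^*,\mathbf{o})=2-2r-s(D,\mathbf{o}),
\end{equation*}
which rearranges immediately to $s(D,\mathbf{o})\geq 2-2r+V(D,\mathbf{o})$.

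There is no real obstacle: once Proposition \ref{prop:mirror_pseudo} is in hand, the argument is a one-line modification of the existing Lobb-mirror trick, with the gain $2n-2r$ coming entirely from the replacement of the two-sided inequality $s(L)+s(L^*)\geq 2-2n$ by the exact value $s(L)+s(L^*)=2-2r$ available under the pseudo-thin hypothesis. The substantive work was done earlier in establishing Proposition \ref{prop:mirror_pseudo} via the Pardon invariant.
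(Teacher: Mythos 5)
Your argument is exactly the paper's own proof: apply Lobb's upper bound \eqref{sup_s} to the mirror diagram, use $V(D,{\bf o})=-U(D^*,{\bf o})$, and substitute the equality $s(D^*,{\bf o})=2-2r-s(D,{\bf o})$ from Proposition \ref{prop:mirror_pseudo} in place of the weaker two-sided inequality \eqref{mirror}. The preliminary observation that pseudo-thinness is preserved under mirroring is harmless but not needed, since Proposition \ref{prop:mirror_pseudo} only requires $L$ itself to be pseudo-thin.
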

\begin{proof}
 It is enough to observe that
 $$U(D^*,{\bf o})=-V(D,{\bf o})\geq s(D^*,{\bf o})=2-2r-s(D,{\bf o})$$ where the last equality follows from \ref{prop:mirror_pseudo}.
\end{proof}
In addition, for non split alternating links, these inequalities are indeed equalities. The 
proof is identical
to the one in \cite{Lobb} for knots.
\begin{prop}
 If $(D,{\bf o})$ is a non split alternating diagram then \\ $U(D,{\bf o})=V(D,{\bf o})$ and consequently, by Theorem 
 \ref{teo:lower_bound}, $U(D,{\bf o})=s(D,{\bf o})=V(D,{\bf o})$, that is
 $$s(D,{\bf o})=-\sigma(D,{\bf o})=-k(D,{\bf o})+w(D,{\bf o})+2S^+(D,{\bf o})-1\:.$$
\end{prop}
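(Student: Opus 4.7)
The plan is to pinch $s(D,{\bf o})$ between $V(D,{\bf o})$ and $U(D,{\bf o})$ and then show the two bounds coincide for non-split alternating diagrams. The upper inequality $s \leq U$ is \eqref{sup_s}. For the lower one, a non-split alternating diagram has $r=1$ and represents an H-thin link (the fact already invoked in the earlier corollary $s=-\sigma$), hence a pseudo-thin link, so Theorem \ref{teo:lower_bound} gives $s(D,{\bf o}) \geq V(D,{\bf o})$. Once $U=V$ is established, the squeeze $V \leq s \leq U$ collapses to equalities; reading off the definition of $V$ yields the formula $s = -k + w + 2S^+ - 1$, and the equality with $-\sigma$ is the earlier corollary.

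Unpacking the definitions,
$$U(D,{\bf o})-V(D,{\bf o}) = 2\bigl(k(D,{\bf o}) - S^+(D,{\bf o}) - S^-(D,{\bf o}) + 1\bigr),$$
so $U=V$ reduces to the purely combinatorial identity
$$S^+(D,{\bf o}) + S^-(D,{\bf o}) = k(D,{\bf o}) + 1$$
for the simplified Seifert graph $G_s$ of $D$. The essential input is the defining property of alternating diagrams: between any two Seifert circles all connecting crossings have the same sign. Hence every edge of $G_s$ has a well-defined sign, and $G_s$ decomposes into edge-disjoint subgraphs $G_s^+$ and $G_s^-$ sharing the same $k$ vertices; moreover $G_s$ is connected because $D$ is non-split.

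The main obstacle is this combinatorial identity. I would prove it exactly as in \cite{Lobb}: the rank–nullity formula applied to the three graphs $G_s^+$, $G_s^-$, $G_s$ shows that $S^+ + S^- = k+1$ is equivalent to the statement that the cycle space of $G_s$ is the internal direct sum of the cycle spaces of $G_s^+$ and $G_s^-$, i.e.\ no independent cycle of $G_s$ mixes edges of both signs. The planar embedding of $G_s$ inherited from the diagram, combined with the alternating condition, gives such a decomposition through a local face-by-face analysis. Crucially, this argument uses only non-splitness (to guarantee that $G_s$ is connected) and the alternating hypothesis, not that the underlying link has a single component; hence Lobb's argument carries over verbatim to non-split alternating links and completes the proof.
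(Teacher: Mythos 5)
Your proposal is correct and follows the same route as the paper: the paper likewise obtains the equalities by squeezing $s$ between $V$ (via Theorem \ref{teo:lower_bound} with $r=1$, since a non-split alternating link is H-thin and hence pseudo-thin) and $U$ (Lobb's upper bound \eqref{sup_s}), and it defers the identity $U=V$ entirely to Lobb's combinatorial argument, stating that the proof is identical to the knot case. Your explicit reduction of $U=V$ to the identity $S^+(D,{\bf o})+S^-(D,{\bf o})=k(D,{\bf o})+1$ and its reformulation as a cycle-space decomposition of the Seifert graph is a faithful unpacking of exactly that argument.
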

Theorem \ref{teo:lower_bound} gives a better lower bound for $s$ than Lobb's one, especially for non split links with a high 
number of components.

\section{Computations of the slice genus}  
\label{section:four}
\subsection{Morse moves and a connection between \texorpdfstring{$s(L)$}{Lg} and \texorpdfstring{$g_*(L)$}{Lg}}
We can say when two diagrams represent weakly cobordant links.
Given $\Sigma$ a weak cobordism between links $L$ and $L'$, from Morse theory we know 
that $\Sigma$ can be decomposed in a finite number of elementary cobordisms.
\begin{figure}[H]
  \centering
  \includegraphics[width=0.52\textwidth]{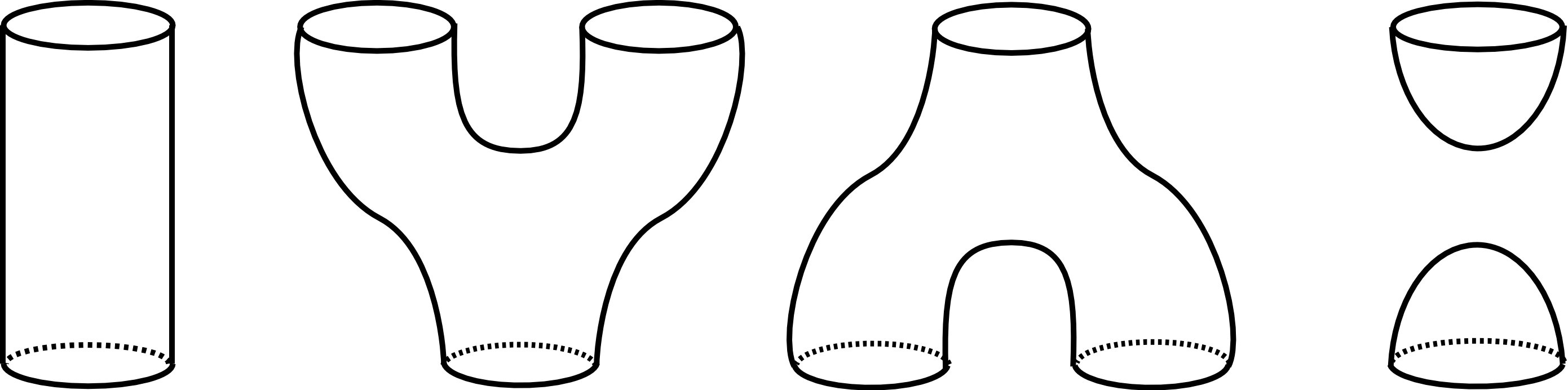} 
  \caption{}
\end{figure}
This means that if we have $D$ and $D'$ diagrams of $L$ and $L'$, there is a weak cobordism between them if and only if
$D'$ is obtained from $D$ with a finite number of Reidemeister and Morse moves. Each Morse move corresponds to a 
$k$-handle in the cobordism decomposition.
\begin{figure}[H]
  \centering
  \includegraphics[width=0.4\textwidth]{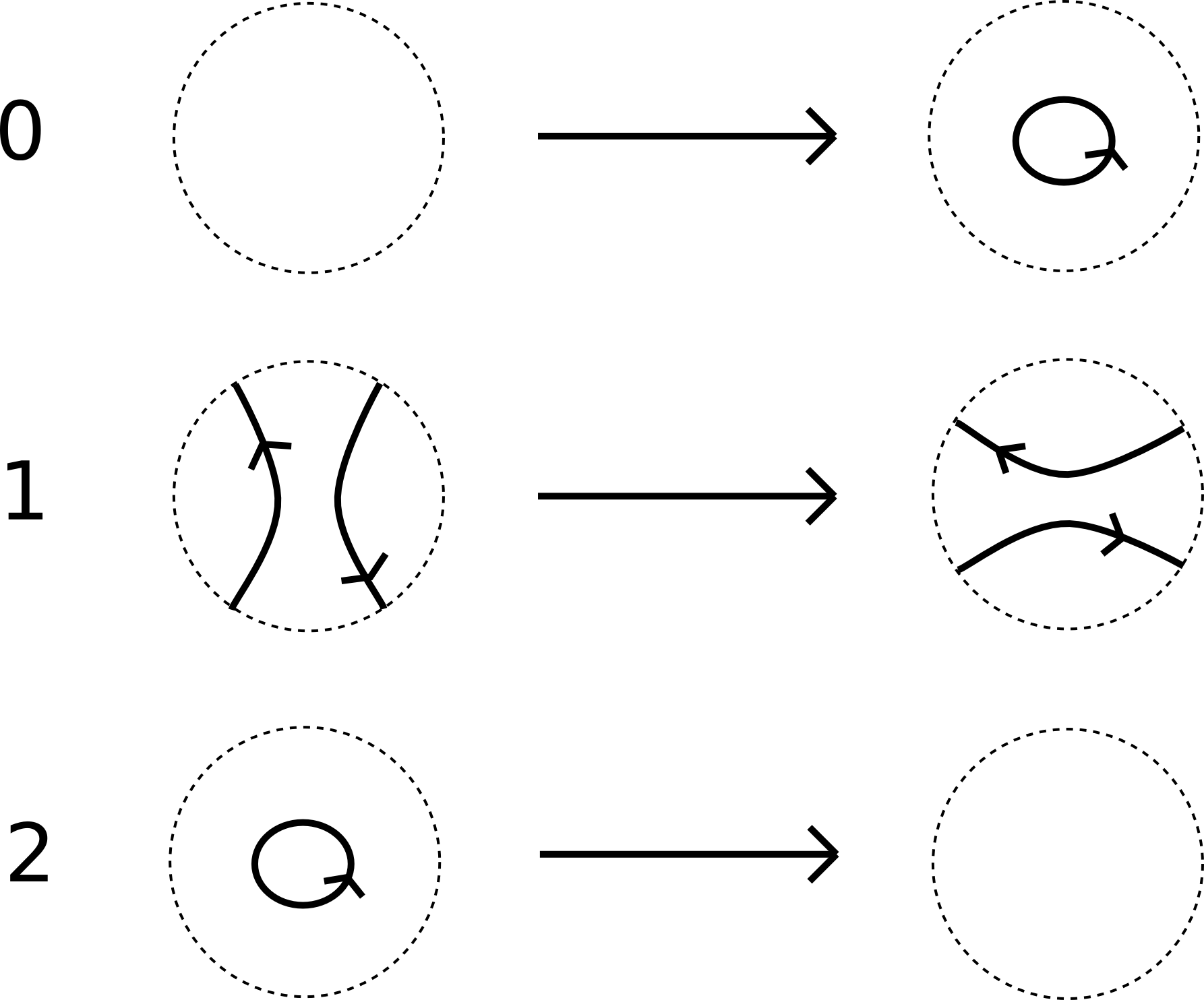} 
  \caption{Local behaviour of Morse moves}
\end{figure}  
Now we use Equation ~\eqref{cobordismo_s} to show that the $s$ invariant gives a lower bound for the slice genus of a link.
In fact, given $\Sigma$ a weak cobordism between a $n$ component link $L$ and the unknot, we have 
$$|s(L)|\leq -\chi(\Sigma)=2g(\Sigma)+n-1$$
from which we obtain the following equation: 
\begin{equation}
 \label{bound_inf_s}
 g_*(L)\geq\dfrac{|s(L)|+1-n}{2}\:.
\end{equation}
An upper bound for $g_*(L)$ is easy to find using the following proposition.
\begin{prop}
 Given a link $L$, then the Seifert algorithm associated to a diagram $(D,{\bf o})$ of $L$, gives a Seifert surface $S$ for 
 $L$ such that $\chi(S)=k(D,{\bf o})-c(D)$.
\end{prop}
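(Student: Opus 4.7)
The plan is to unpack the Seifert algorithm and read off the Euler characteristic from its handle decomposition. Recall that the algorithm proceeds in two steps: first, at every crossing of $(D,\mathbf{o})$ we perform the orientation-preserving (coherent) resolution, obtaining by definition exactly $k(D,\mathbf{o})$ pairwise disjoint oriented circles in the plane; second, each such circle is capped off by an embedded disk (pushed slightly into $D^4$ when nested disks would otherwise intersect, so as to produce disjoint disks), and at the location of each of the $c(D)$ original crossings we glue in a half-twisted band connecting the two disks whose boundary arcs meet there. The result is an oriented surface $S$ with $\partial S = L$.

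The key observation is that this construction is literally a handle decomposition of $S$ with $k(D,\mathbf{o})$ $0$-handles (the disks) and $c(D)$ $1$-handles (the bands), and no higher handles. Since Euler characteristic is additive under handle attachment with a sign $(-1)^i$ for an $i$-handle, I would simply compute
\begin{equation*}
\chi(S) = k(D,\mathbf{o})\cdot\chi(D^2) + c(D)\cdot\chi(\text{band}) \cdot(-1) = k(D,\mathbf{o}) - c(D),
\end{equation*}
using $\chi(D^2) = 1$ and the fact that attaching a $1$-handle drops the Euler characteristic by $1$.

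I do not anticipate any substantive obstacle: the only point worth being careful about is to make sure the disks and bands really do assemble into an embedded surface (which is standard, and handled by pushing nested disks into the fourth dimension if needed) and that each crossing contributes exactly one band, which is immediate from the construction. The statement is really just the bookkeeping identity $\chi = \#\{0\text{-handles}\} - \#\{1\text{-handles}\}$ applied to Seifert's algorithm.
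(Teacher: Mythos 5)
Your proof is correct and takes essentially the same approach as the paper, whose entire proof is the one-line assertion that the identity follows from the construction; you simply supply the handle count ($k(D,{\bf o})$ zero-handles, $c(D)$ one-handles) that the paper leaves implicit. One small correction: nested Seifert circles are capped by disks stacked at different heights in $\R^3$, not pushed into $D^4$ --- the surface must be embedded in $S^3$ for the paper's subsequent use of $g(S)\geq g(L)$ --- though this does not affect the Euler characteristic computation.
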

\begin{proof}
 It simply follows from the construction of the surface in the Seifert algorithm.   
\end{proof}
Thus
\begin{equation}
 \label{bound_sup_s}
 \dfrac{2-n-k(D,{\bf o})+c(D)}{2}=\dfrac{2-n-\chi(S)}{2}=g(S)\geq g(L)\geq g_*(L)\:.
\end{equation}
We want to apply what we have shown to some families of links in order to compute $g_*$.

\subsection{The Milnor conjecture}
For a long time this important conjecture about the slice genus of torus knots was unsolved. Later, 
in the early nineties, Kronheimer and Mrowka proved it using gauge theory. One of the most interesting result of Rasmussen
in \cite{Rasmussen} consists precisely in providing a purely combinatorial proof of the Milnor conjecture, 
with his own invariant. In this paper we use the same technique to prove Proposition \ref{prop:Milnor}.
$\newline$ Since a torus link $T_{p,q}$, with all components oriented in the same direction, is positive we can find the value 
of the $s$ invariant for every $(p,q)$.
The coherent resolution of $D_{p,q}$, the standard diagram of $T(p,q)$, gives:
\begin{figure}[H]
  \centering
  \includegraphics[width=0.65\textwidth]{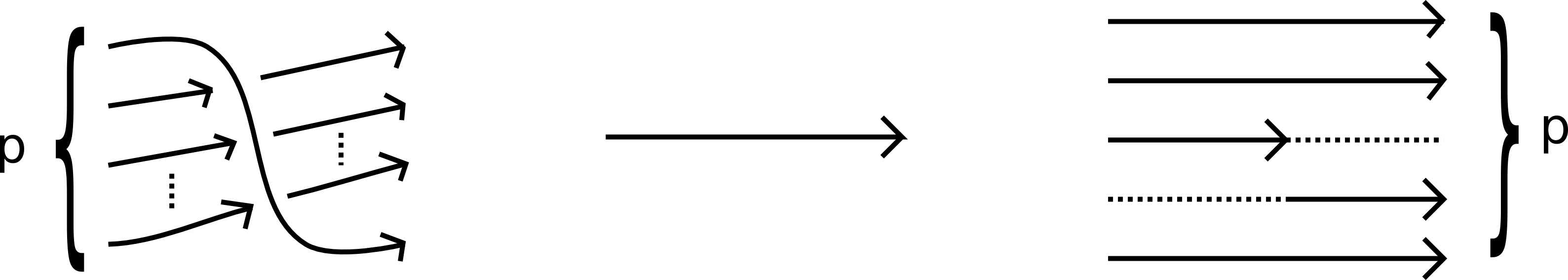}
  \caption{}
\end{figure}
Then $k(D_{p,q})=p$ and $c(D_{p,q})=q(p-1)$, whence Proposition \ref{prop:positive} says   
$$s(T_{p,q})=-p+q(p-1)+1=(p-1)(q-1)\:.$$
Now Equation \eqref{bound_inf_s} says that $$g_*(T_{p,q})\geq\dfrac{(p-1)(q-1)+1-n}{2}$$
instead Equation \eqref{bound_sup_s} 
gives $$g_*(T_{p,q})\leq\dfrac{2-n-p+q(p-1)}{2}=\dfrac{(p-1)(q-1)+1-n}{2}$$
so we have found that Equality \eqref{Milnor} $$g_*(T_{p,q})=\dfrac{(p-1)(q-1)+1-n}{2}$$
holds for every $p,q$.

Taking $n=1$, we have the same formula of the original conjecture. We observe that the only non trivial slice torus link is 
the Hopf link: 
in fact, we should have $(p-1)(q-1)=n-1$, so $p-1$ would be a divisor of $n-1$, whence $p\leq n$, but $n$ is the $\text{gcd}$ of 
$p$ and $q$, thus $n\leq p$ and then $p=n$. The equation becomes
$$(n-1)(q-1)=n-1$$ which, dividing by $n-1$ (because no non trivial torus knot is slice) gives $q-1=1$. The same holds for $p$, and
then we have $p=q=2$ which corresponds exactly to the Hopf link.
$\newline$ Finally, since $T_{p,q}$ is a positive link, we can say that every torus link is pseudo-thin thanks to
\ref{prop:conditions}: in fact, every orientation other than the reverse has $\text{lk}({\bf o'},{\bf o})\neq 0$.
This means that all the results we obtained on torus links hold even for their mirrors, which are negative links.
\subsection{Pretzel links}
In this subsection we prove Proposition \ref{prop:boh}. First we make some general 
considerations: these links have always two components $L_1$ and $L_2$, so they have two relative orientations. It follows 
that 
$\text{lk}({\bf o'},{\bf o})=\text{lk}(L_1,L_2)=h-k$. If $h\neq k$, $P_{2h,2k,2l+1}$ is pseudo-thin thanks to 
\ref{prop:conditions}:
they are non split because their determinant is non zero.

When $h$ or $k$ are zero, links are either trivial or torus links and we ignore them; also, we do not consider the case $h>0$,
$k>0$ that we will study later. Hence there are three cases remaining.

$\newline$ {\bf a)} $h<0$ , $k<0$ and $h\neq k$
$\newline$ We can use Inequalities \eqref{inf_s} and \eqref{sup_s} to obtain that $$s(P_{2h,2k,2l+1})=2l+2h+2\pm 1$$
as we can see in the following figure.
\begin{figure}[H]
        \centering
        \includegraphics[width=0.52\textwidth]{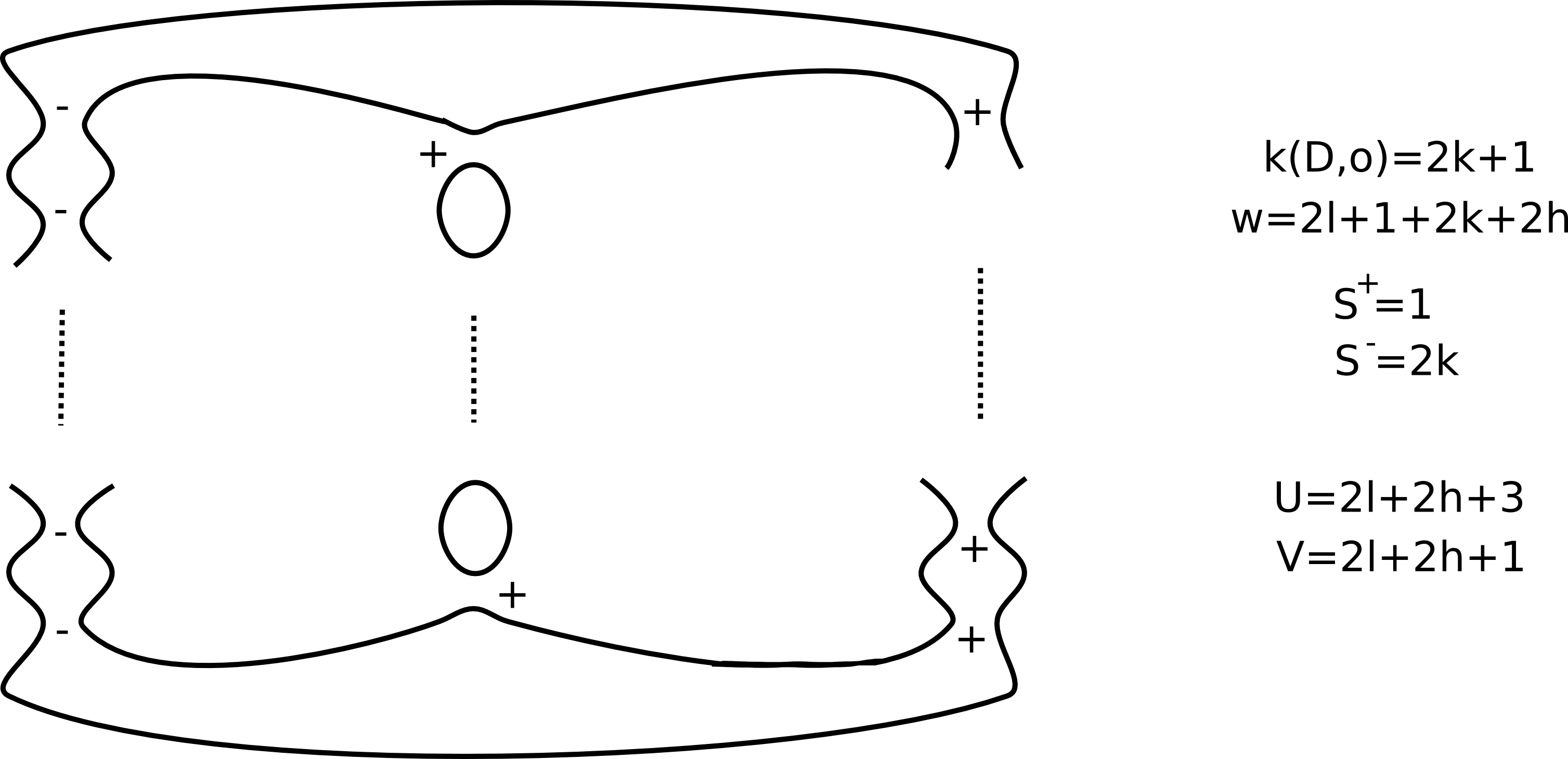}  
        \caption{}
\end{figure}
While, for the other relative orientation, the same argument says that $$s(P_{2h,2k,2l+1})=2l+2k+2\pm 1\:.$$
Now we take a look at Figure ~\ref{Pretzel}: if we apply a type 1 Morse move to two arcs on the central strand which belong 
to different components, we obtain that $P_{2h,2k,2l+1}$ is cobordant to the following knot.
\begin{figure}[H]
        \centering
        \includegraphics[width=0.3\textwidth]{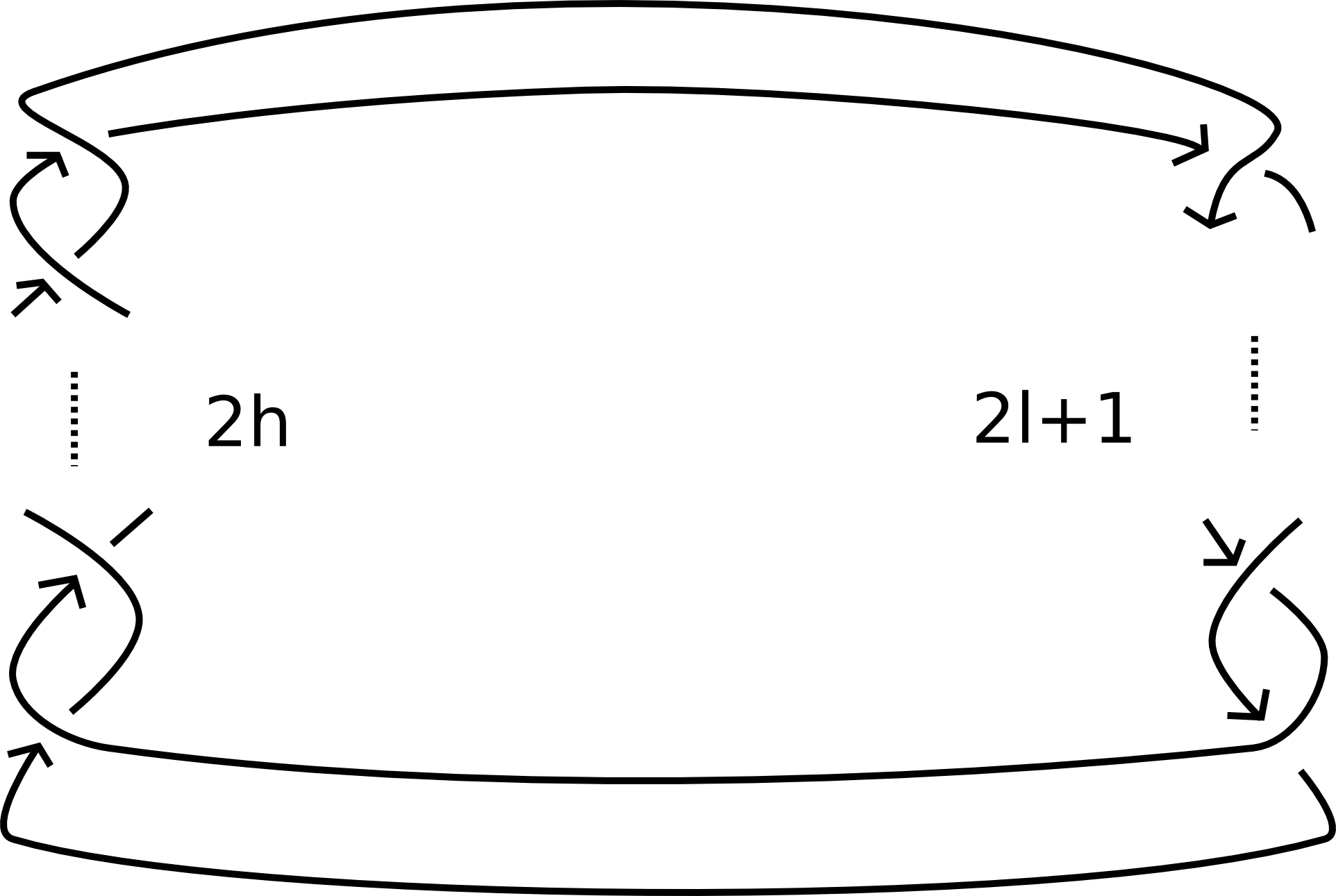} 
\end{figure}
If we set the condition that $l+h\geq 0$, that is $2l+1\geq|2h|$, then, with some Reidemeister moves, the $|2h|$ 
crossings in the left strand can be canceled out with $|2h|$ of the $2l+1$ crossings in the right strand. Then we have that 
$P_{2h,2k,2l+1}$ is cobordant to the torus knot $T_{2,2l+1+2h}$.

Now we make two observations: the first is that the cobordism we found consists of a saddle which connects two
components (so the genus does not change); the second one is that $g_*(T_{2,2l+1+2h})=l+h$. From these facts we have
       $$g_*(P_{2h,2k,2l+1})\leq l+h\:.$$
If $s(P_{2h,2k,2l+1})=2l+2h+3$ then ~\eqref{bound_inf_s} would give $g_*(P_{2h,2k,2l+1})\geq l+h+1$ and this would be a 
contradiction; so $$s(P_{2h,2k,2l+1})=2l+2h+1\:\:\:\:\:\text{ and }\:\:\:\:\:g_*(P_{2h,2k,2l+1})=l+h\:.$$
If $l+h=-1$ the knot of the figure is the unknot so we can directly deduce that $g_*(P_{2h,2k,2l+1})=0$.
       
With the other relative orientation we work similarly, except that the Morse move is done on the left strand. 
These results follow as in the previous case: \\
if $l+k\geq 0$ then $$s(P_{2h,2k,2l+1})=2l+2k+1\:\:\:\:\:\text{ and }\:\:\:\:\:g_*(P_{2h,2k,2l+1})=l+k\:.$$
If $l+k=-1$ then $g_*(P_{2h,2k,2l+1})=0$.

$\newline$ {\bf b)} $h>0$ and $k<0$
$\newline$ The link is positive so, thanks to ~\ref{prop:positive}, we know that 
       $$s(P_{2h,2k,2l+1})=2l+2h+1\:.$$ 
If we change the relative orientation then we must use estimates: 
\begin{figure}[H]
        \centering
        \includegraphics[width=0.5\textwidth]{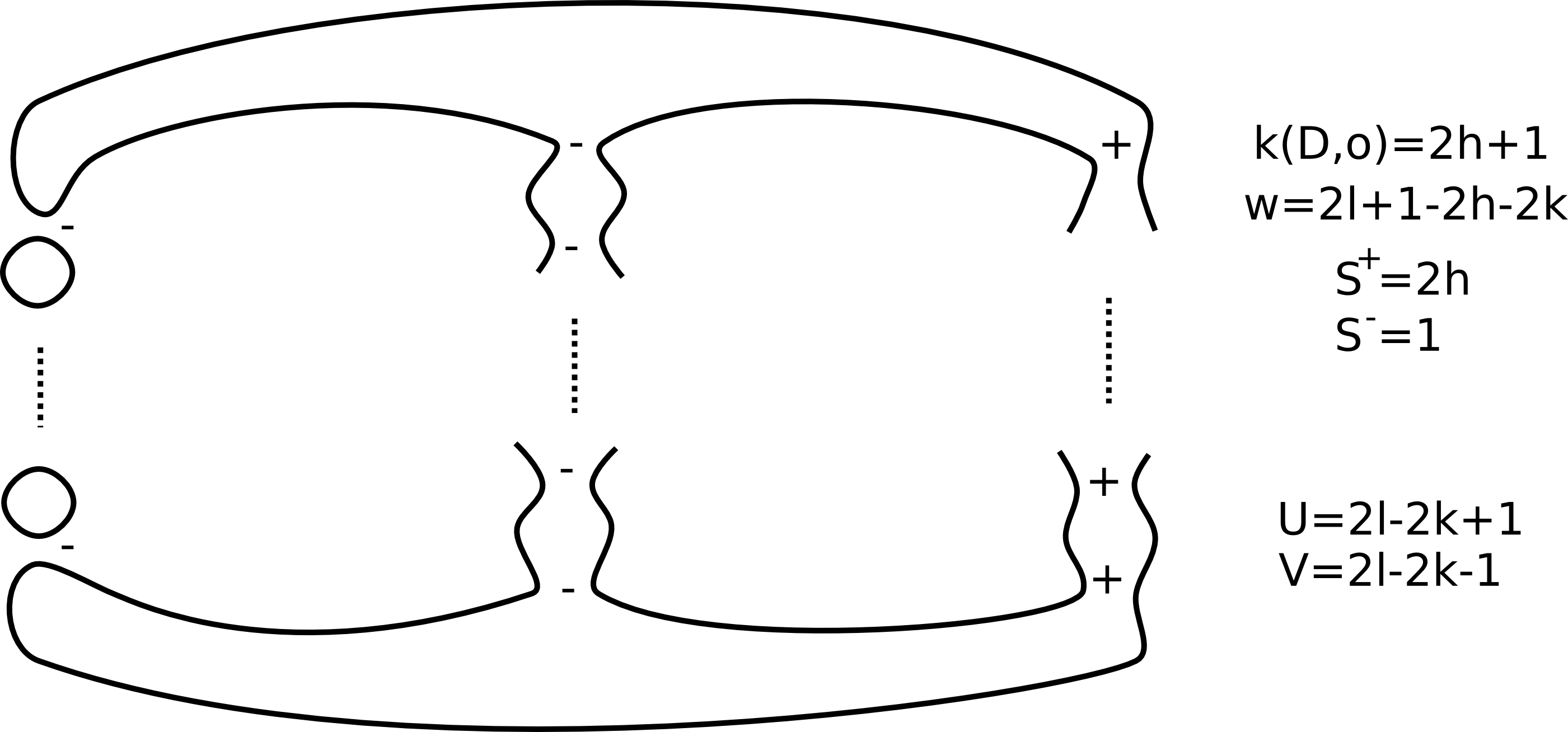}   
        \caption{}
\end{figure} 
the $s$ invariant can be $2l+2k\pm 1$.
$\newline$ Conclusions follow exactly like before: for the standard orientation we have $$g_*(P_{2h,2k,2l+1})=l+h\:.$$       
For the other one, if $l+k<0$ then $$s(P_{2h,2k,2l+1})=2l+2k+1\:\:\:\:\:\text{ and }\:\:\:\:\:g_*(P_{2h,2k,2l+1})=-k-l-1\:.$$
If $l+k=0$ then $g_*(P_{2h,2k,2l+1})=0$.
  
$\newline$ {\bf c)} $h<0$ and $k>0$
$\newline$ This case is formally the same as the previous one so we only report the results: 
$$s(P_{2h,2k,2l+1})=2l+2h\pm 1$$ 
if $l+h<0$ then $$s(P_{2h,2k,2l+1})=2l+2h+1\:\:\:\:\:\text{ and }\:\:\:\:\:g_*(P_{2h,2k,2l+1})=-l-h-1\:.$$
If $l+h=0$ then $g_*(P_{2h,2k,2l+1})=0$.

While, with the other relative orientation, the link is positive and then we have $$s(P_{2h,2k,2l+1})=2l+2k+1$$       
hence it is always $g_*(P_{2h,2k,2l+1})=l+k$.
\subsection{Linked twist knots}
We have already defined $Tw_n$ links in the introduction. The linking number between their components is always 2 so 
they are non split pseudo-thin links.
Figure \ref{asd} shows the coherent resolution of the diagram in Figure \ref{Tw_n}. 

We obtain that $s(Tw_n)=3$ for every $n\geq 0$ and, using Equation \eqref{bound_inf_s} and
keeping in mind that links have two components, we also have $g_*(Tw_n)\geq 1$.

We see that no link of this family is slice. However, if $n=1$, one component is a Stevedore knot which is known to be 
slice; taking advantage of this we can prove that $g_*(Tw_1)=1$. \newpage
\begin{figure}[H]
 \centering
 \includegraphics[width=0.75\textwidth]{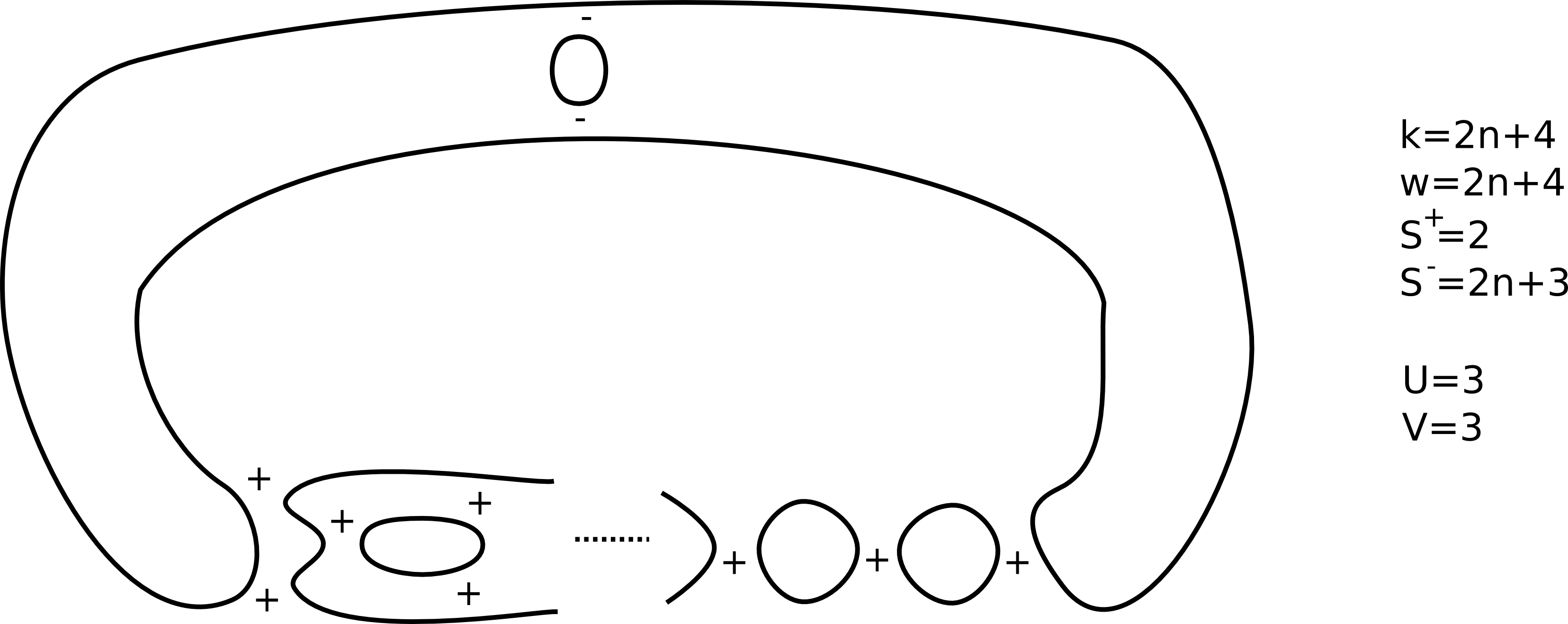} 
 \caption{}
 \label{asd}
\end{figure} 
All we need is to find a genus 1 surface properly embedded in $D^4$ which has a $Tw_1$ link ($L10^n_{19}$ in 
Thistlethwaite's table) as boundary.
\begin{figure}[H]
 \centering
 \includegraphics[width=0.75\textwidth]{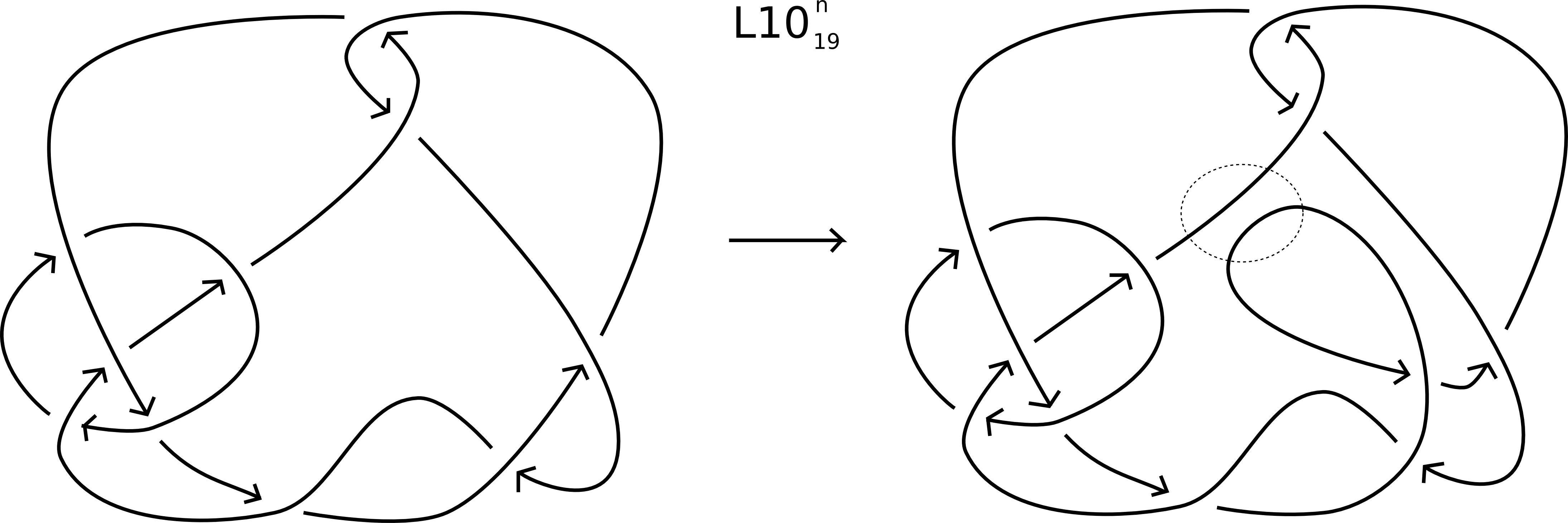} 
\end{figure}
Now we apply a type 1 Morse move in the highlighted tangle of previous figure. 
\begin{figure}[H]
 \centering
 \includegraphics[width=0.55\textwidth]{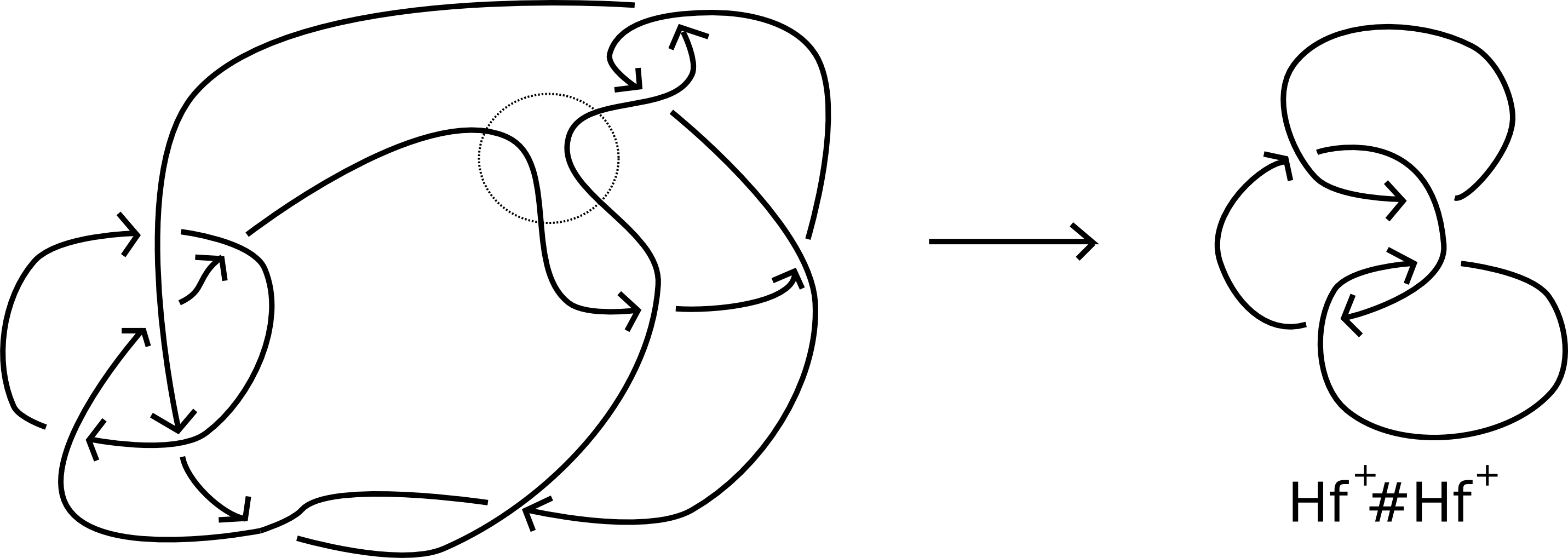}  
\end{figure}
We have decribed a cobordism between $L10^n_{19}$ link and the connected sum of two positive Hopf links which has genus 0.
This means that the slice genus is 0 as well, and we can outline the cobordism in this way.
\begin{figure}[H]
 \centering
 \includegraphics[width=0.4\textwidth]{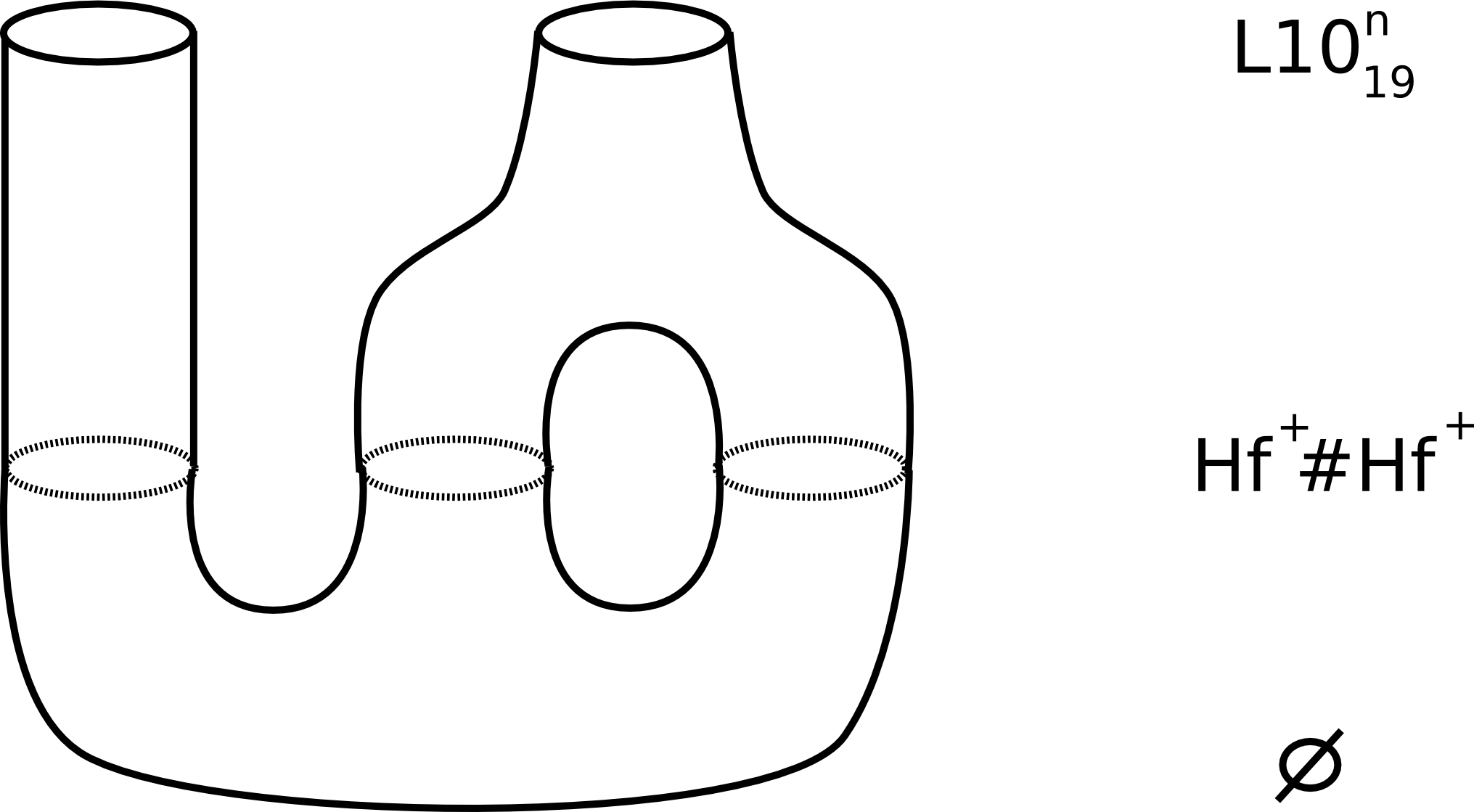}   
 \caption{The surface and its boundary are actually knotted in $D^4$}
\end{figure}
To sum up, we proved that $L10^n_{19}$ link is the boundary of a torus in $D^4$.

\section{Results on strong cobordisms}
\label{section:strong}
\subsection{Strong slice genus}
In this paragraph we want to define the invariant $g^*_*$, as mentioned in the introduction. 
\begin{defin}
 Given $L$ a $n$ component algebrically split link (which means $\text{lk}(L_i,L_j)=0$ for every $i,j$) we call the strong slice genus of $L$ the  
 minimum genus of a strong 
 cobordism between $L$ and $\bigsqcup^n\bigcirc$. We denote this number with $g_*^*(L)$.
\end{defin}
This invariant is well defined: there is always at least one strong cobordism as in the definition. To see this we prove the 
following lemma.
\begin{lemma}
 If $L$ is a $n$ component algebrically split link
 then each component bounds a compact, orientable surface in $D^4$ and these surfaces are all disjoint.
\end{lemma}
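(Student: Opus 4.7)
The plan is to produce, for each component $L_i$, a Seifert-type surface $F_i$ inside $S^{3}$ that avoids all the other components, and then to push these surfaces into $D^{4}$ at \emph{different} depths so that the resulting surfaces become automatically disjoint. The algebraic splitness hypothesis will be used exactly once, in the construction of the $F_i$'s.

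First I would recall that $L_i$ is a knot in $S^{3}$, and consider the complement $X_i=S^{3}\setminus\bigcup_{j\neq i}L_j$. By Alexander duality one has $H_1(X_i;\Z)\cong\Z^{n-1}$, generated by meridians $\mu_j$ of the removed components, and the class of $L_i$ in this group is $[L_i]=\sum_{j\neq i}\text{lk}(L_i,L_j)[\mu_j]$. The hypothesis $\text{lk}(L_i,L_j)=0$ for all $j\neq i$ therefore forces $[L_i]=0$ in $H_1(X_i;\Z)$, so $L_i$ bounds an integer $2$-chain in $X_i$, which by a standard general-position/surgery argument (essentially Seifert's construction performed inside $X_i$) can be realised as a compact oriented embedded surface $F_i\subset X_i\subset S^{3}$ with $\partial F_i=L_i$. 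The key point is that $F_i$ has been chosen so that its interior misses all the other components: $F_i\cap L_j=\emptyset$ for every $j\neq i$.

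Next I would use the collar $S^{3}\times(0,1]\hookrightarrow D^{4}$. Fix depths $0<\eps_1<\eps_2<\dots<\eps_n<1$ and define, for each $i$, a properly embedded surface
\[
\widehat F_i=\bigl(L_i\times[\eps_i,1]\bigr)\cup\bigl(F_i\times\{\eps_i\}\bigr)\subset D^{4},
\]
smoothed along $L_i\times\{\eps_i\}$, so that $\widehat F_i$ is compact, orientable, and $\partial\widehat F_i=L_i\times\{1\}=L_i$. Then I would check disjointness case by case: two ``vertical'' collars $L_i\times[\eps_i,1]$ and $L_j\times[\eps_j,1]$ meet only if $L_i\cap L_j\neq\emptyset$ in $S^{3}$, which fails since the components are disjoint; two ``horizontal'' pieces $F_i\times\{\eps_i\}$ and $F_j\times\{\eps_j\}$ lie at distinct heights and so cannot meet; and a horizontal piece $F_i\times\{\eps_i\}$ meets a vertical collar $L_j\times[\eps_j,1]$ only if $\eps_i\geq\eps_j$ and $F_i\cap L_j\neq\emptyset$, which is ruled out by the construction of $F_i$ in the previous paragraph.

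The only nontrivial step is the first one: extracting an embedded surface $F_i$ in $X_i$ from the vanishing of $[L_i]\in H_1(X_i;\Z)$. Everything else is a geometric bookkeeping argument with collars and depths, which is routine once $F_i$ has been arranged to avoid the other components. This is where the hypothesis $\text{lk}(L_i,L_j)=0$ is essential; without it the class $[L_i]\in H_1(X_i;\Z)$ would be nonzero and no such surface $F_i$ could exist, forcing any surface with boundary $L_i$ to meet some $L_j$ and destroying the disjointness of the pushed-in surfaces.
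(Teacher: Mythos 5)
Your proof is correct, but it takes a genuinely different route from the paper's. The paper starts with arbitrary pushed-in Seifert surfaces $S_i\subset D^4$, makes them transverse, observes that $S_i$ and $S_j$ meet in points whose signs sum to $\text{lk}(L_i,L_j)=0$, and then cancels the intersection points in pairs by adding tubes to $S_i$ away from the other surfaces; orientability is preserved because the cancelled points have opposite signs, and the genus goes up with each tube. You instead spend the linking-number hypothesis entirely in $S^3$: since $[L_i]=\sum_{j\neq i}\text{lk}(L_i,L_j)[\mu_j]=0$ in $H_1\bigl(S^3\setminus\bigcup_{j\neq i}L_j\bigr)$, the component $L_i$ bounds an embedded oriented surface $F_i$ missing the other components, and then the push-ins at distinct depths $\eps_1<\dots<\eps_n$ are disjoint by inspection (your case check is complete, including the one nontrivial case of a horizontal sheet meeting a deeper vertical collar, which is exactly where $F_i\cap L_j=\emptyset$ is used). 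The trade-off: the paper's argument needs only the classical Seifert algorithm plus a transversality and tubing step in $D^4$, whereas yours invokes the slightly less elementary (though standard) fact that a null-homologous knot in a link complement bounds an embedded surface there, but in exchange all the work is three-dimensional and no modification of the surfaces inside $D^4$ is required. Since the lemma asks only for existence and makes no claim about genus, both routes are equally adequate; note that neither gives a useful genus bound, as the paper itself remarks.
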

\begin{proof}
Let $S_i$ be Seifert surfaces in $D^4$ for each component of $L$. We can suppose $S_i$ to be tranverse, so they intersect only 
in a finite number of points. 
Each pair of surfaces $S_i$ and $S_j$ intersect in an even number of points, one half positive
and one half negative; this is because the sum of all signs is equal to $\text{lk}(L_i,L_j)$, which is zero. Let $x_+$ and 
$x_-$ be two points such that $x_+,x_-\in S_i\cap S_j$: we can cancel these two intersections by adding a tube on $S_i$ 
disjoint from the other surfaces. Since $S_i$ is orientable and $x_{\pm}$ have opposite signs, the new surface is still orientable.

Finally, after removing all the intersections from every pair of surfaces, we obtain what we wanted. The reader can observe 
that, in the process, the genus of $S_i$ has been increased a lot.
\end{proof}
We have that $g_*(L)\leq g_*^*(L)$, because a strong cobordism is indeed a weak cobordism, and $g_*^*(L)=0$ if and only if $L$ is strongly slice, that is, strongly concordant to 
the unlink or, in other words, each component bounds a disk in $D^4$ and all these disks are disjoint.

$\newline$ We conclude with the following proposition, which states some basic properties of $g^*_*$.
\begin{prop}
 \label{prop:orientations}
 Let $L$ be a $n$ component algebrically split link, then 
 \begin{itemize}
  \item If $L'$ is obtained from $L$ reversing the orientation of one component then
        $g^*_*(L)=g^*_*(L')$.
  \item $g^*_*(L)=g^*_*(L^*)$.          
 \end{itemize}
\end{prop}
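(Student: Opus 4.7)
The plan is to construct, from any strong cobordism realizing the strong slice genus of $L$, a strong cobordism of the same genus realizing the strong slice genus of $L'$ (resp.\ $L^*$), and vice versa. Equality then follows by applying each construction in both directions.

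For the first item, let $L'$ denote the link obtained by reversing the orientation of the $j$-th component $L_j$. I first note that $L'$ is still algebraically split, since reversing one orientation only changes the sign of each linking number involving the $j$-th component. Given a strong cobordism $\Sigma=\Sigma_1\sqcup\cdots\sqcup\Sigma_n$ of genus $g$ between $L$ and $\bigsqcup^n\bigcirc$, I would keep every $\Sigma_i$ for $i\neq j$ unchanged and reverse the orientation of $\Sigma_j$ as an oriented surface. The resulting surface has the same genus $g$, and its induced boundary at $t=0$ is precisely $L'$, since only the $j$-th boundary circle has been reoriented. At $t=1$ the $j$-th unknot appears with reversed orientation, but an unknot with either orientation is ambient isotopic to itself, so after a small ambient isotopy we obtain a strong cobordism between $L'$ and $\bigsqcup^n\bigcirc$ of genus $g$. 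This gives $g_*^*(L')\leq g_*^*(L)$, and the reverse inequality follows by applying the same construction starting from $L'$.

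For the second item, the idea is to mirror the entire cobordism inside $S^3\times I$. Let $\rho\colon S^3\to S^3$ be an orientation-reversing self-diffeomorphism taking $L$ to $L^*$, and define $\phi\colon S^3\times I\to S^3\times I$ by $\phi(x,t)=(\rho(x),1-t)$. Note that $L^*$ is algebraically split because mirroring negates linking numbers. If $\Sigma\subset S^3\times I$ is a strong cobordism of genus $g$ between $L$ and $\bigsqcup^n\bigcirc$, then $\phi(\Sigma)$ is a compact orientable properly embedded surface of the same genus, with connected components $\phi(\Sigma_1)\sqcup\cdots\sqcup\phi(\Sigma_n)$, each meeting $S^3\times\{0\}$ and $S^3\times\{1\}$ in exactly one circle. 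Its boundary is $L^*$ at $t=0$ and the mirror of the unlink, which is again an $n$-component unlink, at $t=1$. Thus $\phi(\Sigma)$ is a strong cobordism of genus $g$ between $L^*$ and $\bigsqcup^n\bigcirc$, giving $g_*^*(L^*)\leq g_*^*(L)$; the reverse inequality follows from $(L^*)^*=L$.

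The argument is a geometric bookkeeping exercise and I do not anticipate a real obstacle. The only point that requires care is the verification that the modified surfaces still satisfy all the conditions in the definition of a strong cobordism, in particular that the decomposition of the surface into connected components, each having one boundary circle on each side of $S^3\times I$, is preserved. This is automatic for the component-wise orientation reversal, and for the map $\phi$ it holds because diffeomorphisms send connected components to connected components and swap the two boundary levels without subdividing boundary circles.
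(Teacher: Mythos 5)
Your proof is correct and matches the paper's (very terse) argument: reverse the orientation of the relevant surface component for the first item, and mirror the entire cobordism for the second (the paper phrases the mirroring diagrammatically, as the mirrored sequence of Reidemeister and Morse moves, rather than as an ambient diffeomorphism, but it is the same idea). One small bookkeeping slip: with $\phi(x,t)=(\rho(x),1-t)$ the image of $L\times\{0\}$ lies in $S^3\times\{1\}$, not $S^3\times\{0\}$, so either drop the $1-t$ or observe that reading a strong cobordism backwards preserves its genus.
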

\begin{proof}
 Both statements are obvious:
 \begin{itemize}
  \item If $\Sigma$ is the minimum strong cobordism between $L$ and $\bigsqcup^n\bigcirc$, then $\Sigma'$, obtained 
        reversing the orientation of one component of $\Sigma$, is the minimum strong cobordism between $L'$ and the unlink.
  \item Suppose $D$ is a diagram of $L$, then there is a sequence of Reidemeister and Morse moves that take
        $D$ into $\bigsqcup^n\bigcirc$. Then the same sequence, mirrored, take $D^*$ into $\bigsqcup^n\bigcirc$.
        This means that we have a strong cobordism of the same genus for 
        $L^*$.        
 \end{itemize}
\end{proof}

\subsection{Applications to pseudo-thin links}
Now we prove Theorem \ref{teo:not_concordant}; the proof uses the same techniques of Pardon applied to all 
pseudo-thin links, not only H-thin links as in \cite{Pardon}. First, we call 
diameter of a link $L$ the value 
$$\max\{s\:|\: d_L(0,s)>0\}-\min\{s\:|\: d_L(0,s)>0\}$$ Then the diameter of $M_i$ 
 is at least 2 for every $i$,  
 from Property \eqref{d_support}, so Equation \eqref{disjoint_union} says that $M$ has diameter at least $2k$. 
 Since a non split pseudo-thin link has always diameter equal to 2, we have that the diameter of $L$ is $2h$, for the same 
 reason.
 
 The cobordism $\Sigma$ and its inverse induce the following maps in homology:
 $$\Hl^0(L)\xlongrightarrow{F^*_{\Sigma}}\Hl^0(M)\xlongrightarrow{F^*_{-\Sigma}}\Hl^0(L)$$
 both are filtered isomorphisms of degree $-2g(\Sigma)$ for ~\ref{prop:filtered_homology}.
 
 This says that filtered Lee homology of $M$ in 0 is supported in degrees between 
 $$\max\{s\:|\: d_L(0,s)>0\}+2g(\Sigma)\text{ and }\min\{s\:|\: d_L(0,s)>0\}-2g(\Sigma)$$ 
 this means that $4g(\Sigma)+2h$ should be at least equal to the diameter of $M$ and so $4g(\Sigma)+2h\geq 2k$. 
 Thus we conclude that 
 $$g(\Sigma)\geq\left\lceil\dfrac{k-h}{2}\right\rceil\geq\dfrac{k-h}{2}\:.$$
We have the following corollary.
\begin{cor}
 If $L$ and $M$ are strongly concordant links with $h$ and $k$ split conponents and $L$ is pseudo-thin then $h\geq k$.
 $\newline$ In particular 
 \begin{itemize}
  \item Every non split pseudo-thin link is not strongly concordant to a split link. 
  \item A pseudo-thin link is strongly slice if and only if it is a disjoint union of slice knots.
 \end{itemize}
\end{cor}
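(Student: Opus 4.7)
The plan is to derive all three parts directly from Theorem \ref{teo:not_concordant}. For the main inequality, I would observe that strong concordance means there exists a strong cobordism $\Sigma$ between $L$ and $M$ with $g(\Sigma)=0$. Plugging into $g(\Sigma)\geq\lceil (k-h)/2\rceil$ yields $\lceil (k-h)/2\rceil\leq 0$, so $k\leq h$. No extra input is required here.

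For the first bullet, I would specialize to the case where $L$ is non-split pseudo-thin, so $h=1$. If $L$ were strongly concordant to a split link, one would have $k\geq 2$, contradicting $h\geq k$. For the second bullet, I would handle the two directions separately. The easy direction ($\Leftarrow$): if $L=K_1\sqcup\ldots\sqcup K_n$ with each $K_i$ a slice knot, then each $K_i$ is already in its own ball, so the slice disks $D_i$ for the $K_i$ can be chosen geometrically disjoint in $D^4$ without any modification; the union $D_1\sqcup\ldots\sqcup D_n$ is then a genus-zero strong cobordism from $L$ to $\bigsqcup^n\bigcirc$.

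For the harder direction ($\Rightarrow$), suppose $L$ is a strongly slice pseudo-thin link with $n$ components and $h$ split components. Applying the main inequality with $M=\bigsqcup^n\bigcirc$ (which has $k=n$ split components) gives $h\geq n$. Since one always has $h\leq n$, we get $h=n$, meaning every component of $L$ is its own split component, so $L$ itself is the disjoint union $K_1\sqcup\ldots\sqcup K_n$ of its components. By definition of a strong cobordism, the genus-zero cobordism $\Sigma$ from $L$ to the unlink decomposes as a disjoint union $\Sigma_1\sqcup\ldots\sqcup\Sigma_n$ with $g(\Sigma_1)+\ldots+g(\Sigma_n)=0$; since each summand is nonnegative, every $\Sigma_i$ has genus zero and realizes $K_i$ as concordant to the unknot, i.e.\ slice.

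The main obstacle is really only in the forward direction of the second bullet, where one must squeeze two facts out of strong sliceness at once: that $L$ already has all components split (which uses the main inequality in a nontrivial way) and then that the cobordism splits componentwise with each piece of genus zero (which is immediate from the definition of strong cobordism once we know $L$ is a disjoint union). Everything else is a one-line consequence of Theorem \ref{teo:not_concordant}.
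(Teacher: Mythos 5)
Your proposal is correct and follows essentially the same route as the paper: every part is deduced from Theorem \ref{teo:not_concordant} by taking $g(\Sigma)=0$, and the forward direction of the second bullet uses the resulting inequality against the $n$-component unlink to force $L$ to be a disjoint union of knots, after which the strong cobordism splits into genus-zero knot concordances. Your write-up of that last step is merely more explicit than the paper's terse ``for $h>1$ we reason in the same way,'' but the argument is the same.
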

\begin{proof}
 Suppose that $k\geq h+1$, then from the previous theorem, we have that a strong cobordism between $L$ and $M$ has
 $g(\Sigma)\geq 1$; this is a contradiction because the links are strongly concordant.
 \begin{itemize}
  \item If $L$ is a non split pseudo-thin link then every link strongly concordant to it is also non split.
  \item If $h=1$, from the previous statement it follows that $M$ is a slice knot and the converse is obvious. For $h>1$ we reason 
        in the same way.
 \end{itemize}
\end{proof}
The most important application of Theorem \ref{teo:not_concordant} is found by using the techniques of its proof to prove the 
following estimate, 
which is a version of ~\eqref{bound_inf_s} for the strong slice genus.
\begin{teo}
 \label{teo:mine}
 Let $L=L_1\cup...\cup L_n$ be a $n$ component non split pseudo-thin link which is also algebrically split;
 then we have Inequality \eqref{boh}:
 $$g_*^*(L)\geq\dfrac{|s(L)|+n-1}{2}\:.$$
\end{teo}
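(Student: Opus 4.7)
The plan is to adapt the diameter argument used in the proof of Theorem \ref{teo:not_concordant}, now taking the $n$-component unlink $\bigsqcup^n\bigcirc$ as the target of the cobordism. Let $\Sigma$ be a strong cobordism from $L$ to $\bigsqcup^n\bigcirc$ that realises $g_*^*(L)$. Since $\Sigma$ decomposes as a disjoint union $\bigsqcup_{i=1}^n S_{g_i,2}$ with $\sum_i g_i = g(\Sigma) = g_*^*(L)$, we have $\chi(\Sigma) = -2g_*^*(L)$, and the same holds for the reversed cobordism $-\Sigma$ from $\bigsqcup^n\bigcirc$ back to $L$. Proposition \ref{prop:filtered_homology} applied to both cobordisms gives filtered isomorphisms
\[
\Hl^0(L) \xlongrightarrow{F^*_\Sigma} \Hl^0\bigl(\bigsqcup^n\bigcirc\bigr) \xlongrightarrow{F^*_{-\Sigma}} \Hl^0(L)
\]
each of degree $-2g_*^*(L)$.

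Next I would pin down the supports of the two $d$ invariants in homological degree $0$. Since $L$ is non-split pseudo-thin, every nonzero class $[v]\in\Hl^0(L)$ has $\text{sdeg}[v]\in\{s(L)-1,\,s(L)+1\}$. The computation of $d_{\bigsqcup^n\bigcirc}$ given in Section~\ref{section:three} shows that $\text{supp}\,d_{\bigsqcup^n\bigcirc}(0,\cdot)=\{-n,-n+2,\dots,n-2,n\}$, with extremes $\pm n$. Pick any nonzero $[w]\in\Hl^0(\bigsqcup^n\bigcirc)$. Since $F^*_\Sigma$ is surjective we have $[w]=F^*_\Sigma[v]$ for some nonzero $[v]\in\Hl^0(L)$, and the filtered property of $F^*_\Sigma$ yields
\[
\text{sdeg}[w] \geq \text{sdeg}[v] - 2g_*^*(L) \geq s(L)-1-2g_*^*(L).
\]
On the other hand $F^*_{-\Sigma}[w]$ is a nonzero class in $\Hl^0(L)$, so $\text{sdeg}(F^*_{-\Sigma}[w])\leq s(L)+1$, and the filtered property of $F^*_{-\Sigma}$ gives $\text{sdeg}(F^*_{-\Sigma}[w])\geq\text{sdeg}[w]-2g_*^*(L)$, whence $\text{sdeg}[w]\leq s(L)+1+2g_*^*(L)$.

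Specialising $[w]$ to classes realising the maximum sdeg $n$ and the minimum sdeg $-n$ on the unlink side produces the two inequalities $n\leq s(L)+1+2g_*^*(L)$ and $-n\geq s(L)-1-2g_*^*(L)$, equivalently $2g_*^*(L)\geq n-1-s(L)$ and $2g_*^*(L)\geq n-1+s(L)$. Taking the maximum of the right-hand sides gives $g_*^*(L)\geq(|s(L)|+n-1)/2$, which is \eqref{boh}. The main (and really the only substantive) point of the argument is that both $\Sigma$ and $-\Sigma$ are strong, so each induces a filtered isomorphism of the \emph{same} degree $\chi(\Sigma)$; this symmetry is what pinches the support of $d_{\bigsqcup^n\bigcirc}(0,\cdot)$ inside an interval of total length $2+4g_*^*(L)$. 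For a weak (non-strong) cobordism only a one-sided bound on the variation of sdeg would be available, and one would recover only the weaker inequality \eqref{bound_inf_s}.
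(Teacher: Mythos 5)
Your argument is correct and is essentially the paper's own proof: both use the strong cobordism $\Sigma$ and its reverse to get filtered isomorphisms of degree $-2g_*^*(L)$ on $\Hl^0$, then compare the support $\{s(L)\pm 1\}$ of $d_L(0,\cdot)$ with the extremes $\pm n$ of $d_{\bigsqcup^n\bigcirc}(0,\cdot)$ to extract the two inequalities $n\leq s(L)+1+2g_*^*(L)$ and $-n\geq s(L)-1-2g_*^*(L)$. Your write-up just spells out more explicitly the surjectivity and two-sided sdeg bookkeeping that the paper leaves implicit.
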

\begin{proof}
 First we recall that a non split pseudo-thin link has $d_L(0,s)>0$ only for $s=s(L)\pm 1$.
 
 Let $\Sigma$ and $M=\bigsqcup^n\bigcirc$ be as in Theorem \ref{teo:not_concordant} and let $F^*_{\pm\Sigma}$ be the same 
 homology maps as in its proof. We already know the values of $d_{\bigsqcup^n\bigcirc}$ and then we have 
 $$\max\{s\:|\:d_{\bigsqcup^n\bigcirc}>0\}=n\text{ and }\min\{s\:|\:d_{\bigsqcup^n\bigcirc}>0\}=-n\:.$$
 Since these maps are filtered of degree $-2g(\Sigma)$ we have 
 $$s(L)+1+2g(\Sigma)\geq n\:\:\:\text{ and }\:\:\:s(L)-1-2g(\Sigma)\leq -n$$ that means
 $g_*^*(L)\geq\dfrac{\pm s(L)+n-1}{2}$
 which is the statement. 
\end{proof}

\section{Examples}
\label{section:examples}
We talked about strongly slice links in the previous section, but we have not yet shown any non split one. 
In fact, it is not so easy to find an example of such a link. 
Let us take a look at Figure \ref{L10n36}: \newpage
\begin{figure}[H]
  \centering
  \includegraphics[width=0.25\textwidth]{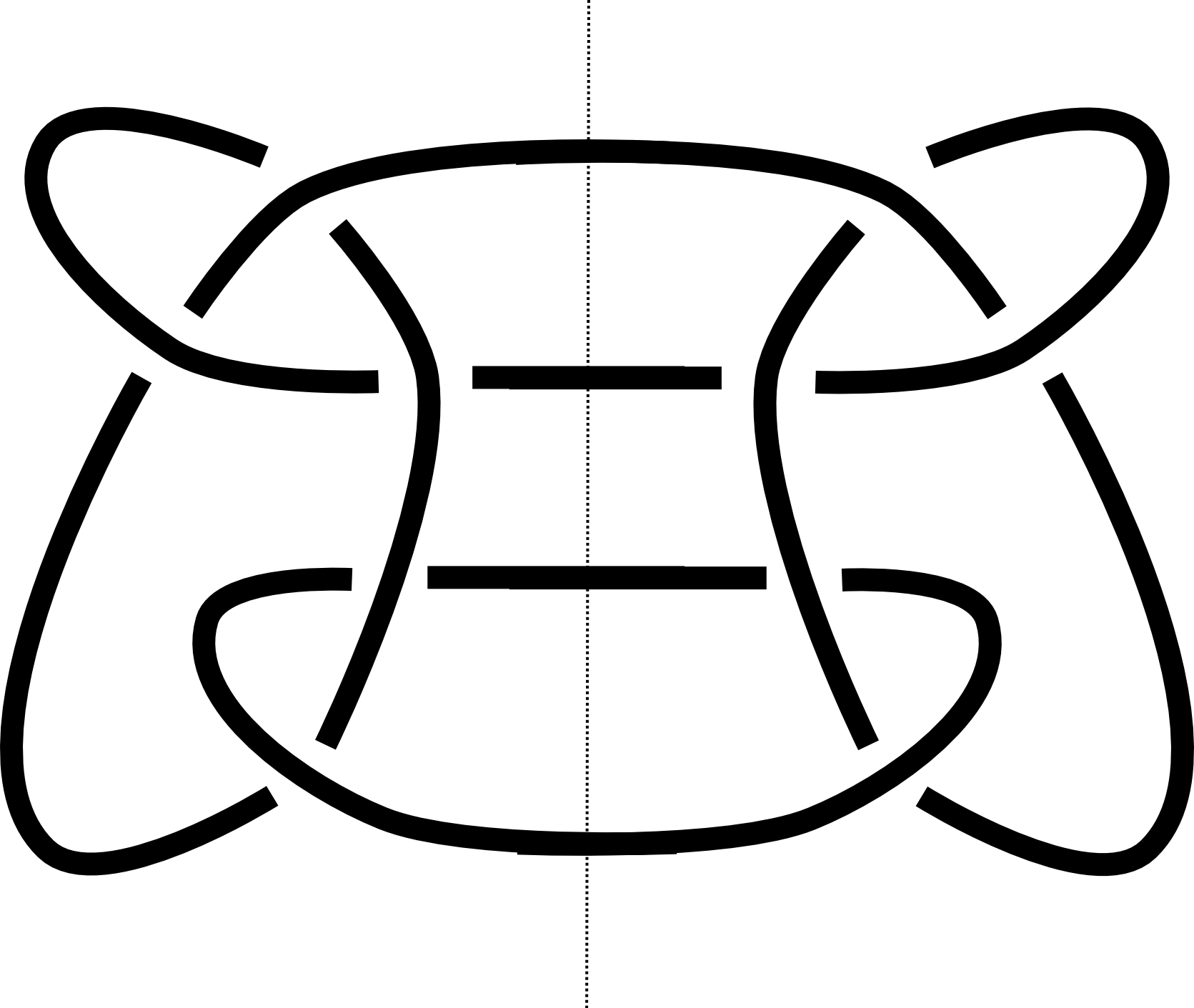} 
  \caption{A $L10^n_{36}$ link: orientations are irrelevant for \ref{prop:orientations}}
  \label{L10n36}
\end{figure}
this link is a symmetric union, a class of links introduced by Kinoshita and Terasaka in \cite{Kinoshita}, which are strongly 
slice. Moreover, 
we can say that it is non split because it is classified in Thistlethwaite's table, so this is the example we were looking for.

Since $s$ and $d$ are strong concordance invariants we have 
$$s(L10^n_{36})=s((L10^n_{36})^*)=s(\bigcirc\sqcup\bigcirc)=-1$$ and 
$$d_{L10^n_{36}}=d_{\bigcirc\sqcup\bigcirc}(i,s)=\left\{\begin{aligned}
                                                   &\binom{2}{k}\:\text{ if }i=0,s=2-2k \\
                                                   &0\:\:\:\:\:\:\:\:\text{ otherwise}\\
                                                  \end{aligned}\right.
                                                  \:\:\:\:\:\text{for }k=0,1,2\:.$$
So $s(L10^n_{36})\neq-s((L10^n_{36})^*)$ and this shows that Proposition \ref{prop:mirror_pseudo} does not hold in this case.
Indeed $L10^n_{36}$ is not pseudo-thin since its $d(0,\cdot)$ invariant is supported in 3 points. This means that the 
pseudo-thin hypothesis in \ref{prop:mirror_pseudo} is necessary.

$\newline$ Finally, we compute the value of the strong slice genus for some interesting links, using Theorem \ref{teo:mine}. 
We consider only one
relative orientation because, in all the following examples, links are algebrically split so $g^*_*$ does not change.

Let $Ti_n$, with $n\geq 0$, be links represented by the following diagram:
\begin{figure}[H]
  \centering
  \includegraphics[width=0.35\textwidth]{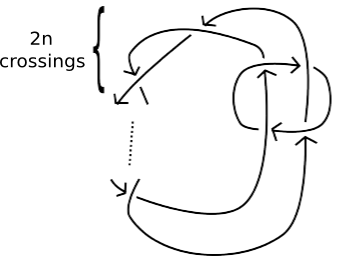}  
  \caption{A $Ti_n$ link}
\end{figure}
all these links are non split alternating, further we have $\text{lk}(L_1,L_2)=0$. Since they are alternating we can easily 
compute their $s$ invariant.

We show in Figure \ref{qwert} the coherent resolution of $Ti_n$ and, in this way, we obtain that $s(Ti_n)=2n+1$ and 
Theorem \ref{teo:mine} says that $$g_*^*(Ti_n)\geq\dfrac{2n+1+1}{2}=n+1\:.$$ \newpage
\begin{figure}[H]
  \centering
  \includegraphics[width=0.5\textwidth]{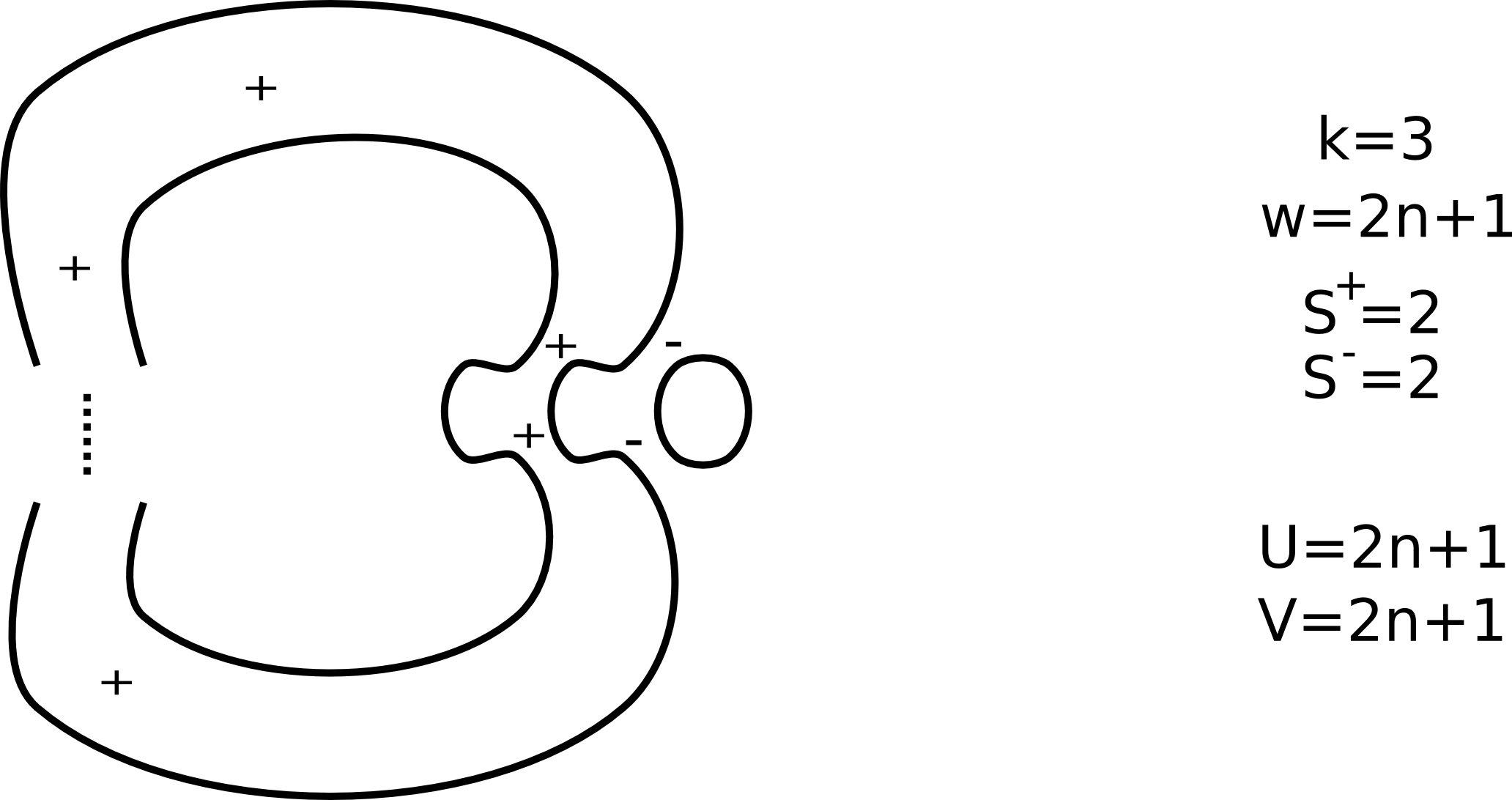}  
  \caption{}
  \label{qwert}
\end{figure} 
We apply a Reidemeister move and, in the highlighted point of next figure, a type 1 Morse move.
\begin{figure}[H]
  \centering
  \includegraphics[width=0.7\textwidth]{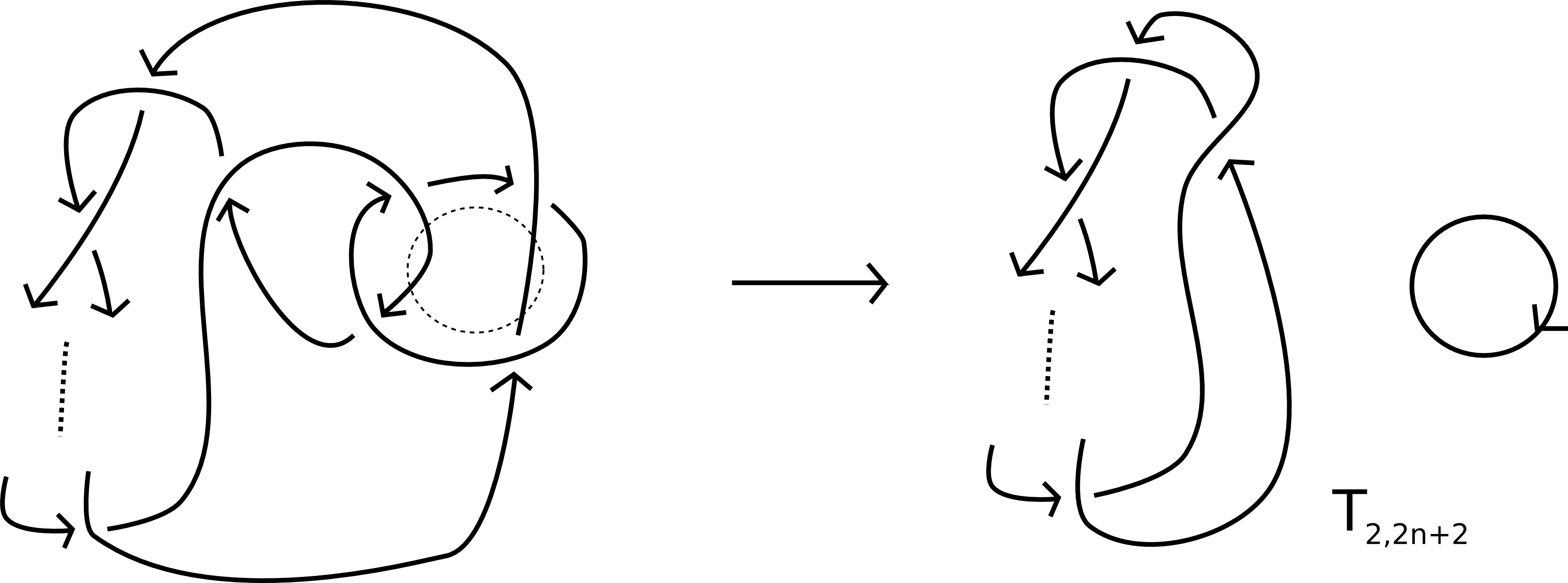}  
\end{figure}
We obtain that $Ti_n$ is cobordant to $\bigcirc\sqcup T_{2,2n+2}$ and we know the slice genus of this link: 
$g_*(T_{2,2n+2})=n$. Finally, we obtain the surface in the next figure.
\begin{figure}[H]
  \centering
  \includegraphics[width=0.45\textwidth]{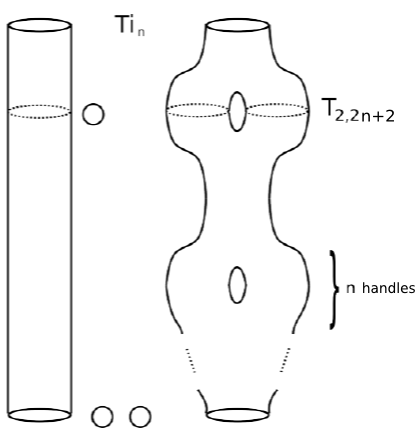}  
  \caption{}
\end{figure}
This is a strong cobordism of genus $n+1$ between $Ti_n$ and the unlink, so \\ $g_*^*(Ti_n)=n+1$.

For $n=0$ we have the Whitehead link.

If we take $n=1$ then $Ti_1=L7^a_3$. This link is an example where the slice genus and the strong slice genus are 
different: 
we have just shown that the second one is equal to 2, while, since $s(L7^a_3)=3$, we have $g_*(L7^a_3)\geq 1$ from Equation
\eqref{bound_inf_s}.
To see that the slice genus is exactly 1 we need to find a weak cobordism of genus 1 between $L7^a_3$ and the unknot. This 
construction is described in the following figures.
\begin{figure}[H]
  \centering
  \includegraphics[width=0.7\textwidth]{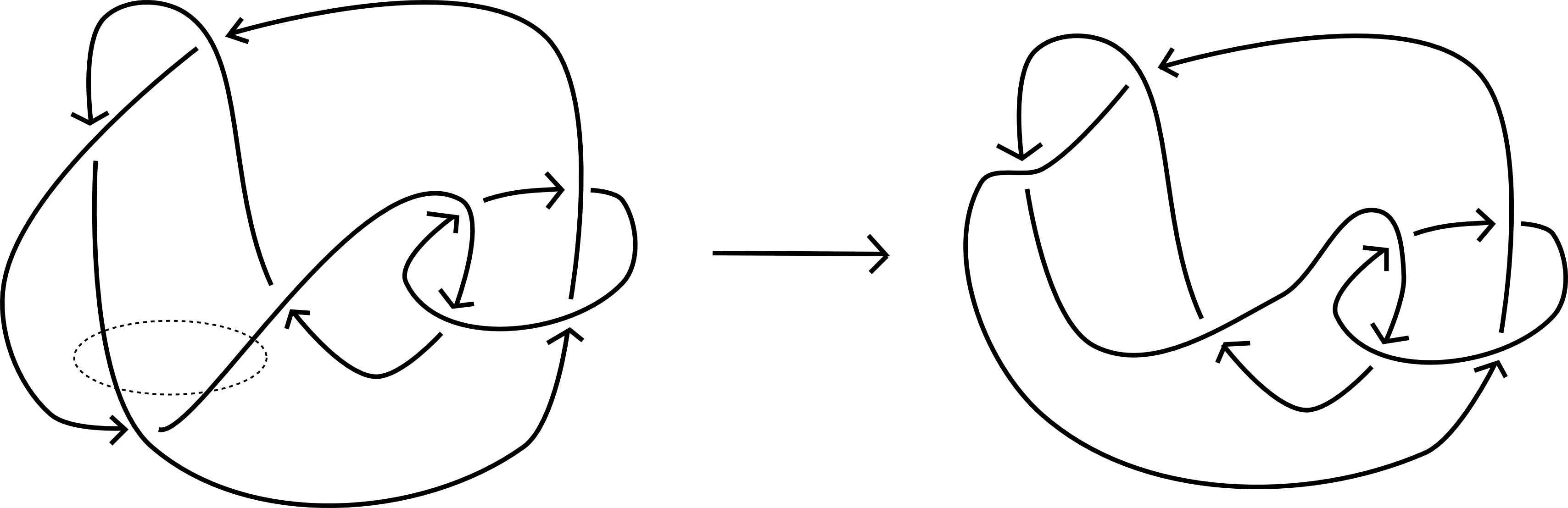}  
\end{figure} 
The figure on the left shows a $L7^a_3$ link, while the one on the right a $L6^a_5$ link. The highlighted tangles mark the points where we apply 
type 1 Morse moves. 
\begin{figure}[H]
  \centering
  \includegraphics[width=0.7\textwidth]{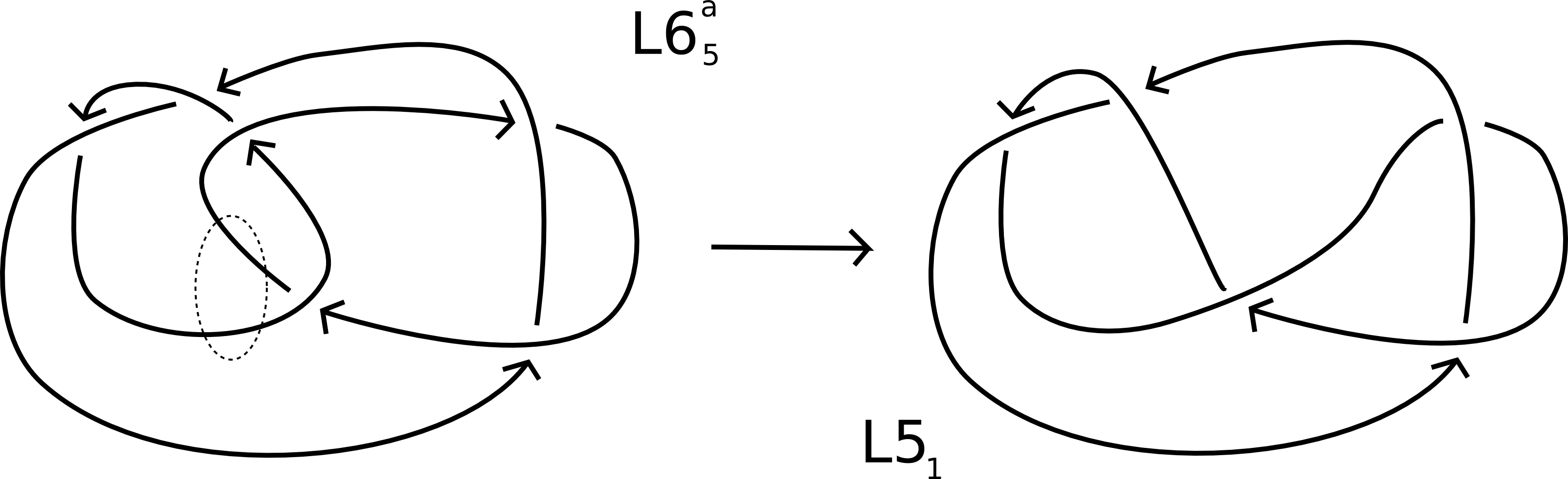}  
\end{figure}
Where $L5_1$ is the Whitehead link.
\begin{figure}[H]
  \centering
  \includegraphics[width=0.3\textwidth]{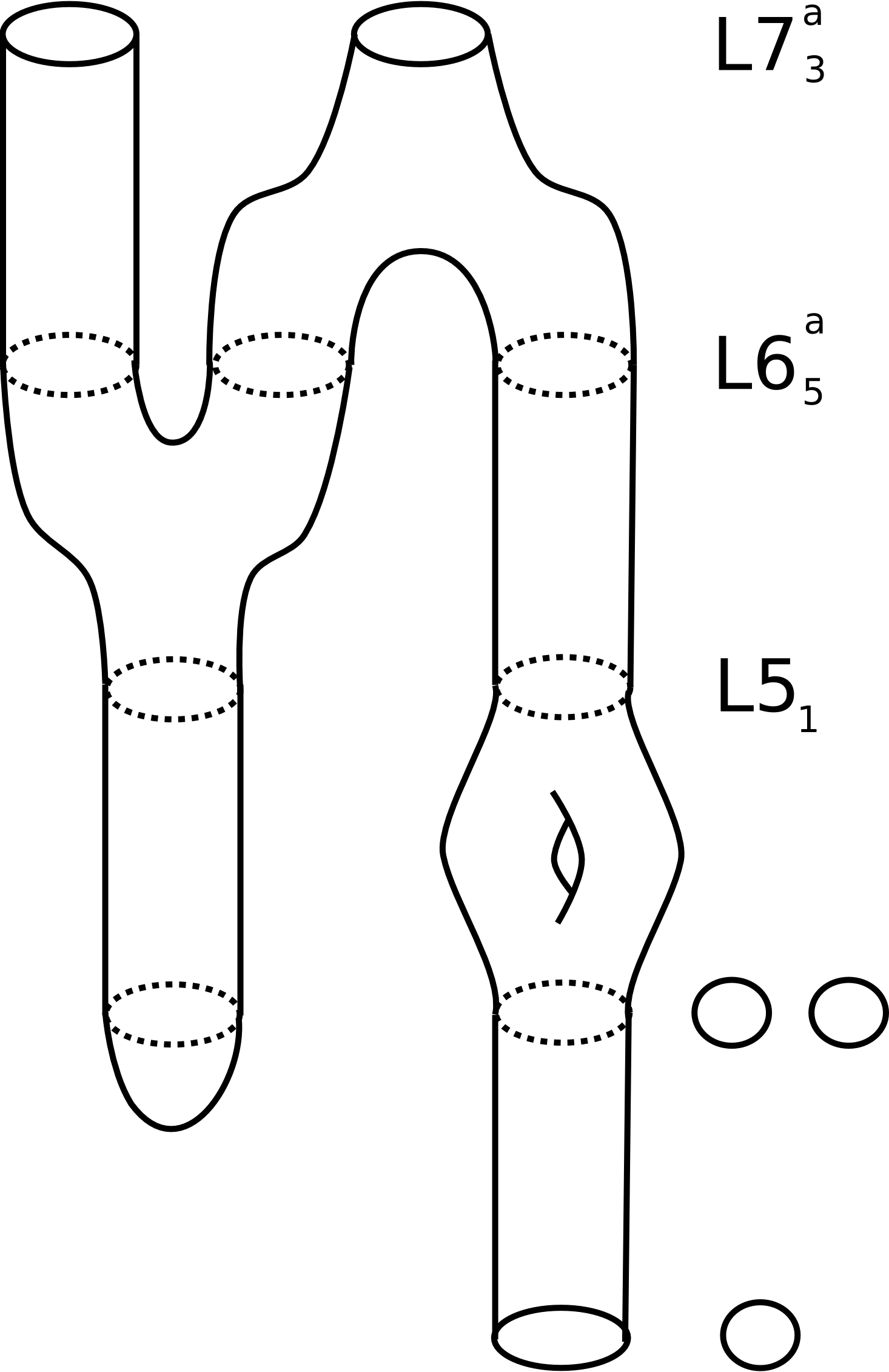}  
  \caption{}
\end{figure}  
The last cobordism has genus 1. In general, $g_*$ and $g_*^*$ are actually two distinct invariants.

$\newline$ Now we consider 3-strand pretzel links $P_{2h,2h,2l+1}$ with $h>0$. This is the only case we did not study in the 
previous section.
Such links are non split alternating and the linking number between their components is zero. Moreover, we have that 
$P_{2,2,2l+1}=Ti_l$ thus the previous example is simply a particular case of the following one.

We can compute the $s$ invariant as we did for the other pretzels, and we omit the details:
$$s(P_{2h,2h,2l+1})=V(D_{2h,2h,2l+1})=U(D_{2h,2h,2l+1})=2l+2h-1\:.$$ 
Theorem \ref{teo:mine} says $$g_*^*(P_{2h,2h,2l+1})\geq l+h\:.$$
We prove by induction that there exists a strong cobordism of genus $l+h$ between $P_{2h,2h,2l+1}$ and the unlink:

$\newline$ $h=1$
$\newline$ From previous result on $Ti_l=P_{2,2,2l+1}$ they have strong slice genus equal to $l+1$.

$\newline$ $h\rightarrow h+1$
$\newline$ We apply two type 1 Morse moves as shown in the following figure, which represents the upper part of the left and central 
strands of the standard diagram of $P_{2h+2,2h+2,2l+1}$.
\begin{figure}[H]
        \centering
        \includegraphics[width=0.78\textwidth]{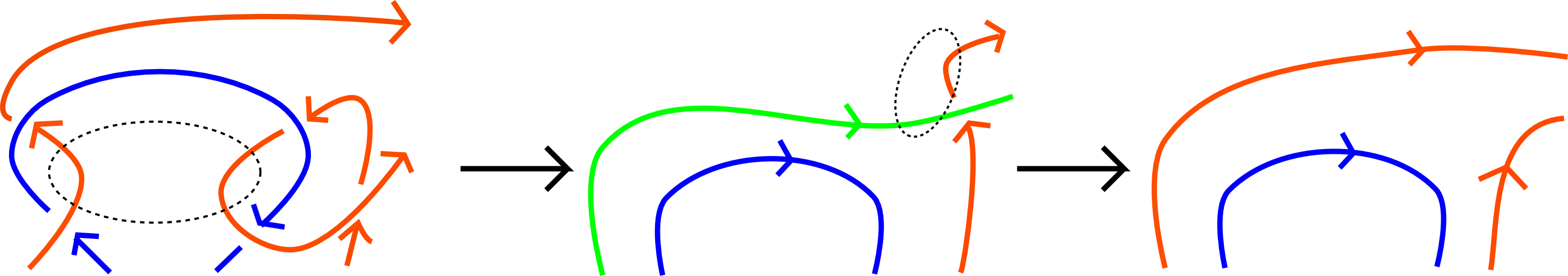}  
\end{figure}
The tangle on the right belongs to a pretzel link similar to $P_{2h+2,2h+2,2l+1}$, 
but with $h$ lowered by 1: we can use the inductive hypothesis.
       
We obtain the following cobordism.
\begin{figure}[H]
        \centering
        \includegraphics[width=0.45\textwidth]{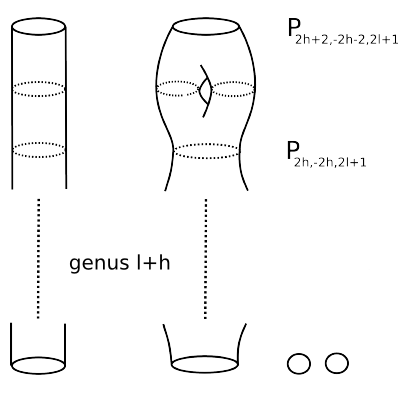}  
        \caption{}
\end{figure}
The genus of this surface is $l+h+1$ and so we conclude.

\bigskip

\end{document}